\declaretheorem[numberwithin=section]{theorem}
\declaretheorem[sibling=theorem]{proposition}
\declaretheorem[sibling=theorem]{definition}
\declaretheorem[sibling=theorem]{corollary}
\declaretheorem[sibling=theorem]{lemma}
\declaretheorem[sibling=theorem]{assumption}
\declaretheorem[sibling=theorem,style=remark]{remark}
\numberwithin{equation}{section}
\def\R{\mathbb R}
\def\C{\mathbb C}
\def\1{\mathbbm{1}}
\def\A{\mathscr{A}}
\newcommand{\tr}{\mathrm{tr}}
\long\def\symbolfootnote[#1]#2{\begingroup%
\def\thefootnote{\fnsymbol{footnote}}\footnote[#1]{#2}\endgroup}
\begin{document}

	\title[The Brown measure of variables with semicircular imaginary part]{The Brown measure of unbounded variables with free semicircular imaginary part}
	\author{
	Ching-Wei Ho}
	\address{Institute of Mathematics, Academia Sinica, Taipei 10617, Taiwan; Department of Mathematics, University of Notre Dame, Notre Dame,
IN 46556, United States}
	\email{cho2@nd.edu}
	\date{\today} 
	\keywords{free probability, Brown measure, random matrices, Brownian motion}
	\subjclass[2010]{46L54, 60B20}
	\maketitle

	\begin{abstract}		
		Let $x_0$ be an unbounded self-adjoint operator such that the Brown measure of $x_0$ exists in the sense of Haagerup and Schultz. Also let $\tilde\sigma_\alpha$ and $\sigma_\beta$ be semicircular variables with variances $\alpha\geq 0$ and $\beta>0$ respectively. Suppose $x_0$, $\sigma_\alpha$, and $\tilde\sigma_\beta$ are all freely independent. We compute the Brown measure of $x_0+\tilde\sigma_\alpha+i\sigma_\beta$, extending the recent work which assume $x_0$ is a bounded self-adjoint random variable.  We use the PDE method introduced by Driver, Hall and Kemp to compute the Brown measure. The computation of the PDE relies on a charaterization of the class of operators where the Brown measure exists. The Brown measure in this unbounded case has the same structure as in the bounded case; it has connections to the free convolution $x_0+\sigma_{\alpha+\beta}$. We also compute the example where $x_0$ is Cauchy-distributed. 
			\end{abstract}

		\tableofcontents
	
	\section{Introduction}
	An elliptic variable $c_{\alpha,\beta}$ is an element in a $W^*$-probability space $(\A,\tau)$ of the form $\tilde{\sigma}_\alpha+i\sigma_\beta$ where $\tilde\sigma_\alpha$ and $\sigma_\beta$ are free semicircular random variables with variances $\alpha> 0$ and $\beta>0$ respectively. In this paper, we also consider the case $\alpha = 0$; in this case, $c_{\alpha,\beta}$ is a ``degenerate'' elliptic variable, and is just a imaginary multiple of the semicircular variable. We do not consider the case $\alpha>0$ and $\beta=0$ in this paper. In this case, $c_{\alpha,\beta}$ is Hermitian --- a semicircular variable. The elliptic variable $c_{\alpha,\beta}$ is the limit in $\ast$-distribution of the Gaussian random matrix $\sqrt{\alpha}\tilde{X}_N+i\sqrt{\beta}X_N$ where $\tilde{X}_N$ and $X_N$ are independent Gaussian unitary ensembles (GUEs); see \cite{Voiculescu1991} and the book \cite{NicaSpeicherBook}. Also, the empirical eigenvalue distribution of the random matrix $\sqrt{\alpha}\tilde{X}_N+i\sqrt{\beta}X_N$ converges to the Brown measure of the elliptic variable $c_{\alpha,\beta}$ \cite{Girko1985} (See \cite{Brown1986,HaagerupSchultz2007} for the definition of Brown measure). The Brown measure of $c_{\alpha,\beta}$ has a simple structure; it is the uniform measure in a region bounded by a certain ellipse \cite{BianeLehner2001}.
	
	Let $x_0$ be a self-adjoint noncommutative random variable in the $W^*$-probability space $\A$ containing $c_{\alpha,\beta}$ that is freely independent from $c_{\alpha,\beta}$. In particular, $x_0$ is a \emph{bounded} random variable. The author computed the Brown measure of $x_0+c_{\alpha,\beta}$ in \cite{Ho2020}. In this paper, we extend the computation to an \emph{unbounded} self-adjoint random variable $x_0$ affiliated with $\A$ for which the Brown measure exists in the sense of Haagerup and Schultz \cite{HaagerupSchultz2007}; we refer this class of unbounded random variables as the Haagerup--Schultz class. Our results show that the Brown measure of $x_0+c_{\alpha,\beta}$ has the same structure as in the bounded case; in particuar, it has direct connections to the Brown measure of $x_0+c_{s}$, where $c_s= c_{s/2,s/2}$ is the circular variable with $s=\alpha+\beta$, and the law of $x_0+\sigma_{s}$. We also compute, as an example, the Brown measure of $x_0+c_{\alpha,\beta}$ for $\alpha\geq 0$ and $\beta>0$ when $x_0$ has a Cauchy distribution. 
	
	When $x_0$ is a bounded random variable and is the limit in distribution of a sequence $A_N$ of Hermitian random matrix independent from the GUEs $\tilde{X}_N$ and $X_N$, \'Sniady \cite{Sniady2002} shows that the empirical eigenvalue distribution of
	\[A_N+\sqrt{\alpha}\tilde{X}_N+i\sqrt{\beta}X_N,\quad \alpha>0, \beta>0\]
	converges to the Brown measure of $x_0+c_{\alpha,\beta}$, as $N\to\infty$. When $x_0$ is unbounded, computer simulations show that the Brown measure of $x_0+c_{\alpha,\beta}$ is a reasonable candidate of the limiting eigenvalue distribution of the random matrix model $A_N+\sqrt{\alpha}\tilde{X}_N+i\sqrt{\beta}X_N$, where the empirical eigenvalue distribution of $A_N$ converges to the law of $x_0$.
	
	The computation of the Brown measure in this paper is based on the PDE method introduced by Driver, Hall and Kemp in \cite{DHK2019}. In \cite{DHK2019}, Driver, Hall and Kemp computed the Brown measure of the free multiplicative Brownian motion using the Hamilton--Jacobi method. The author computes in later work with collaborators the Brown measures of other variables using the same PDE method \cite{HallHo2020, Ho2020, HoZhong2019}.
The results in these papers assume the boundedness of $x_0$ in the process of computing the PDE. The key observation in this paper is to show that the function
	\begin{equation}
	\label{eq:S}
	S(t,\lambda,\varepsilon) = \tau[\log((x_0+i\sigma_t-\lambda)^*(x_0+i\sigma_t-\lambda)+\varepsilon)], \quad t>0, \lambda\in\C, \varepsilon>0
	\end{equation}
	satisfies the same PDE derived in \cite{HallHo2020}	when $x_0$ is an unbounded random variable affiliated with $\A$ in the Haagerup--Schultz class. The solution of the PDE yields the Brown measure of $x_0+i\sigma_t$. By considering the free additive convolution $x_0+\tilde\sigma_{\alpha}$ in place of $x_0$, we derive the family of the Brown measures of $x_0+c_{\alpha,\beta}$ for $\alpha\geq0$ and $\beta>0$. In particular, by taking $\alpha =\beta>0$, we extend the results in \cite{HoZhong2019} to unbounded self-adjoint $x_0$.

	The method of deriving the PDE \eqref{eq:S} in \cite{HallHo2020} for the bounded $x_0$ case is to apply the free It\^o formula; the partial derivatives $\partial S/\partial\lambda$ and $\partial S/\partial\varepsilon$ of $S$ can also be computed easily using the formula given in \cite[Lemma 1.1]{Brown1986}. When $x_0$ is unbounded, the free It\^o formula does not apply; furthermore, since the spectrum of $(x_0+i\sigma_t-\lambda)^*(x_0+i\sigma_t-\lambda)+\varepsilon$ may be an unbounded set on the positive half-line, it takes extra work to verify that the partial derivatives of $S$ are given by the ``same'' formula as in the bounded $x_0$ case. To overcome these issues, in this paper, we use the characterization of $x_0 = AB^{-1}$ for some $A, B\in \A$ given in \cite[Lemma 2.4]{HaagerupSchultz2007} to write
	\[S(t,\lambda,\varepsilon) = \tau[\log(\vert x_0\vert^2+1)]+\tau[\log((\lambda B-C_t)^*(\lambda B-C_t)+B^2\varepsilon)]\]
	where $C_t = A+i\sigma_t B$. The point is that the above form of $S$ is written in terms of bounded operators in $\A$, except for the constant $\tau[\log(\vert x_0\vert^2+1)]$. And then we prove that the partial derivatives and the time derivative of $S$ are all well-approximated by the corresponding partial derivatives of the function
	\[T_\eta(t,\lambda,\varepsilon)=\tau[\log((\lambda B-C_t)^*(\lambda B-C_t)+B^2\varepsilon+\eta)].\]
	The extra regularization $\eta$ allows us to apply the free It\^o formula and the derivative formula \cite[Lemma 1.1]{Brown1986}.

	The paper is organized as follows. Section~\ref{sect:results} lists out all the Brown measures computed in this paper. These Brown measures are not listed in the order of being proved in this paper; they are listed according to the complexity of their structures. Section~\ref{sect:Bkground} contains some basic definitions and background from free probability theory and Brown measure. It also contains the PDE computed in \cite{HallHo2020}. Section~\ref{sect:DiffEq} contains the crucial theorem of this paper --- showing that we get the same PDE computed in \cite{HallHo2020} for the function $S$ defined in~\eqref{eq:S}, even when $x_0$ is unbounded. In Section~\ref{sect:HJ}, we analyze the PDE using the Hamilton--Jacobi method. The Hamilton's equation is solved in \cite{HallHo2020}; the solution of the Hamilton's equation is given without proof. In Section~\ref{sect:BrownSc}, we use the solution of the Hamilton's equation to analyze the logarithmic potential and compute the Brown measure of $x_0+i\sigma_t$. Since the law of $x_0$ now possibly has unbounded support, we provide a proof whenever the analysis given in \cite{HallHo2020} only works for bounded random variable $x_0$.  The Brown measure has full measure on an open set $\Omega_{x_0,t}$. In Section~\ref{sect:ellipse}, we replace the random variable $x_0$ in the results in Section~\ref{sect:BrownSc} by the free convolution $x_0+\tilde\sigma_\alpha$ to compute the Brown measure of $x_0+c_{\alpha,\beta}$. Finally, in Section~\ref{sect:example}, we compute the Brown measure of $x_0+c_{\alpha,\beta}$ when $x_0$ has a Cauchy distribution.
	
	\subsection{Statement of results\label{sect:results}}
	In this subsection, we list out the Brown measures computed in this paper according to the complexity of their structures. Let $(\A,\tau)$ be a $W^*$-probability space and $\tilde{\A}$ be the algebra of closed densely defined operators affiliated with $\A$. Let $x_0\in\tilde{\A}$ be self-adjoint satisfying the following standing assumption throughout the paper.
	\begin{assumption}
		\label{assump:standing}
		The law $\mu_{x_0}$ of $x_0$ is not a Dirac mass at a point on $\R$, and satisfies
	\begin{equation}
		\label{eq:logassump}
		\int_1^\infty\log \vert x\vert\,d\mu_{x_0}(x)<\infty.
	\end{equation}
	\end{assumption}
	The assumption \eqref{eq:logassump} means that $x_0\in\tilde{A}$ is in the Haagerup--Schultz class $\A^\Delta$; that is, the Brown measure of $x_0$ exists in the sense of Haagerup and Schultz (See Definition~\ref{def:ADeltaDef}). If $\mu_{x_0}$ is a Dirac mass, then the Brown measure of $x_0+c_{\alpha,\beta}$ is just the translation of the elliptic law in the case $\alpha>0$, or just the translation of the semicircular law on the imaginary axis in the case $\alpha= 0$. 
	
	We start by Brown measure of $x_0+c_t$, where $c_t = c_{t/2,t/2}$ is the circular variable, freely independent from $x_0$. Recall that $\sigma_t$ denotes a semicircular variable of variance $t$, freely independent from $x_0$. The following theorem is proved in Corollaries~\ref{cor:Circular} and~\ref{cor:ellipsePush}(2). When $x_0$ is bounded, this theorem is established in Theorems 3.13 and 3.14 in \cite{HoZhong2019}. 
	
	\begin{theorem}
		\label{thm:CircularIntro}
		The following statements about the Brown measure of $x_0+c_t$ hold.
		\begin{enumerate}
		\item The Brown measure of $x_0+c_t$ has full measure on an open set $\Lambda_{x_0,t}$ of the form
		\[\Lambda_{x_0,t} = \{\left.u+iv\in\C\right\vert |v|<v_{x_0,t}(u)\}\]
		for a certain function $v_{x_0,t}$ that appears in the study of the free convolution $x_0+\sigma_t$. Then the density of the Brown measure on $\Lambda_{x_0,t}$ has the special form
		\[w_t(u+iv) = \frac{1}{\pi t}\left(1-\frac{t}{2}\frac{d}{du}\int\frac{x\,d\mu_{x_0}(x)}{(u-x)^2+v_{x_0,t}(u)^2}\right)\]
		for all $u+iv\in\Lambda_{x_0,t}$; in particular, the density is strictly positive and constant along vertical segments in $\Lambda_{x_0,t}$.

		\item Consider the function
		\begin{equation}
		\label{eq:IntroPsi}
		\psi_{x_0,t}(u) = u+t\int\frac{(u-x)\,d\mu_{x_0}(x)}{(u-x)^2+v_{x_0,t}(u)^2},\quad u\in\R
		\end{equation}
		that appears in the study of the free convolution $x_0+\sigma_t$. Then the push-forward of the Brown measure of $x_0+c_t$ by the map $u+iv\mapsto \psi_{x_0,u}(u)$ is the law of $x_0+\sigma_t$.
		\end{enumerate}
		\end{theorem}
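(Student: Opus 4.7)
The plan is to obtain Theorem~\ref{thm:CircularIntro} as the $\alpha=\beta=t/2$ case of the Brown measure formula for $x_0+c_{\alpha,\beta}$ derived in Section~\ref{sect:ellipse}, which in turn is obtained by applying the Brown measure description of $y+i\sigma_s$ (from Section~\ref{sect:BrownSc}) with the substitution
\[y := x_0+\tilde\sigma_{t/2}, \qquad s := t/2.\]
Since $c_t = c_{t/2,t/2} = \tilde\sigma_{t/2}+i\sigma_{t/2}$ and $y$ is self-adjoint, affiliated with $\A$, and freely independent from $\sigma_{t/2}$, we have $x_0+c_t = y+i\sigma_{t/2}$. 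The substance of the theorem then lies in identifying the ``$y,s$''-versions of the region, the boundary function $v_{y,s}$, the push-forward function $\psi_{y,s}$, and the density with the ``$x_0,t$''-versions in the statement.

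The key driver of these identifications is the semicircular addition $\tilde\sigma_{t/2}+\sigma_{t/2}\overset{d}{=}\sigma_t$, which implies that $y+\sigma_{t/2}$ has the same distribution as $x_0+\sigma_t$. Since $v_{x_0,t}$ and $\psi_{x_0,t}$ are objects extracted from the free convolution $x_0\boxplus\sigma_t$, and $v_{y,t/2}$, $\psi_{y,t/2}$ are the corresponding objects extracted from $y\boxplus\sigma_{t/2}$, the two pairs must agree and in particular $\Omega_{y,t/2}=\Lambda_{x_0,t}$. To propagate this equality through the explicit density formula, I would use the subordination relation
\[G_{x_0\boxplus\sigma_{t/2}}(z)=G_{x_0}(\omega(z)), \qquad \omega(z)=z-\tfrac{t}{2}G_{x_0\boxplus\sigma_{t/2}}(z),\]
evaluated in the limit $z \to u+iv_{x_0,t}(u)$, to convert integrals against $\mu_y$ (the Cauchy transform of $y$ and its $x$-weighted variants) into integrals against $\mu_{x_0}$ with the correct $t$-dependent coefficients.

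For part (2) the push-forward statement from Section~\ref{sect:BrownSc} gives directly that the push-forward of the Brown measure of $y+i\sigma_{t/2}$ by $u+iv\mapsto \psi_{y,t/2}(u)$ is the law of $y+\sigma_{t/2}$; by the preceding identifications this is exactly the law of $x_0+\sigma_t$, and $\psi_{y,t/2}=\psi_{x_0,t}$, so part (2) is immediate.

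The main obstacle I expect is the density identification in part (1). The density produced in Section~\ref{sect:BrownSc} is a natural function of $\mu_y$, and rewriting it in the $\mu_{x_0}$-form of the theorem requires combining the subordination identity with differentiation under the integral in $u$. Assumption~\ref{assump:standing}, together with the strict positivity of $v_{x_0,t}$ inside $\Lambda_{x_0,t}$, should supply the dominated-convergence estimates needed to justify these manipulations in the possibly-unbounded setting; keeping careful track of the coefficient $t/2$ versus $t$ through the two subordination steps is the delicate bookkeeping I anticipate.
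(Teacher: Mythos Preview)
Your overall plan --- specialize Section~\ref{sect:ellipse} to $\alpha=\beta=t/2$ --- is exactly what the paper does in Corollaries~\ref{cor:Circular} and~\ref{cor:ellipsePush}(2). However, your ``key driver'' is wrong. You assert that $v_{y,t/2}=v_{x_0,t}$ and $\psi_{y,t/2}=\psi_{x_0,t}$ (with $y=x_0+\tilde\sigma_{t/2}$) because both pairs are ``extracted from'' the same convolution $y+\sigma_{t/2}\overset{d}{=}x_0+\sigma_t$. But $v_{a,s}$ and $\psi_{a,s}$ depend on the \emph{decomposition} $a\boxplus\sigma_s$, not merely on the resulting law. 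Take $x_0=0$: then $v_{x_0,t}(0)=\sqrt{t}$, while for $y=\tilde\sigma_{t/2}$ a direct computation with the semicircular Cauchy transform gives $v_{y,t/2}(0)=\sqrt{t}/2$. What \emph{is} true is $\varphi_{y,t/2}(u)=2\,v_{y,t/2}\bigl(f_{y,t/2}^{-1}(u)\bigr)=v_{x_0,t}(u)$, which does yield $\Omega_{y,t/2}=\Lambda_{x_0,t}$, but this requires precisely the subordination computation you relegate to a secondary role. Likewise, the push-forward map produced by Section~\ref{sect:BrownSc} for $y+i\sigma_{t/2}$ is $u+iv\mapsto 2f_{y,t/2}^{-1}(u)-u=\psi_{y,t/2}\bigl(f_{y,t/2}^{-1}(u)\bigr)$, not $\psi_{y,t/2}(u)$; identifying it with $\psi_{x_0,t}(u)$ again needs subordination, not the law-equality shortcut.

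The paper sidesteps all of this by never comparing $v_{y,t/2}$ with $v_{x_0,t}$ directly. The conversion from $\mu_y$ to $\mu_{x_0}$ is done once, for general $\alpha,\beta$, in Proposition~\ref{prop:Forellipse}; Theorem~\ref{thm:ellipse} is then already written in terms of $\mu_{x_0}$, $v_{x_0,s}$, and the auxiliary homeomorphism $f_{x_0,\alpha,\beta}(u_0)=\mathrm{Re}\,H_{x_0,\alpha-\beta}\bigl(u_0+iv_{x_0,s}(u_0)\bigr)$. Setting $\alpha=\beta=t/2$ makes $\alpha-\beta=0$, so $H_{x_0,0}$ is the identity and hence $f_{x_0,t/2,t/2}(u_0)=u_0$. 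The formulas of Theorem~\ref{thm:ellipse} and Corollary~\ref{cor:ellipsePush} then collapse in two lines to those of Theorem~\ref{thm:CircularIntro}; there is no delicate $t$-versus-$t/2$ bookkeeping left, because it has all been absorbed into Proposition~\ref{prop:Forellipse}.
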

	
	The Brown measure of $x_0+c_s$ ``generates'' the family of the Brown measures of $x_0+c_{\alpha,\beta}$ with $\alpha+\beta=s$ in the sense of push-forward measure. The following theorem is established in Theorem~\ref{thm:ellipse} and Corollary~\ref{cor:ellipsePush}. This result extends Theorems 3.7 and 4.1 in \cite{Ho2020} to possibly-unbounded $x_0$ affiliated with $\A$. 
	
	\begin{theorem}
		The following about the Brown measure of $x_0+c_{\alpha,\beta}$, for $\alpha\geq0$ and $\beta>0$ holds.
		\begin{enumerate}
			\item The Brown measure of $x_0+c_{\alpha,\beta}$ has full measure on an open set $\Omega_{x_0,t}$ of the form
			\begin{equation}
			\label{eq:OmegaIntro}
			\Omega_{x_0,\alpha,\beta} = \{\left.u+iv\in\C\right\vert |v|<\varphi_{x_0,\alpha,\beta}(u)\}
			\end{equation}
			for a certain function $\varphi_{x_0,\alpha,\beta}$. There exists a strictly increasing homeomorphism $f_{x_0,\alpha,\beta}$ on $\R$ such that, by writing $u=f_{x_0,\alpha,\beta}(u_0)$, the density of the Brown measure on $\Omega_{x_0,\alpha,\beta}$ has the special form
			\[w_{\alpha,\beta}(u+iv) = \frac{1}{4\pi \beta}\left(1+2\beta\frac{d}{du}\int\frac{(u_0-x)\,d\mu_{x_0}(x)}{(u_0-x)^2+v_{x_0,s}(u_0)^2}\right),\]
			for all $u+iv\in\Omega_{x_0,\alpha,\beta}$, where $s=\alpha+\beta$.	The density is strictly positive and constant along vertical segments in $\Omega_{x_0,\alpha,\beta}$. Furthermore, by writing $u=f_{x_0,\alpha,\beta}(u_0)$ again, the function $\varphi_{x_0,\alpha,\beta}$ in~\eqref{eq:OmegaIntro} is given by $\varphi_{x_0,\alpha,\beta}(u)=v_{x_0,s}(u_0)$ where $v_{x_0,s}$ is defined as in Theorem~\ref{thm:CircularIntro}(1)
			
			\item Write $s=\alpha+\beta$.	Let $U_{\alpha,\beta}:\overline{\Lambda}_{x_0,s}\to\overline{\Omega}_{x_0,\alpha,\beta}$ be defined by
			\begin{equation*}
			U_{\alpha,\beta}(u+iv) = f_{x_0,\alpha,\beta}(u)+i\frac{2\beta v}{s}.
			\end{equation*}
			Then $U_{\alpha,\beta}$ is a homeomorphism, and the push-forward of the Brown measure of $x_0+c_s$ under the map $U_{\alpha,\beta}$ is the Brown measure of $x_0+c_{\alpha,\beta}$.
			
			\item Let $s=\alpha=\beta$. Recall the function $\psi_{x_0,s}$ is defined in~\eqref{eq:IntroPsi}. The push-forward of the Brown measure of $x_0+c_{\alpha,\beta}$ by the map
				\[Q_{\alpha,\beta}(u+iv) = \begin{cases}
				\frac{1}{\alpha-\beta}[su-2\beta f_{x_0,\alpha,\beta}^{-1}(u)]\quad &\textrm{if $\alpha\neq \beta$}\\
				\psi_{x_0,s}(u)\quad &\textrm{if $\alpha= \beta=s/2$}.
				\end{cases}\]
				is the law of $x_0+\sigma_s$.
		\end{enumerate}
		\end{theorem}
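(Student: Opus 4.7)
The plan is to reduce the ellipse case to the pure-imaginary case treated in Section~\ref{sect:BrownSc} by absorbing the real semicircular part into $x_0$. Setting $y_0 = x_0 + \tilde\sigma_\alpha$, one obtains a self-adjoint element affiliated with $\A$, and
\[
x_0 + c_{\alpha,\beta} = y_0 + i\sigma_\beta.
\]
A preliminary step would be to verify that $y_0$ still satisfies Assumption~\ref{assump:standing}, which amounts to checking that the logarithmic tail bound is preserved under free convolution with the bounded variable $\tilde\sigma_\alpha$. Combined with the identification $y_0 + \sigma_\beta \stackrel{d}{=} x_0 + \sigma_s$ for $s = \alpha + \beta$ (since the free sum of two free semicircular variables is semicircular with summed variance), applying the results of Section~\ref{sect:BrownSc} to $y_0 + i\sigma_\beta$ yields the Brown measure of $x_0 + c_{\alpha,\beta}$ on a region with density expressible in terms of the subordination functions of $x_0 + \sigma_s$.

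For part (1), I would use the subordination relation for free convolution with a semicircular variable to produce a strictly increasing homeomorphism $f_{x_0,\alpha,\beta}$ of $\R$ connecting the real parameter $u_0$ naturally parametrizing $x_0 + \sigma_s$ to the real parameter $u$ naturally parametrizing $y_0 + \sigma_\beta$. Under this reparametrization $u = f_{x_0,\alpha,\beta}(u_0)$ one expects $v_{y_0,\beta}(u) = v_{x_0,s}(u_0)$, which gives the description of $\Omega_{x_0,\alpha,\beta}$; substituting into the density formula for the Brown measure of $y_0 + i\sigma_\beta$ from Section~\ref{sect:BrownSc} would then yield the density formula for $w_{\alpha,\beta}$.

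For part (2), once part (1) is in hand, both $\Omega_{x_0,\alpha,\beta}$ and $\Lambda_{x_0,s}$ have explicit descriptions in terms of $v_{x_0,s}$ and $f_{x_0,\alpha,\beta}$, so the map $U_{\alpha,\beta}(u+iv) = f_{x_0,\alpha,\beta}(u) + i(2\beta v/s)$ is immediately seen to be a homeomorphism of the closures. Its Jacobian equals $(2\beta/s)\,f_{x_0,\alpha,\beta}'(u)$, and a direct comparison of the density formulas in Theorem~\ref{thm:CircularIntro}(1) and in part (1) above should show that $w_s(u+iv) = w_{\alpha,\beta}(U_{\alpha,\beta}(u+iv))\cdot(2\beta/s)f_{x_0,\alpha,\beta}'(u)$, which is exactly the change-of-variables condition for the desired push-forward. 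For part (3), I would verify that $Q_{\alpha,\beta} \circ U_{\alpha,\beta}$ simplifies to $u+iv \mapsto \psi_{x_0,s}(u)$: this is immediate in the case $\alpha = \beta$ (where $f_{x_0,\alpha,\beta}$ is the identity and $2\beta/s = 1$), and in the case $\alpha \neq \beta$ it should follow from the algebraic identity $s f_{x_0,\alpha,\beta}(u) - 2\beta u = (\alpha - \beta)\psi_{x_0,s}(u)$ that naturally pins down $f_{x_0,\alpha,\beta}$. Combined with Theorem~\ref{thm:CircularIntro}(2), this would give the push-forward claim in (3).

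The hardest part should be part (1): constructing the reparametrization $f_{x_0,\alpha,\beta}$, showing it is a strictly increasing homeomorphism of $\R$, proving the global compatibility $v_{y_0,\beta}\circ f_{x_0,\alpha,\beta} = v_{x_0,s}$, and verifying that $y_0 = x_0 + \tilde\sigma_\alpha$ satisfies Assumption~\ref{assump:standing} in the unbounded setting. Once these are handled, parts (2) and (3) reduce to algebraic manipulations with the explicit densities.
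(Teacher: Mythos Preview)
Your proposal is correct and follows essentially the same route as the paper. The paper also sets $y_0=x_0+\tilde\sigma_\alpha$, applies Theorem~\ref{thm:a+ist} to $y_0+i\sigma_\beta$, and then invokes a reparametrization identity (Proposition~\ref{prop:Forellipse}, imported from \cite{Ho2020}) to rewrite $f_{y_0,\beta}^{-1}$ in terms of $\mu_{x_0}$ and $v_{x_0,s}$; the push-forward statements are handled exactly via the algebraic identity $sf_{x_0,\alpha,\beta}(u_0)-2\beta u_0=(\alpha-\beta)\psi_{x_0,s}(u_0)$ you wrote down (see the remark after Corollary~\ref{cor:ellipsePush}). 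The only difference is emphasis: the paper does not re-derive the subordination compatibility $v_{y_0,\beta}\!\circ\! f_{x_0,\alpha,\beta}=v_{x_0,s}$ or the monotonicity of $f_{x_0,\alpha,\beta}$ from scratch, but instead observes that the relevant arguments in \cite{Ho2020} are purely complex-analytic (carried out on Stolz angles) and therefore transfer verbatim to the unbounded setting; and the log-moment check for $y_0$ is absorbed into the fact that $\A^\Delta$ is an algebra (Theorem~\ref{thm:ADelta}(2)) together with boundedness of $\tilde\sigma_\alpha$.
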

	By putting $\alpha = 0$ and $\beta = t$, the above theorem recovers Theorem~\ref{thm:a+ist} in this paper which computes the Brown measure of $x_0+i\sigma_t$. 
	
	We close this section by a matrix simulation. Let $A_N$ be an $N\times N$ diagonal random matrix whose diagonal entries are independent Cauchy-distributed random variables; that is, each diagonal entry of $A_N$ is a random variable with density
	\[\frac{1}{\pi}\frac{1}{x^2+1},\quad x\in\R.\]
	Also, let $X_N$ be an $N\times N$ GUE, independent from $A_N$. Figure~\ref{fig:Cauchy} plots a computer simulation of the eigenvalues of $A_N+iX_N$ with $N=5000$ and the density of the Brown measure of $x_0+i\sigma_1$ where $x_0$ has the Cauchy distribution. This simulation suggests that the Brown measure is a reasonable candidate of the limiting eigenvalue distribution even in the unbounded case. 
	
	\begin{figure}[h]
		\begin{center}
			\includegraphics[width=\textwidth]{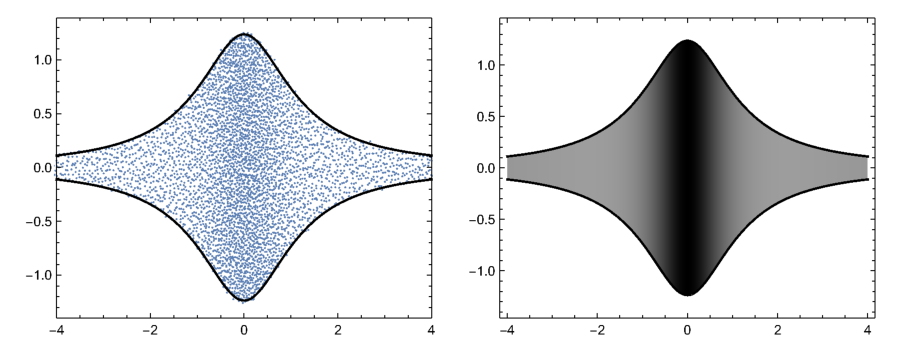}
		\end{center}
		\caption{$5000\times 5000$ matrix simulation of $A_{5000}+iX_{5000}$ and the Brown measure densty of $x_0+i\sigma_1$.\label{fig:Cauchy}}
	\end{figure}
	
	\section{Background and previous results\label{sect:Bkground}}
	\subsection{Free probability}
	We first introduce noncommutative random variables and freeness (or free independence).
	\begin{definition}
		\label{def:W*notation}
		\begin{enumerate}
			\item A {\bf $W^*$-probability space} $(\A,\tau)$ is a finite von Neumann algebra $\A$ with a normal, faithful, tracial state $\tau$. 
			\item The algebra $\tilde{\A}$ affiliated with $\A$ is the set of closed, densely defined operators such that every $a\in\A$ has a polar decomposition
			\[a = u|a| = u\int_0^\infty \lambda\,dE_{|a|}(\lambda)\]
			where $u\in\A$ is unitary and the projection-valued spectral measure $E_{|a|}$ takes values in $\A$. The elements in $\tilde{\A}$ are called (noncommutative) random variables. When we distinguish elements from $\tilde{\A}$ and $\A$, any $a\in\A$ is called a bounded random variable and any $a\in\tilde{\A}\setminus\A$ is called an unbounded random variable.
			\item Given any self-adjoint random variable $a\in\tilde{\A}$ with spectral measure $E_a$, the law $\mu_a$ of $a$ is the probability measure on $\R$ defined by
			\[\mu_a(B) = \tau(E_a(B))\]
			for any Borel set $B$. 
			\item The von Neumann subalgebras $\A_1,\ldots,\A_n\subset \A$ are said to be free, or freely independent, if given any $i_1,\ldots,i_m\in\{1,\ldots,n\}$ with $i_k\neq i_{k+1}$ and $a_{i_k}\in\A_{i_k}$ for all $k$ satisfying $\tau(a_{i_k}) = 0$ for all $1\leq k\leq m$, we have
			\[\tau(a_{i_1}\cdots a_{i_m}) = 0.\]
			The random variables $a_1,\ldots,a_m\in\tilde{\A}$ are free, or freely independent, if the von Neumann subalgebras $\A_k$ generated by all the spectral projections of $a_{k}$ are free.
			\end{enumerate}
		\end{definition}
	In this paper, we denote $\sigma_t$ or $\tilde\sigma_t$ by the semicircular variable with variance $t$; that is, the law $\mu_{\sigma_t}$ is given by
	\[\frac{1}{2\pi t}\sqrt{4t-x^2}\1_{|x|\leq 2\sqrt{t}}\,dx.\]
	
	One way to study the law of $a\in\tilde{\A}$ is to use the {\bf Cauchy transform} $G_a$ of $a$ defined by
	\[G_a(z) = \tau[(z-a)^{-1}] = \int\frac{d\mu_a(x)}{z-x},\quad z\in\C^+,\]
	where $\C^+$ is the upper half plane. The Cauchy transform is determined by the law $\mu_a$ of $a$. Given any finite measure $\mu$ on $\R$, one can define the Cauchy transform $G_\mu$ of $\mu$ by replacing $\mu_a$ in the right hand side of the above equation by $\mu$. We can recover the law $\mu_a$ from $G_a$ by the Stieltjes inversion formula 
	\[d\mu_a(x)=-\frac{1}{\pi}\lim_{\varepsilon\to 0^+}\mathrm{Im}\,G_a(x+i\varepsilon)\,dx\]
	where the limit is in weak sense.
	
	The Cauchy transform $G_a$ of $a$ is invertible on some truncated Stolz angle
	\[\Gamma_{\alpha,\beta}=\{\left.x+iy\in\C\right\vert y>\beta \textrm{ and } x<\alpha y\}\]
	for positive $\alpha$ and $\beta$. The inverse of $G_a$ is called the {\bf $K$-transform} of $a$ and is denoted by $K_a$. The {\bf $R$-transform} $R_a$ is defined as
	\[R_a(z) = K_a(z) - \frac{1}{z}.\]
	The following fact about $R$-transform was first discovered by Voiculescu \cite{Voiculescu1986} for bounded random variables, then by Maassen \cite{Maassen1992} for possibly unbounded random variables with finite variance, and by Bercovici and Voiculescu \cite{BercoviciVoiculescu1993} for arbitrary unbounded random variables.
	\begin{theorem}
		If $a,b\in\tilde{\A}$ are self-adjoint freely independent random variables, then, on the common domain where the $R$-transforms $R_a$, $R_b$, and $R_{a+b}$ are defined,
		\[R_{a+b} = R_a+R_b.\]
		\end{theorem}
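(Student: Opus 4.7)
The plan is to reduce to the bounded case, where the identity is a formal algebraic consequence of Speicher's theory of free cumulants, and then pass to the unbounded limit via spectral truncation and a compactness argument for the common domain of the $R$-transforms. In the bounded case, the proof I would give is the standard combinatorial one: define the free cumulants $\kappa_n$ by inverting the moment--cumulant relation over the lattice of non-crossing partitions, observe that freeness of $a$ and $b$ is equivalent to the vanishing of all mixed cumulants in $a$ and $b$, and conclude by multilinearity that $\kappa_n(a+b,\ldots,a+b)=\kappa_n(a,\ldots,a)+\kappa_n(b,\ldots,b)$. Since $R_a(z)=\sum_{n\geq 1}\kappa_n(a,\ldots,a)\,z^{n-1}$ is an analytic function on a neighborhood of $0$ (or equivalently of $\infty$ under $z\mapsto 1/z$) whenever $a$ is bounded, additivity of $R$-transforms follows.

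For the unbounded case, I would introduce the spectral truncations $a_N=\int_{-N}^{N}\lambda\,dE_a(\lambda)$ and $b_N=\int_{-N}^{N}\lambda\,dE_b(\lambda)$. Because freeness is formulated in terms of the von Neumann algebras generated by spectral projections, the variables $a_N$ and $b_N$ remain freely independent for every $N$, and the bounded case applies to give $R_{a_N+b_N}=R_{a_N}+R_{b_N}$. The laws $\mu_{a_N}$ converge weakly to $\mu_a$, so by dominated convergence $G_{a_N}\to G_a$ uniformly on compact subsets of $\mathbb{C}^+$, and likewise for $b_N$ and (using continuity of free additive convolution, or a direct resolvent argument via $(z-a_N-b_N)^{-1}$) also for $a_N+b_N$. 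Inverting via Bercovici--Voiculescu's theorem, one gets convergence of the corresponding $K$-transforms on a truncated Stolz angle $\Gamma_{\alpha,\beta}$ at $\infty$, and hence convergence of $R$-transforms on such an angle.

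The main obstacle, and the step that genuinely requires the unboundedness work of Bercovici--Voiculescu, is to show that there is a \emph{common} truncated Stolz angle on which $R_{a_N}$, $R_{b_N}$, $R_{a_N+b_N}$ are simultaneously defined for all sufficiently large $N$, and which is contained in the domain of the limiting $R$-transforms. The point is that, a priori, the domains of invertibility of $G_{a_N}$ could shrink as $N\to\infty$. The Bercovici--Voiculescu estimate says that $G_\mu$ admits an analytic right inverse on a Stolz angle whose aperture and height depend only on a uniform tightness control of $\mu$ near $\infty$; since $\{\mu_{a_N}\}_N$ is tight, this yields the required uniform Stolz angle. A Hurwitz/Vitali argument then lets me pass the identity $R_{a_N+b_N}=R_{a_N}+R_{b_N}$ to the limit and conclude $R_{a+b}=R_a+R_b$ on any domain on which all three transforms are defined.
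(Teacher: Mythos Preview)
The paper does not actually prove this theorem. It is stated in the background section as a known result, with attribution: Voiculescu \cite{Voiculescu1986} for bounded variables, Maassen \cite{Maassen1992} for variables with finite variance, and Bercovici--Voiculescu \cite{BercoviciVoiculescu1993} for arbitrary unbounded self-adjoint variables. No argument is given in the paper itself.

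Your proposal is therefore not to be compared against a proof in the paper, but it is worth noting that your sketch is essentially a reconstruction of the Bercovici--Voiculescu argument: approximate by compactly supported (bounded) variables, use the bounded case, and pass to the limit via their uniform Stolz-angle estimate for the inverse of the Cauchy transform. Two remarks on your write-up. First, when you invoke ``continuity of free additive convolution'' to obtain convergence of $G_{a_N+b_N}$, be aware that this weak continuity is itself a theorem in \cite{BercoviciVoiculescu1993}; it is proved there independently of $R$-transform additivity, so there is no circularity, but it is a nontrivial external input rather than a triviality. The alternative ``direct resolvent argument'' you mention would require showing $(z-a_N-b_N)^{-1}\to(z-a-b)^{-1}$ in an appropriate sense, which in turn needs $a_N+b_N\to a+b$ in the measure topology on $\tilde{\A}$; this is true but requires a short operator-algebraic justification. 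Second, the uniform Stolz angle you need is precisely the content of the Nevanlinna-type estimates in \cite{BercoviciVoiculescu1993}, so you are right to flag this as the genuinely unbounded step rather than something one can improvise.
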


	\subsection{Free addition convolution}
	When $x_0\in\A$ is a bounded self-adjoint random variable free from $\sigma_t$, the Brown measure of $x_0+i\sigma_t$ has direct connections to $x_0+\sigma_t$, by \cite{HallHo2020, HoZhong2019}. In this paper, we will show that this is also the case for unbounded self-adjoint random variable $x_0$. In this section, we review the free additive convolution of $x_0$ and $\sigma_t$, where $x_0$ can be taken to be unbounded.
	
	Biane \cite{Biane1997sc} computed the free additive convolution of a self-adjoint random variable $x_0\in\tilde{\A}$ and the semicircular variable $\sigma_t$. Before we state the results, we first introduce some notations.
	\begin{definition}
		\label{def:SCConv}
		Denote by $\mu_{x_0}$ the law of the self-adjoint random variable $x_0\in\tilde{\A}$.
		\begin{enumerate}
			\item Let $H_{x_0,t}$ be defined by
			\begin{equation}
				\label{eq:Htdef}
				H_{x_0,t}(z) = z + t\int\frac{d\mu_{x_0}(x)}{z-x},\quad z\not\in\mathrm{supp}(\mu_{x_0})
				\end{equation}
			where $\mathrm{supp}(\mu_{x_0})$ denotes the support of $\mu_{x_0}$.
			\item Define a nonnegative function on $\R$ by
			\[v_{x_0,t}(u) = \inf\left\{v > 0\left\vert \int\frac{d\mu_{x_0}(x)}{(u-x)^2+v^2}\leq\frac{1}{t}\right.\right\}.\]
			This means, for a given $u\in\R$, $v_{x_0,t}(u)$ is the unique positive number (if exists) such that $\int\frac{d\mu_{x_0}(x)}{(u-x)^2+v_{x_0,t}(u)^2}=\frac{1}{t}$; if such a number does not exist, $v_{x_0,t}(u) = 0$.
			\item Let $\Gamma_{x_0,t} = \{u+iv\in\C\left\vert v>v_{x_0,t}(u)\right.\}$ be the region in the upper half plane $\C^+$ that is above the graph of $v_{x_0,t}$.
			\end{enumerate}
		\end{definition}
	
	Recall that $G_a$ denotes the Cauchy transform of $a\in\tilde{\A}$. The following theorem, due to Biane \cite{Biane1997sc}, computes the free additive convolution of a self-adjoint random variable $x\in\tilde{\A}$ and the semicircular variable $\sigma_t$.
	\begin{theorem}[\cite{Biane1997sc}]
		\label{thm:Biane}
		\begin{enumerate}
			\item The function $H_{x_0,t}$ is an injective conformal map from $\Gamma_{x_0,t}$ onto the upper half plane $\C^+$. The function extends to a homeomorphism from $\overline{\Gamma}_{x_0,t}$ onto $\C^+\cup\R$. Hence, for any $u\in\R$, $H_{x_0,t}(u+iv_{x_0,t}(u))$ is real.
			\item The function $H_{x_0,t}$ satisfies
			\begin{equation}
				\label{eq:scsubord}
				G_{x_0+\sigma_t}(H_{x_0,t}(z)) = G_{x_0}(z).
				\end{equation}
			\item The law of $x_0+\sigma_t$ is absolutely continuous with respect to the Lebesgue measure; its density $p_{x_0,t}$ can be computed using the function $\psi_{x_0,t}(u) = H_{x_0,t}(u+iv_{x_0,t}(u))$. The function $\psi_{x_0,t}:\R\to\R$ is a homeomorphism, and
			\[p_{x_0,t}(\psi_{x_0,t}(u)) = \frac{v_{x_0,t}(u)}{\pi t}.\]
			\end{enumerate}
		\end{theorem}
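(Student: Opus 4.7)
The plan is to exploit the fact that the $R$-transform of $\sigma_t$ equals $tz$, which is well known from the free cumulants of the semicircular law. By the $R$-transform theorem already stated in the excerpt, $R_{x_0+\sigma_t}(z)=R_{x_0}(z)+tz$, equivalently $K_{x_0+\sigma_t}(w)=K_{x_0}(w)+tw$ on the common Stolz-angle domain. I would first establish the subordination identity (2) and then deduce (1) and (3) from it combined with an explicit computation of $\mathrm{Im}\,H_{x_0,t}$.

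For (2): starting on a truncated Stolz angle, substitute $w=G_{x_0}(z)$ into $K_{x_0+\sigma_t}(G_{x_0+\sigma_t}(\cdot))=\mathrm{id}$ to obtain $G_{x_0+\sigma_t}^{-1}(G_{x_0}(z))=z+tG_{x_0}(z)=H_{x_0,t}(z)$, whence $G_{x_0+\sigma_t}(H_{x_0,t}(z))=G_{x_0}(z)$. Both sides are meromorphic on $\C\setminus\mathrm{supp}(\mu_{x_0})$ and agree near $\infty$, so analytic continuation extends the identity to all of $\Gamma_{x_0,t}$, which is connected and contains a neighborhood of infinity.

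For (1): a direct calculation gives
\begin{equation*}
\mathrm{Im}\,H_{x_0,t}(u+iv)=v\left(1-t\int\frac{d\mu_{x_0}(x)}{(u-x)^2+v^2}\right),
\end{equation*}
and since the integrand is strictly decreasing in $v>0$, the sign flips exactly at $v=v_{x_0,t}(u)$. This gives $H_{x_0,t}(\Gamma_{x_0,t})\subset\C^+$ and $\mathrm{Im}\,H_{x_0,t}=0$ on the graph of $v_{x_0,t}$. Injectivity on $\Gamma_{x_0,t}$ follows by feeding the identity in (2) into the injectivity of $G_{x_0}:\C^+\to\C^-$; surjectivity onto $\C^+$ I would obtain by an argument-principle/proper-map computation, using the asymptotics $H_{x_0,t}(z)\sim z$ as $|z|\to\infty$ to control behavior at infinity. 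Continuous extension to $\overline\Gamma_{x_0,t}$ is then Carathéodory's theorem once one verifies that the boundary (closed up at $\infty$ in $\widehat\C$) is a Jordan curve, using monotonicity and continuity of $v_{x_0,t}$.

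For (3): by Stieltjes inversion $p_{x_0,t}(y)=-\pi^{-1}\lim_{\varepsilon\to 0^+}\mathrm{Im}\,G_{x_0+\sigma_t}(y+i\varepsilon)$. Setting $y=\psi_{x_0,t}(u)$ and using the boundary homeomorphism from (1) together with (2) yields $G_{x_0+\sigma_t}(\psi_{x_0,t}(u)+i0)=G_{x_0}(u+iv_{x_0,t}(u))$, and a direct computation of the imaginary part gives
\begin{equation*}
\mathrm{Im}\,G_{x_0}(u+iv_{x_0,t}(u))=-v_{x_0,t}(u)\int\frac{d\mu_{x_0}(x)}{(u-x)^2+v_{x_0,t}(u)^2}=-\frac{v_{x_0,t}(u)}{t}
\end{equation*}
by the defining equation of $v_{x_0,t}(u)$, which is exactly the claimed density formula. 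The main obstacle I anticipate is the global bijectivity and boundary extension of $H_{x_0,t}$ in (1): the imaginary-part formula trivially identifies where the sign changes, but turning that into a Jordan-curve picture of $\partial\Gamma_{x_0,t}$ and surjectivity onto $\C^+$ requires careful handling of the asymptotics of the Cauchy-type integral near $\mathrm{supp}(\mu_{x_0})$ and at infinity, which is where the unboundedness of $x_0$ (only the one-sided log-moment of Assumption~\ref{assump:standing} is available) most complicates matters.
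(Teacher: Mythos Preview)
The paper does not give its own proof of this theorem: it is stated as a result of Biane and simply cited to \cite{Biane1997sc}. So there is no in-paper proof to compare against; your sketch is in fact close to Biane's original argument.

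A couple of comments on your outline. First, your injectivity step in (1) is misstated: $G_{x_0}:\C^+\to\C^-$ is \emph{not} globally injective for a general probability measure, so you cannot simply ``feed (2) into the injectivity of $G_{x_0}$.'' The correct (and very short) argument is the one implicit in your setup: if $H_{x_0,t}(z_1)=H_{x_0,t}(z_2)$, then by (2) one gets $G_{x_0}(z_1)=G_{x_0}(z_2)$, and subtracting these two equalities in $H_{x_0,t}(z)=z+tG_{x_0}(z)$ yields $z_1=z_2$. Second, invoking Carath\'eodory requires knowing that the graph of $v_{x_0,t}$ (closed at $\infty$) is a Jordan curve; continuity of $v_{x_0,t}$ is not automatic and is one of the things Biane actually proves, and one must also handle the points where $v_{x_0,t}(u)=0$ (possibly in $\mathrm{supp}\,\mu_{x_0}$), where the Cauchy integral need not extend continuously. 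Finally, your closing worry about Assumption~\ref{assump:standing} is misplaced here: Biane's theorem holds for an arbitrary probability measure on $\R$ and does not use the log-moment hypothesis; that assumption in the paper is needed only for the Brown-measure constructions, not for the semicircular free convolution.
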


	When $a$ and $b$ are arbitrary self-adjoint random variables, the free additive convolution $a+b$ can be studied using Cauchy transforms in a way similar to \eqref{eq:scsubord} (the method of subordination functions). The subordination functions are first discovered by Voiculescu \cite{Voiculescu1993} for bounded random variables $a$ and $b$. Then it is further developed by Biane \cite{Biane1998}, and again by Voiculescu \cite{Voiculescu2000} to a even more general setting. Belinschi and Bercovici \cite{BelinschiBercovici2007} use a complex analysis approach to show the existence of subordination functions (where they do not assume the random variables to be bounded).
	
	\subsection{The Brown measure}
	\label{sect:BrownDef}
	In this section, we review the definition of the Brown measure, first introduced by Brown \cite{Brown1986} for operators in a von Neumann algebra with a faithful, semifinite, normal trace, then extended to a class $\A^\Delta$ of unbounded operators by Haagerup and Schultz \cite{HaagerupSchultz2007}. We refer this class of unbounded operators as the Haagerup--Schultz class. In the following, we introduce the class $\A^\Delta$. 
	\begin{definition}
		\label{def:ADeltaDef}
		We denote by $\A^\Delta$ the set of operators $a$ in $\tilde{\A}$ (see Definition~\ref{def:W*notation}(2)) such that
		\[\tau(\log^+|a|) = \int_0^\infty \log^+(\lambda)\,d\mu_{|a|}(\lambda)\]
		where $\log^+$ is defined to be $\log^+(x) = \max(\log x, 0)$ for $x\geq 0$. 
		\end{definition}
	
	Let $a\in\A^{\Delta}$. Define a function $S$ by
	\[S(\lambda,\varepsilon) = \tau[\log(|a-\lambda|^2+\varepsilon)], \lambda\in\C, \varepsilon>0.\] 
	Then
	\[s(\lambda) = \lim_{\varepsilon\to 0^+} S(\lambda,\varepsilon), \quad \lambda\in\C\]
	is a subharmonic function, with value in $\R\cup\{-\infty\}$. The exponential of $s(\lambda)/2$ is the Fuglede-Kadison determinant of $a-\lambda$. The Fuglede-Kadison determinant for operators in a finite factor was introduced by Fuglede and Kadison \cite{FugledeKadison1952}. The {\bf Brown measure} of $a$ is defined to be
	\begin{equation*}
		\mathrm{Brown}(a) = \frac{1}{4\pi}\Delta s(\lambda)
		\end{equation*}
	where the Laplacian is in distributional sense.
	
	The first statement of the following theorem characterizes the operators in $\A^\Delta$, and is proved in Lemma 2.4 of \cite{HaagerupSchultz2007}. The second statement is an important property of $\A^\Delta$, and is proved in Propositions 2.5 and 2.6 of \cite{HaagerupSchultz2007}. 
		\begin{theorem}[\cite{HaagerupSchultz2007}]
			\label{thm:ADelta}
			\begin{enumerate}
		\item Let $a\in\tilde{\A}$. Then $a\in\A^\Delta$ is equivalent to any of the following.
		\begin{enumerate}
			\item There exist $x_1\in\A$ and $y_1\in\A$ such that $a = x_1y_1^{-1}$ and
		\[\tau[\log|a|] = \tau[\log|x_1|]-\tau[\log|y_1|].\]
			\item There exist $x_2\in\A$ and $y_2\in\A$ such that $a = x_2^{-1}y_2$ and
			\[\tau[\log|a|] = \tau[\log|y_2|]-\tau[\log|x_2|]\]
	\end{enumerate}
		\item The set $\A^\Delta$ is an algebra; that is, it is a subspace in $\tilde{\A}$ and is closed under multiplication. 
		\end{enumerate}
	\end{theorem}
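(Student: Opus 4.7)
Plan. Both parts reduce to combining the polar decomposition and bounded Borel functional calculus with the multiplicativity of the Fuglede--Kadison determinant for bounded operators. I would organize the argument as: prove $\A^\Delta\Leftrightarrow$ (a) directly, obtain the equivalence with (b) by an adjoint argument, and then derive part (2) by constructing common bounded denominators.

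For the forward implication $\A^\Delta\Rightarrow$ (a) in part (1), take the polar decomposition $a=u|a|$ and set, via bounded Borel functional calculus,
$$y_1=(1+|a|)^{-1}\in\A,\qquad x_1=u\,|a|(1+|a|)^{-1}\in\A.$$
Then $x_1y_1^{-1}=u|a|=a$ on the dense domain of $1+|a|$. Spectral calculus yields $|x_1|=|a|(1+|a|)^{-1}$ and $|y_1|=(1+|a|)^{-1}$, so integrating the pointwise identity $\log\lambda=\log\bigl(\lambda/(1+\lambda)\bigr)-\log\bigl(1/(1+\lambda)\bigr)$ against $\mu_{|a|}$ produces the trace identity; the cancellation of the $\log(1+|a|)$ terms is legitimate because $\tau[\log(1+|a|)]\leq\log 2+\tau[\log^+|a|]<\infty$ by the $\A^\Delta$ hypothesis. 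Conversely, given (a) with $x_1,y_1\in\A$ bounded, $\tau[\log^+|x_1|]\leq\log^+\|x_1\|$ and $\tau[\log^+|y_1|]\leq\log^+\|y_1\|$ are automatically finite, and the trace identity then forces $\tau[\log^+|a|]<\infty$. The equivalence with (b) follows by adjoints: $|a|$ and $|a^*|$ share the same spectral distribution under $\tau$ in a finite von Neumann algebra, so $a\in\A^\Delta\iff a^*\in\A^\Delta$, and form (a) applied to $a^*=y_1^{-*}x_1^*$ reads as form (b) for $a$ upon setting $x_2=y_1^*$, $y_2=x_1^*$.

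For part (2), given $a,b\in\A^\Delta$, I would introduce the common bounded self-adjoint invertible right-denominator $z=(1+a^*a+b^*b)^{-1/2}\in\A$; since $a^*a\leq z^{-2}$ and operator monotonicity of $X\mapsto zXz$ gives $(az)^*(az)\leq zz^{-2}z=1$, we have $az\in\A$, and similarly $bz\in\A$. Hence
$$a+b=(az+bz)z^{-1}$$
is a decomposition of form (a). Closure under multiplication is parallel: combining form (a) for $a$ and form (b) for $b$ yields $ab=x_1(y_2y_1)^{-1}x_2$, which can be rearranged into form (a) by another common-denominator construction. The \emph{main obstacle} is verifying the trace identities attached to these new decompositions, which amounts to extending the Fuglede--Kadison multiplicativity $\tau[\log|\alpha\beta|]=\tau[\log|\alpha|]+\tau[\log|\beta|]$ from the bounded setting of \cite{FugledeKadison1952} to the mixed bounded/unbounded regime. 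I would handle this by spectral truncation: replace $|a|$ by its cutoff $\min(|a|,n)$, apply the bounded version of multiplicativity on each level, and pass to $n\to\infty$ using monotone convergence for the $\log^-$ part together with the $\A^\Delta$ integrability of $\log^+|a|$ to control the upper tail.
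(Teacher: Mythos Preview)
The paper does not prove this theorem at all: immediately before the statement it says that part (1) is Lemma~2.4 and part (2) is Propositions~2.5 and~2.6 of \cite{HaagerupSchultz2007}, and no argument is given in the paper itself. So there is nothing here to compare your proposal against line by line.

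That said, your outline is broadly in the spirit of the Haagerup--Schultz arguments. For part~(1), the construction $y_1=(1+|a|)^{-1}$, $x_1=u|a|(1+|a|)^{-1}$ is exactly the standard one, and your justification of the cancellation via $\tau[\log(1+|a|)]\le\log 2+\tau[\log^+|a|]<\infty$ is correct. Your converse is a bit quick: you should spell out why the trace identity, as an equality of \emph{extended} reals, forces $\tau[\log^+|a|]<\infty$; this uses that $\tau[\log|x_1|]$ and $\tau[\log|y_1|]$ each lie in $[-\infty,\,\log^+\|\cdot\|]$ and that the hypothesis implicitly rules out the indeterminate $-\infty-(-\infty)$ case.

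For part~(2), your common-denominator trick $z=(1+a^*a+b^*b)^{-1/2}$ with $az,bz\in\A$ is the right move, and you are honest that the remaining work is checking the trace identity for the new decomposition. One comment: rather than going through condition (a) and Fuglede--Kadison multiplicativity, it is cleaner to argue directly that $\tau[\log^+|a+b|]<\infty$. From $|a+b|^2=z^{-1}(az+bz)^*(az+bz)z^{-1}\le 4z^{-2}$ and trace-monotonicity of $\log^+$ (valid under $\tau$ even though $\log^+$ is not operator monotone), one reduces to bounding $\tau[\log^+(1+a^*a+b^*b)]$, which one controls from $\tau[\log^+|a|],\tau[\log^+|b|]<\infty$ by a singular-value/majorization argument. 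This avoids the truncation-and-limit step you flag as the main obstacle. Closure under products is handled analogously.
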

	\begin{remark}
		\label{rem:Brownelig}
		An immediate consequence from Theorem~\ref{thm:ADelta} is that if $x_0\in\A^\Delta$, then $x_0+c_{\alpha,\beta}\in\A^\Delta$, for all $\alpha\geq 0$ and $\beta>0$, so that the Brown measure of $x_0+c_{\alpha,\beta}$ is defined in the sense of Haagerup and Schultz.
		\end{remark}

		The following proposition is \cite[Proposition 2.5]{HaagerupSchultz2007}.
		\begin{proposition}[\cite{HaagerupSchultz2007}]
			\label{prop:FKMult}
			Given any $x, y\in\A^\Delta$, 
			\[\tau[\log \vert xy\vert] = \tau[\log\vert x\vert]+\tau[\log\vert y\vert].\]
		\end{proposition}

	\subsubsection{A PDE for the Brown measure of $x_0+i\sigma_t$ with bounded $x_0$}
	In \cite{HallHo2020}, Hall and the author computed the Brown measure of $x_t = x_0+i\sigma_t$ where $x_0\in\A$ is a (bounded) self-adjoint random variable with law $\mu_{x_0}$, and $\sigma_t\in\A$ is a semicircular variable, freely independent from $x_0$. Define
	\[S(t, \lambda,\varepsilon) = \tau[\log((x_t-\lambda)^*(x_t-\lambda)+\varepsilon)], \quad t\geq 0, \lambda\in\C, \varepsilon>0.\]
	Then the function $S$ satisfies the following PDE of Hamilton--Jacobi type.
	\begin{theorem}
		\label{thm:PDE}
		Write $\lambda = u+iv$. Then $S$ satisfies the PDE
		\[\frac{\partial S}{\partial t} = \varepsilon \left(\frac{\partial S}{\partial \varepsilon}\right)^2+\frac{1}{4}\left(\left(\frac{\partial S}{\partial u}\right)^2-\left(\frac{\partial S}{\partial v}\right)^2\right)\]
		with initial condition
		\[S(0,\lambda,\varepsilon) = \int\log((x-\lambda)^*(x-\lambda)+\varepsilon)\,d\mu_{x_0}(x).\]
		\end{theorem}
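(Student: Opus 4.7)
The plan is to derive the evolution of $S$ by realising $\sigma_t$ as the time-$t$ value of a free additive Brownian motion $B_t$ (freely independent from $x_0$) and applying the Biane--Speicher free Itô calculus. Setting $a_t := x_t-\lambda$, so that $da_t = i\,dB_t$ and $da_t^{*} = -i\,dB_t$, the free product rule $(dB_t)\,X\,(dB_t) = \tau(X)\,dt$ for bi-processes applied with $X = I$ gives, for $A_t := a_t^{*}a_t + \varepsilon$,
\begin{equation*}
dA_t \;=\; -i\,dB_t\,a_t \;+\; i\,a_t^{*}\,dB_t \;+\; dt.
\end{equation*}

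Next, I would apply the non-commutative Itô formula for the functional $X \mapsto \tau[\log X]$ on positive self-adjoint processes. With $R_t := A_t^{-1}$, a first/second-order expansion (via $\log(X+H) = \log X + \int_0^1 (X+sH)^{-1}H\,ds$) yields
\begin{equation*}
dS(t,\lambda,\varepsilon) \;=\; \tau[R_t\,dA_t] \;-\; \tfrac{1}{2}\,\tau[R_t\,(dA_t)\,R_t\,(dA_t)].
\end{equation*}
The first term reduces to $\tau(R_t)\,dt$ once the centred differentials $\tau[R_t\,dB_t\,a_t]$ and $\tau[R_t\,a_t^{*}\,dB_t]$ are dropped. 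For the quadratic term, expanding and repeatedly using cyclicity of $\tau$ together with $(dB_t)\,Y\,(dB_t) = \tau(Y)\,dt$ produces
\begin{equation*}
\tau[R_t\,(dA_t)\,R_t\,(dA_t)] \;=\; \bigl[\,-(\tau(a_tR_t))^2 + 2\,\tau(R_t)\,\tau(a_tR_ta_t^{*}) - (\tau(R_ta_t^{*}))^2\,\bigr]\,dt.
\end{equation*}

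The final step is a dictionary. Direct differentiation of $S$ in $\varepsilon$, $u$, $v$ gives
\begin{equation*}
S_\varepsilon = \tau(R_t), \qquad S_u = -\tau(a_tR_t)-\tau(R_ta_t^{*}), \qquad S_v = i\bigl[\tau(a_tR_t)-\tau(R_ta_t^{*})\bigr],
\end{equation*}
so by the identity $(x+y)^2 + (x-y)^2 = 2x^2+2y^2$ one obtains $\tfrac{1}{4}(S_u^2 - S_v^2) = \tfrac{1}{2}(\tau(a_tR_t))^2 + \tfrac{1}{2}(\tau(R_ta_t^{*}))^2$. The algebraic identity $a_t^{*}a_t\,R_t = I - \varepsilon R_t$ further gives $\tau(a_tR_ta_t^{*}) = 1-\varepsilon\,\tau(R_t)$, whence $\tau(R_t) - \tau(R_t)\tau(a_tR_ta_t^{*}) = \varepsilon\,\tau(R_t)^2 = \varepsilon\,S_\varepsilon^2$. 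Assembling these pieces produces exactly the stated PDE, and the initial condition at $t=0$ is immediate because $\sigma_0 = 0$.

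The principal technical obstacle is justifying the non-commutative Itô formula at the level of $\tau[\log(\cdot)]$: since $R_t$ does not commute with $dA_t$ one cannot simply cite scalar Itô termwise, but must appeal to the Biane--Speicher theory of free stochastic integrals of bi-processes. An equivalent route, perhaps cleaner to execute by hand, is to identify $\sigma_t$ with $\sqrt{t}\,\sigma_1$ for a fixed $\sigma_1\in\A$, differentiate in $t$ directly, and evaluate the resulting traces via the Schwinger--Dyson identity $\tau[\sigma_1 F] = (\tau\otimes\tau)[\partial_{\sigma_1}F]$ for semicircular elements; the boundedness of $x_0$ guarantees that operator norms stay controlled along the evolution, so these manipulations are rigorous.
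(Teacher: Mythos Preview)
Your proposal is correct and follows the approach the paper explicitly attributes to the cited reference: Theorem~\ref{thm:PDE} is quoted here as a background result from Hall--Ho for \emph{bounded} $x_0$, with the remark that the PDE is obtained via the Biane--Speicher free It\^o formula, which is exactly your method, and your algebra (the quadratic-variation bookkeeping and the dictionary $S_\varepsilon=\tau(R_t)$, $S_u,S_v$ in terms of $\tau(a_tR_t),\tau(R_ta_t^{*})$, together with $\tau(a_tR_ta_t^{*})=1-\varepsilon\tau(R_t)$) checks out line by line. The paper itself does not reprove this statement; its own contribution (Theorem~\ref{thm:UnboundedPDE}) is the extension to unbounded $x_0$, carried out by approximating $x_0$ with its spectral truncations $x_0p_n$ and passing to the limit via the noncommutative Egoroff theorem rather than by It\^o calculus.
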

	Hall and the author \cite{HallHo2020} analyze the function $S$ using the above PDE to compute the Brown measure of $x_t$. The key observation in this paper is to extend Theorem~\ref{thm:PDE} to self-adjoint unbounded $x_0\in\A^\Delta$. The solution of this PDE yields the Brown measure of $x_0+i\sigma_t$. The proof from some of the other results in \cite{HallHo2020} will also need to modified for $\mu_{x_0}$ with unbounded support on $\R$. We do not list all the results of \cite{HallHo2020} here; rather, we will state the results in the process of computing the Brown measure, with modified proof whenever necessary.

\section{The PDE for $S$\label{sect:DiffEq}}
\subsection{Outline of the unbounded case}
Let $x_0\in \A^\Delta$ (See Definition~\ref{def:ADeltaDef}) be self-adjoint and write $x_t = x_0+i\sigma_t$ where $\sigma_t$ is a semicircular variable, freely independent from $x_0$. By Remark~\ref{rem:Brownelig}, the Brown measure of $x_0+i\sigma_t$ exists in the sense of Haagerup and Schultz \cite{HaagerupSchultz2007}. 

Define
\[S(t,\lambda,\varepsilon)=\tau[\log((x_t-\lambda)^*(x_t-\lambda)+\varepsilon)].\]
The PDE in Theorem~\ref{thm:PDE} was proved in \cite{HallHo2020} under the assumption that $x_0$ is a bounded self-adjoint random variable in $\A$. In Section~\ref{sect:PDEproof}, we will show that $S$ satisfies the same PDE as in the bounded $x_0$ case (Theorem~\ref{thm:PDE}).
\begin{theorem}
	\label{thm:UnboundedPDE}
	Write $\lambda = u+iv$. Then $S$ satisfies the PDE
	\[\frac{\partial S}{\partial t} = \varepsilon \left(\frac{\partial S}{\partial \varepsilon}\right)^2+\frac{1}{4}\left(\left(\frac{\partial S}{\partial u}\right)^2-\left(\frac{\partial S}{\partial v}\right)^2\right)\]
	with initial condition
	\[S(0,\lambda,\varepsilon) = \int\log((x-\lambda)^*(x-\lambda)+\varepsilon)\,d\mu_{x_0}(x).\]
\end{theorem}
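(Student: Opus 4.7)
The plan is to approximate $x_0$ by its spectral truncations, apply the known bounded PDE of Theorem~\ref{thm:PDE} to each truncation, and pass to the limit using the noncommutative integration theory of Section~\ref{sect:Bkground}. Specifically, I would set $x_0^{(n)} = \int_{-n}^{n}\lambda\, dE_{x_0}(\lambda)\in\A$, a bounded self-adjoint operator lying in the $W^*$-subalgebra generated by the spectral projections of $x_0$. Freeness of $x_0^{(n)}$ from $\sigma_t$ is then inherited from the assumed freeness of $x_0$ and $\sigma_t$, so writing
\[S_n(t,\lambda,\varepsilon) = \tau\bigl[\log\bigl((x_0^{(n)} + i\sigma_t - \lambda)^*(x_0^{(n)} + i\sigma_t - \lambda) + \varepsilon\bigr)\bigr],\]
Theorem~\ref{thm:PDE} immediately gives the same PDE for $S_n$ at every $n\in\N$.

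The bulk of the work is then to establish, at every fixed $(t,\lambda,\varepsilon)$, the pointwise convergence $S_n\to S$ and of the partial derivatives of $S_n$ to those of $S$. The derivatives in $\varepsilon$, $u$, $v$ can be represented as traces of $\A$-bounded operators --- for instance $\partial S/\partial\varepsilon = \tau[(|x_t-\lambda|^2+\varepsilon)^{-1}]$, whose integrand has operator norm at most $1/\varepsilon$, while $\partial S/\partial u$ and $\partial S/\partial v$ involve operators of the form $(|x_t-\lambda|^2+\varepsilon)^{-1}(x_t-\lambda)$ bounded by $1/(2\sqrt{\varepsilon})$ via the polar decomposition of $x_t-\lambda$ together with spectral calculus on the nonnegative operator $|x_t-\lambda|^2$. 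Since $x_0^{(n)}\to x_0$ in the strong operator topology on a common core, the relevant resolvent operators built from $x_t^{(n)}$ converge strongly to those built from $x_t$. Applying the noncommutative Egoroff theorem (Theorem~\ref{thm:Egoroff}) yields, for each $\delta>0$, a projection $q_\delta$ with $\tau(1-q_\delta)<\delta$ on which a subsequence converges in operator norm; splitting each trace into the $q_\delta$ and $(1-q_\delta)$ pieces and bounding the second using $|\tau(AB)|\le\|A\|\,\|B\|_1$ from Theorem~\ref{thm:L1Ineq} then delivers the convergence of $\partial_\varepsilon S_n$, $\partial_u S_n$, $\partial_v S_n$. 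Convergence of $S_n$ itself requires the extra step of controlling the logarithm at infinity: the bound $\tau[\log^+|x_t|]<\infty$ from $x_0\in\A^\Delta$ and Theorem~\ref{thm:ADelta}(2), which also applies uniformly to $x_t^{(n)}$, combined with a truncated-log dominated-convergence argument paired with Egoroff, supplies the required tail control and gives both $S_n(t,\lambda,\varepsilon)\to S(t,\lambda,\varepsilon)$ and the stated initial condition at $t=0$.

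The main obstacle will be convergence of the time derivative $\partial_t S_n$, since there is no direct bounded-trace formula for $\partial S/\partial t$ that bypasses the free It\^o calculus of \cite{HallHo2020}, and that calculus is unavailable for unbounded $x_0$. To circumvent this, I would work instead with the integrated form of the PDE satisfied by $S_n$:
\[S_n(t,\lambda,\varepsilon) - S_n(0,\lambda,\varepsilon) = \int_0^t\Bigl[\varepsilon(\partial_\varepsilon S_n)^2 + \tfrac{1}{4}\bigl((\partial_u S_n)^2-(\partial_v S_n)^2\bigr)\Bigr]\,ds.\]
The pointwise convergence of each squared derivative just established, together with the uniform-in-$n$ estimates $|\partial_\varepsilon S_n|\le 1/\varepsilon$ and $|\partial_u S_n|,|\partial_v S_n|\le 1/\sqrt{\varepsilon}$ coming from the operator-norm bounds above, allows an interchange of limit and integral by the classical dominated convergence theorem in the $s$-variable. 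This yields the integrated identity for $S$, and differentiation in $t$ then recovers the PDE in the statement.
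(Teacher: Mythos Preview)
Your proposal follows essentially the same approach as the paper: approximate $x_0$ by bounded spectral truncations $x_0^{(n)}=x_0p_n$, apply the bounded PDE of Theorem~\ref{thm:PDE} to each $S_n$, establish convergence of the resolvent-type functionals (the paper's $f_n,g_n,h_n$) via the noncommutative Egoroff theorem together with the $L^1$-bound of Theorem~\ref{thm:L1Ineq}, and pass to the limit. The main organizational difference is in how the $t$-derivative is recovered. The paper writes $\partial_t S_n=\varepsilon f_n^2+\mathrm{Re}(h_n^2)$, proves $f_n,h_n$ converge \emph{uniformly on compact subsets} of $[0,t]\times\C\times(0,\infty)$ (Lemmas~\ref{lem:DeLimit} and~\ref{lem:DlambLimit}), and then invokes Rudin's theorem on uniform convergence of derivatives (Theorem~\ref{thm:Rudin}) to obtain both $S_{n_k}\to S$ and $\partial_t S=\lim\partial_t S_{n_k}$. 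Your integrated-form-plus-DCT argument is an equivalent packaging of the same idea; just be sure to make the \emph{uniform} (not merely pointwise) convergence of $f_n,g_n,h_n$ on compacts explicit, since that is what justifies exchanging the $s$-integral with the $n$-limit.

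One place to be more careful: you propose to prove $S_n(t,\lambda,\varepsilon)\to S(t,\lambda,\varepsilon)$ directly for $t>0$ via the $\A^\Delta$ condition and a truncated-$\log$ plus Egoroff argument. This is more delicate than the $t=0$ case treated in Lemma~\ref{lem:Sptwise}, because for $t>0$ the operator $|x_t^{(n)}-\lambda|^2+\varepsilon$ no longer commutes with $x_0$, so one cannot simply reduce to a classical integral against $\mu_{x_0}$; controlling $\tau[\log^+(\cdot)]$ uniformly in $n$ in the noncommuting setting takes genuine work. The paper sidesteps this entirely by anchoring only at $t=0$ (where commutativity makes the convergence elementary) and then letting Rudin's theorem propagate the convergence to all $s\in[0,t]$ from the uniform convergence of $\partial_t S_n$. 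You may find that route cleaner than the direct $t>0$ argument you sketch.
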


By definition of $\tilde{A}$, $x_0$ has the polar decomposition $u\vert x_0\vert$. Let
\begin{align}
	A&=u\left\vert x_0\right\vert(\left\vert x_0\right\vert+1)^{-1/2}\nonumber\\
	B &= (\left\vert x_0\right\vert^2+1)^{-1/2}\nonumber\\
	C_t &= A+i\sigma_tB \label{eq:AB}
\end{align}
be bounded variables in $\A$. We can then represent $x_0 = AB^{-1}$ as a ``fraction" of two variables in $\A$. This representation is discovered by Haagerup and Schultz \cite{HaagerupSchultz2007} and is essential in the analysis of the Brown measure of unbounded variables. The notation $C_t$ is just for convenient.

\begin{proposition}
	Suppose $A$ and $B$ are as in \eqref{eq:AB}. Then we have
	\[S(t,\lambda,\varepsilon) = \tau[\log(\vert x_0\vert^2+1)]+\tau[\log((\lambda B - C_t)^*(\lambda B-C_t)+B^2\varepsilon)].\]
\end{proposition}
We note that $\vert x_0\vert^2+1\in\A^\Delta$ by Theorem~\ref{thm:ADelta}; by the definition of $\A^\Delta$, it is clear that $0\leq \tau[\log(\vert x_0\vert^2+1)]<\infty$.
\begin{proof}
	The operator $B$ is self-adjoint. Using $x_0 = AB^{-1}$, we have
	\begin{align*}
		S(t,\lambda,\varepsilon) &= \tau[\log(B^{-1}((\lambda B - C_t)^*(\lambda B - C_t)+B^2\varepsilon)B^{-1})]\\
		&= \tau[\log(B^{-2})]+\tau[\log((\lambda B - C_t)^*(\lambda B-C_t)+B^2\varepsilon)]
	\end{align*}
	by Proposition~\ref{prop:FKMult}. The conclusion follows from the definition of $B$.
\end{proof}

We attempt to apply the free It\^o formula to $\log((\lambda B - C_t)^*(\lambda B-C_t)+B^2\varepsilon)$ and compute the PDE that it satisfies. In the bounded $x_0$ case considered in \cite{HallHo2020}, we apply \cite[Lemma 1.1]{Brown1986} to compute the $\lambda$- and $\varepsilon$- partial derivatives of
\[\tau[\log((\lambda-x_0-i\sigma_t)^*(\lambda-x_0-i\sigma_t)+\varepsilon)].\]
However, when $x_0$ is unbounded, $(\lambda B - C_t)^*(\lambda B-C_t)+B^2\varepsilon$ is not necessarily positive (that is, $0$ is possibly in the spectrum). We cannot apply \cite[Lemma 1.1]{Brown1986} to compute the $\lambda$- and $\varepsilon$- partial derivatives of
\[\tau[\log((\lambda B - C_t)^*(\lambda B-C_t)+B^2\varepsilon)].\]
We also have a similar issue when we apply the free It\^o formula to compute the $t$-derivative. The strategy is to consider another regularization as follows.
\begin{definition}
	\label{def:Teta}
	For any $\eta>0$, we define
	\[T_\eta(t,\lambda,\varepsilon) = \tau[\log((\lambda B - C_t)^*(\lambda B-C_t)+B^2\varepsilon+\eta)].\]
\end{definition}
We then prove that the derivatives of $S$ are all well-approximated by $T_\eta$, and that $S$ satisfies the same PDE as $T_\eta$. Since $(\lambda B - C_t)^*(\lambda B-C_t)+B^2\varepsilon$ is nonnegative, it is easy to see that 
\begin{equation}
	\label{eq:Sptwise}
	T_\eta(t,\lambda,\varepsilon)\to S(t,\lambda,\varepsilon) - \tau[\log(\vert x_0\vert^2+1)]
\end{equation}
pointwise. In Section~\ref{sect:lambdaepsilonDer}, we will compute the partial derivatives of $S$ with respect to $\lambda$ and $\varepsilon$, by taking the limit of the corresponding derivatives of $T_\eta$ as $\eta\downarrow 0$. In Section~\ref{sect:PDEproof}, we will apply the free It\^o formula to compute the PDE of $T_\eta$, and then take limit as $\eta\downarrow 0$ to compute the PDE of $S$. For convenience, we employ the notations
\[R_\eta = (\lambda B - C_t)^*(\lambda B-C_t)+B^2\varepsilon+\eta)^{-1}\]
and
\[R = ((\lambda B - C_t)^*(\lambda B-C_t)+B^2\varepsilon)^{-1}.\]

\subsection{The $\lambda$- and $\varepsilon$-derivatives}
\label{sect:lambdaepsilonDer}
\begin{proposition}
	\label{prop:partials}
	Write $\tilde{R} = (\vert\lambda-x_0-i\sigma_t\vert^2+\varepsilon)^{-1}$. The $\lambda$-, $\bar\lambda$-, and $\varepsilon$-derivatives of $S$ are
	\begin{align*}
		\frac{\partial S}{\partial\lambda} &= \tau[(\lambda-x_0-i\sigma_t)^*\tilde{R})]\\
		\frac{\partial S}{\partial\bar\lambda} &= \tau[(\lambda-x_0-i\sigma_t)\tilde{R}]\\
		\frac{\partial S}{\partial\varepsilon} &= \tau[\tilde{R}].
	\end{align*}
	These traces are all finite numbers. Furthermore, they are the pointwise limit of the corresponding partial derivatives of $T_\eta$ as $\eta\downarrow 0$.
\end{proposition}
The proof uses the fundamental theorem of calculus. The idea is that the partial derivatives of $S$ can be approximated by the corresponding partial derivatives of $T_\eta$. The partial derivatives of $T_\eta$ are recorded as follows.
\begin{lemma}
	\label{lem:Tpartials}
	The $\lambda$-, $\bar\lambda$-, and $\varepsilon$-derivatives of $T_\eta$ are
	\begin{align*}
		\frac{\partial T_\eta}{\partial\lambda} &= \tau[R_\eta (\lambda B-C_t)^*B]\\
		\frac{\partial T_\eta}{\partial\bar\lambda} &= \tau[R_\eta B^*(\lambda B-C_t)]\\
		\frac{\partial T_\eta}{\partial\varepsilon} &= \tau[R_\eta B^2].
	\end{align*}
\end{lemma}
\begin{proof}
	In the defintion of $T_\eta$, the argument of $\log$ is a positive bounded operator. These partial derivatives can be computed using \cite[Lemma 1.1]{Brown1986}.
\end{proof}
The following lemma is a direct consequence of that the trace is a state.
\begin{lemma}
	\label{lem:finitetrace}
	Given any operator $X$ affiliated to $\A$, we have 
	\[\tau[\vert X\vert^2(\vert X\vert^2+\varepsilon)^{-2}]\leq \frac{1}{4\varepsilon}\]
	and
	\[\tau[\vert X\vert^4(\vert X\vert^2+\varepsilon)^{-2}]\leq 1\]
\end{lemma}
\begin{proof}
	This follows directly from the facts that $\|\vert X\vert^2(\vert X\vert^2+\varepsilon)^{-2}\|\leq 1/4\varepsilon$ and $\|\vert X\vert^4(\vert X\vert^2+\varepsilon)^{-2}\|\leq 1$.
\end{proof}
\begin{lemma}
	\label{lem:lambdaCont}
	Let $X$ be an operator affiliated with $\A$ and $Y\in \A$. Given any $\varepsilon>0$, the functions
	\[f(\lambda) = \tau[(\lambda Y-X)(\vert\lambda Y-X\vert^2+\varepsilon)^{-1}]\]
	and
	\[g(\lambda) = \tau[(\vert\lambda Y-X\vert^2+\varepsilon)^{-1}]\]
	are continuous for all $\lambda\in\C$.
\end{lemma}
\begin{proof}
	We assume, without loss of generality, $\|Y\|=1$. Let $\lambda, z\in\C$. We compute
	\begin{align*}
		&(\lambda Y-X)(\vert\lambda Y-X\vert^2+\varepsilon)^{-1} - (z Y-X)(\vert z Y-X\vert^2+\varepsilon)^{-1}\\
		&= (\lambda-z)Y(\vert\lambda Y-X\vert^2+\varepsilon)^{-1} \\
		&\qquad+(z Y-X)(\vert z Y-X\vert^2+\varepsilon)^{-1}(\vert z Y-X\vert^2-\vert \lambda Y-X\vert^2)(\vert \lambda Y-X\vert^2+\varepsilon)^{-1}\\
		&= (\lambda-z)Y(\vert\lambda Y-X\vert^2+\varepsilon)^{-1} \\
		&\qquad+(\vert z\vert^2-\vert\lambda\vert^2)(zY-X)(\vert z Y-X\vert^2+\varepsilon)^{-1}Y^*Y(\vert \lambda Y-X\vert^2+\varepsilon)^{-1}\\
		&\qquad+\overline{(\lambda-z)}(zY-X)(\vert z Y-X\vert^2+\varepsilon)^{-1}Y^*X(\vert \lambda Y-X\vert^2+\varepsilon)^{-1}\\
		&\qquad+(\lambda -z)(z Y-X)(\vert z Y-X\vert^2+\varepsilon)^{-1}X^*Y(\vert \lambda Y-X\vert^2+\varepsilon)^{-1}.
	\end{align*}
	and analyze the above expression term-by-term. 
	
	For the first term, the norm of $(\lambda-z)Y(\vert\lambda Y-X\vert^2+\varepsilon)^{-1}$ is bounded above by $\vert\lambda-z\vert/\varepsilon$ (since we assume $\|Y\| =1$), so
	\[\left\vert\tau[(\lambda -z)Y(\vert\lambda Y-X\vert^2+\varepsilon)^{-1}]\right\vert\leq \vert\lambda-z\vert/\varepsilon.\] 
	For the second term, we apply Cauchy--Schwarz inequality and Lemma~\ref{lem:finitetrace} and get
	\begin{align*}
		&\left\vert\tau[(\vert z\vert^2-\vert\lambda\vert^2)(zY-X)(\vert zY-X\vert^2+\varepsilon)^{-1}Y^*Y(\vert \lambda Y-X\vert^2+\varepsilon)^{-1}]\right\vert^2\\
		&\leq (\vert z\vert^2-\vert\lambda\vert^2)^2\tau[\vert zY-X\vert^2(\vert zY-X\vert^2+\varepsilon)^{-2}]\tau[\vert Y\vert^4(\vert\lambda Y-X\vert^2+\varepsilon)^{-2}]\\
		&\leq (\vert z\vert^2-\vert\lambda\vert^2)^2\frac{1}{4\varepsilon^3}.
	\end{align*}
	The third term can be written as
	\begin{align*}
		&\overline{(\lambda-z)}(zY-X)(\vert zY-X\vert^2+\varepsilon)^{-1}Y^*(X-\lambda Y)(\vert\lambda Y-X\vert^2+\varepsilon)^{-1}\\
		&+ \lambda\overline{(\lambda-z)}(zY-X)(\vert zY-X\vert^2+\varepsilon)^{-1}Y^* Y(\vert\lambda Y-X\vert^2+\varepsilon)^{-1};
	\end{align*}
	thus, using Cauchy--Schwarz inequality and Lemma~\ref{lem:finitetrace} again,
	\begin{align*}
		&\quad\left\vert\tau[\overline{(\lambda-z)}(zY-X)(\vert z Y-X\vert^2+\varepsilon)^{-1}Y^*X(\vert \lambda Y-X\vert^2+\varepsilon)^{-1}]\right\vert\\
		&\leq \left\vert\lambda-z\right\vert\left(\frac{1}{4\varepsilon}+\vert\lambda\vert\frac{1}{2\varepsilon^{3/2}}\right).
	\end{align*}
	Finally, we write the fourth term as
	\begin{align*}
		&(\lambda-z)(zY-X)(\vert zY-X\vert^2+\varepsilon)^{-1}(X-z Y)^*Y(\vert\lambda Y-X\vert^2+\varepsilon)^{-1}\\
		&+ \bar z(\lambda-z)(zY-X)(\vert zY-X\vert^2+\varepsilon)^{-1}Y^* Y(\vert\lambda Y-X\vert^2+\varepsilon)^{-1},
	\end{align*}
	which gives, by Cauchy--Schwarz inequality,
	\begin{align*}
		&\quad\left\vert\tau[(\lambda-z)(zY-X)(\vert z Y-X\vert^2+\varepsilon)^{-1}X^*Y(\vert \lambda Y-X\vert^2+\varepsilon)^{-1}]\right\vert\\
		&\leq \left\vert\lambda-z\right\vert \sqrt{\tau[(\vert \lambda Y-X\vert^2+\varepsilon)^{-2}]}\;\times\\
		&\:\left(\sqrt{\tau[\vert zY-X\vert^4(\vert zY-X\vert^2+\varepsilon)^{-2}]}+\vert z\vert \sqrt{\tau[\vert zY-X\vert^2(\vert zY-X\vert^2+\varepsilon)^{-2}]}\right)\\
		&\leq \vert\lambda-z\vert \frac{1}{\varepsilon}\left(1+\frac{\vert z\vert}{4\varepsilon}\right).
	\end{align*}
	The last inequality follows from Lemma~\ref{lem:finitetrace}. Therefore, we have
	\[\vert f(\lambda)-f(z)\vert\leq \vert\lambda-z\vert\left(\frac{1}{\varepsilon}+\frac{\sqrt{\varepsilon}+2\vert\lambda\vert}{4\varepsilon^{3/2}}+\frac{4\varepsilon+\vert z\vert}{4\varepsilon^2}\right)+\frac{\left\vert \vert z\vert^2-\vert\lambda\vert^2\right\vert}{2\varepsilon^{3/2}}.\]
	
	The above estimate shows that $f$ is continuous. A similar argument shows that $g$ is also continuous.
\end{proof}
\begin{proof}[Proof of Proposition~\ref{prop:partials}]
	Fix any $u_0\in \R$. By Lemma~\ref{lem:Tpartials} and the fundamental theorem of calculus,
	\begin{equation}
		\label{eq:fundCalc}
		T_\eta(t,u+iv,\varepsilon)=T_\eta(t,u_0+iv,\varepsilon) + \int_{u_0}^u \mathrm{Re}\left[\frac{\partial T_\eta}{\partial\lambda}(t,y+iv,\varepsilon)\right]\,dy.
	\end{equation}
	First, since $R (\lambda B-C_t)^*B = (\lambda-x_0-i\sigma_t)(\vert\lambda-x_0-i\sigma_t\vert^2+\varepsilon)^{-1}$, by Lemma~\ref{lem:finitetrace}, $\tau[R (\lambda B-C_t)^*B]$ is a finite number. We compute
	\begin{equation}
		\label{eq:DerDiff}
		\frac{\partial T_\eta}{\partial\lambda}(t,y+ib,\varepsilon)- \tau[R(\lambda B-C_t)^*B] = -\eta R_\eta R (\lambda B-C_t)^*B.
	\end{equation}
	By Cauchy--Schwarz inequality,
	\begin{equation}
		\label{eq:CSineq}
		\left\vert\tau[\eta R_\eta R (\lambda B-C_t)^*B]\right\vert\leq \tau[\eta^2 R_\eta^2]\tau[\vert\lambda-x_0-i\sigma_t\vert^2(\vert\lambda-x_0-i\sigma_t\vert^2+\varepsilon)^{-2}].
	\end{equation}
	Observe that $\tau[\eta^2 R_\eta^2]\to 0$ as $\eta\downarrow 0$ by the dominated convergence theorem. By~\eqref{eq:CSineq}, $\left\vert\tau[\eta R_\eta R (\lambda B-C_t)^*B]\right\vert$ is bounded above uniformly by $1/4\varepsilon$ for all $t, \lambda$ with $\varepsilon$ fixed (see Lemma~\ref{lem:finitetrace}). By the dominated convergence theorem,
	\begin{equation}
		\label{eq:etalimit}
		\int_{u_0}^u \frac{\partial T_\eta}{\partial\lambda}(t,y+iv,\varepsilon)\,dy \to\int_{u_0}^u \tau[(\lambda-x_0-i\sigma_t)(\vert\lambda-x_0-i\sigma_t\vert^2+\varepsilon)^{-1}]\,dy \end{equation}
	as $\eta\downarrow 0$ where $\lambda = y+iv$. Recall that $T_\eta\to S - \log(\vert x_0\vert^2+1)$ pointwise. By Lemma~\ref{lem:lambdaCont}, after taking $\eta\to0$ in \eqref{eq:fundCalc}, we conclude by the fundamental theorem of calculus that
	\[\frac{\partial S}{\partial a}(t, u+iv,\varepsilon) = \tau[(\lambda-x_0-i\sigma_t)(\vert\lambda-x_0-i\sigma_t\vert^2+\varepsilon)^{-1}].\]
	We can similarly compute $\partial S/\partial b$ using the imaginary part of \eqref{eq:etalimit}. It then follows that $\partial S/\partial\lambda$ and $\partial S/\partial\bar\lambda$ are as claimed. The quantity $\partial S/\partial \varepsilon$ can be computed in a similar fashion.

	That the partial derivative $\partial S/\partial\lambda$ is the pointwise limit of $\partial T_\eta/\partial\lambda$ follows from \eqref{eq:DerDiff} and \eqref{eq:CSineq} with an application of the dominated convergence theorem. The other two derivatives $\partial S/\partial\bar\lambda$ and $\partial S/\partial\varepsilon$ follow from a similar argument.
\end{proof}

\subsection{The PDE of $T_\eta$}

We use the free It\^o formula to compute a PDE for $T_\eta$. The free stochastic integration is developed by, for example, \cite{BianeSpeicher1998, KummererSpeicher1992}. The It\^o formula that we apply in this paper is formulated in \cite{{Nikitopoulos2021}}. If $x_r$ and $\tilde{x}_r$ are two freely independent semicircular Brownian motions, they satisfy free It\^o rules given by
	\begin{align}
		\tr[A_r\, dx_r] &= \tr[A_r\,d\tilde{x}_r] = 0\nonumber\\
		dx_r\,A_r\,dx_r &=d\tilde{x}_r\,A_r\,d\tilde{x}_r = \tr[A_r]\,dr\nonumber\\
		dx_r\,A_r\,d\tilde{x}_r &=d\tilde{x}_r\,A_r\,dx_r = 0,\label{eq:scIto}
	\end{align}
	together with a stochastic product rule for processes $m_r^1$ and $m_r^2$ satisfying SDEs involving $x_r$ and $\tilde{x}_r$ (\cite[Theorem 3.2.5]{Nikitopoulos2021})
	\begin{equation}
		\label{eq;Itoprod}
		d(m^1m^2)_r = dm_r^1\,m_r^2+m_1^1\,dm_r^2+(dm_r^1)(dm_r^2).
	\end{equation}

	The following proposition is proved in \cite[Example 3.5.5]{Nikitopoulos2021}  with a circular Brownian motion. It is reformulated in two freely independent semicircular Brownian motions in \cite{HallHo2021}.
\begin{proposition}[\cite{Nikitopoulos2021}]
	\label{ito.prop}Suppose $x_{t}$ and $\tilde{x}_{t}$ are two freely independent
	semicircular Brownian motions in $\mathcal{A}$ and suppose $m_{t}$ is a
	self-adjoint process in $\mathcal{A}$ satisfying a free SDE of the form
	\[
	dm_{t}=\sum_{j=1}^{n}(a_{t}^{j}~dx_{t}~b_{t}^{j}+c_{t}^{j}~d\tilde{x}%
	_{t}~d_{t}^{j})+e_{t}~dt
	\]
	for some continuous, adapted processes $a_{t}^{j},$ $b_{t}^{j},$ $c_{t}^{j},$
	$d_{t}^{j},$ and $e_{t}.$ Then we have%
	\begin{align*}
	\frac{d}{dt}\mathrm{tr}\left[  \log(m_{t}+\eta)\right]   &
	=\mathrm{tr}\left[  (m_{t}+\eta)^{-1}e_{t}\right]  \\
	&  -\frac{1}{2}\sum_{j,k=1}^{n}\mathrm{tr}\left[  (m_{t}+\eta%
	)^{-1}a_{t}^{j}b_{t}^{k}\right]  \mathrm{tr}\left[  (m_{t}+\varepsilon
	^{2})^{-1}a_{t}^{k}b_{t}^{j}\right]  \\
	&  -\frac{1}{2}\sum_{j,k=1}^{n}\mathrm{tr}\left[  (m_{t}+\eta%
	)^{-1}c_{t}^{j}d_{t}^{k}\right]  \mathrm{tr}\left[  (m_{t}+\eta)^{-1}c_{t}^{k}d_{t}^{j}\right]  ,
	\end{align*}
	which can be written, using the free It\^{o} rules (\ref{eq:scIto}), as
	\begin{align*}
	d\mathrm{tr}\left[  \log(m_{t}+\eta)\right]  &=\mathrm{tr}\left[
	(m_{t}+\eta)^{-1}dm_{t}\right] \\
	& \qquad-\frac{1}{2}\mathrm{tr}\left[
	(m_{t}+\eta)^{-1}dm_{t}(m_{t}+\eta)^{-1}dm_{t}\right]  .
	\end{align*}
	\end{proposition}
	The definition of $T_\eta$ (Definition~\ref{def:Teta}) suggests us to apply Proposition~\ref{ito.prop} with 
	\[m_t = (\lambda B - C_t)^*(\lambda B-C_t)+B^2\varepsilon.\]
	Recall that we introduce the notation $R_\eta = (\lambda B - C_t)^*(\lambda B-C_t)+B^2\varepsilon+\eta)^{-1}$.
	\begin{proposition}
		\label{prop:TetatDer}
		The $t$-derivative of $T_\eta$ is given by 
		\begin{equation}
			\label{eq:dTetadt}
			\frac{\partial T_\eta}{\partial t} = \frac{1}{2}\tau[R_\eta (\lambda B-C_t)^*B]^2+\frac{1}{2}\tau[R_\eta B (\lambda B-C_t)] + \varepsilon \tau[R_\eta B^2]^2.
		\end{equation}
		In other words, $T_\eta$ satisfies the PDE
		\begin{equation}
			\label{eq:TetaPDE}
			\frac{\partial T_\eta}{\partial t} = \frac{1}{2}\left(\frac{\partial T_\eta}{\partial\lambda}\right)^2+\frac{1}{2}\left(\frac{\partial T_\eta}{\partial\bar\lambda}\right)^2 + \varepsilon \left(\frac{\partial T_\eta}{\partial\varepsilon}\right)^2.
		\end{equation}
	\end{proposition}
	\begin{proof}
		By ~\eqref{eq:AB}, we have $dC_t = i\,d\sigma_t\,B$. Recall from~\eqref{eq:AB} that $B$ is self-adjoint. Therefore, by It\^o product rule \eqref{eq;Itoprod}, we have
		\[dm_t = -i(\lambda B - C_t)^*\,d\sigma_t\,B+iB\,d\sigma_t\,(\lambda B-C_t)+ B^2dt.\]
		We compute
		\[\frac{\tau[R_\eta\,dm_t)]}{dt} = \tau[R_\eta B^2]\]
		and
		\begin{align*}
			\frac{\tau[R_\eta\,dm_t\,R_\eta\,dm_t]}{dt}& = -\tau[R_\eta (\lambda B-C_t)^*B]^2-\tau[R_\eta B (\lambda B-C_t)]^2 \\
			&\qquad + 2\tau[R_\eta B^2]\tau[R_\eta(\lambda B - C_t)^*(\lambda B - C_t)].
		\end{align*}
		The factor $\tau[R_\eta(\lambda B - C_t)^*(\lambda B - C_t)]$ be written as $1-\varepsilon\tau[R_\eta B^2]$. By Proposition~\ref{ito.prop},
		\[\frac{\partial T_\eta}{\partial t} = \frac{1}{2}\tau[R_\eta (\lambda B-C_t)^*B]^2+\frac{1}{2}\tau[R_\eta B (\lambda B-C_t)]^2 + \varepsilon \tau[R_\eta B^2]^2\]
		which is~\eqref{eq:dTetadt}. The PDE~\eqref{eq:TetaPDE} then follows from Lemma~\ref{lem:Tpartials}.
	\end{proof}
	\subsection{Proof of Theorem~\ref{thm:UnboundedPDE}}
	\label{sect:PDEproof}
	\begin{proof}
		By \eqref{eq:TetaPDE}, 
		\begin{equation}
			\label{eq:Tetaintegral}
			T_\eta(t,\lambda,\varepsilon) = \mathrm{Re}\left[\int_0^t \left(\left(\frac{\partial T_\eta}{\partial\lambda}\right)^2+ \varepsilon \left(\frac{\partial T_\eta}{\partial\varepsilon}\right)^2\right)ds\right].
		\end{equation}
		We have, using Lemma~\ref{lem:Tpartials}, the inequality
		\[\frac{\partial T_\eta}{\partial\varepsilon}\leq \frac{1}{\varepsilon}\]
		for all $\eta$. By Lemma~\ref{lem:Tpartials},
		\begin{align*}\frac{\partial T_\eta}{\partial\lambda} &= \tau[R_\eta(\lambda B - C_t)^*B]\\
			&=\tau[(\lambda-x_0-i\sigma_t)((\lambda-x_0-i\sigma_t)^*(\lambda-x_0-i\sigma_t)+B^{-2}\eta+\varepsilon)^{-1}]
		\end{align*}
		so we can apply Cauchy-Schwarz inequality to get
		\[\left\vert\frac{\partial T_\eta}{\partial\lambda}\right\vert^2\leq \tau[\vert\lambda-x_0-i\sigma_t\vert^2(\vert\lambda-x_0-i\sigma_t\vert^2+B^{-2}\eta+\varepsilon)^{-2}]\leq \frac{1}{4\varepsilon}.\]
		By Proposition~\ref{prop:partials}, the partial derivatives of $S$ with respect to $\lambda$, $\bar\lambda$, and $\varepsilon$ are pointwise limit of the corresponding partial derivatives of $T_\eta$. Since $S$ is the pointwise limit of $T_\eta+\tau[\log(\vert x_0\vert^2+1)]$ by \eqref{eq:Sptwise}, by applying the dominated convergence theorem to \eqref{eq:Tetaintegral},
		\[S(t,\lambda,\varepsilon)-\tau[\log(\vert x_0\vert^2+1)] = \mathrm{Re}\left[\int_0^t \left(\left(\frac{\partial S}{\partial\lambda}\right)^2+ \varepsilon \left(\frac{\partial S}{\partial\varepsilon}\right)^2\right)ds\right]\]
		By Lemma~\ref{lem:lambdaCont}, since $\sigma_t$ has the same distribution as $\sqrt{t}\sigma$, the integrand is continuous with respect to the parameter $s$. Differentiating the above equation with respect to $t$ gives the PDE of $S$. The initial condition of $S$ follows from the fact that $x_0$ is self-adjoint.
	\end{proof}

\section{Hamilton--Jacobi analysis}
\label{sect:HJ}
\subsection{Solving the PDE}
As in \cite{HallHo2020}, we use the Hamilton--Jacobi method to analyze the function $S$ through the PDE in Theorem~\ref{thm:UnboundedPDE}. We define the Hamiltonian function $H:\R^6\to\R$ by replacing the partial derivatives of $S$ in the PDE by the corresponding momentum variables and putting an overall minus sign. That is, 
\[H(u,v,\varepsilon, p_u, p_v, p_\varepsilon)=-\frac{1}{4}(p_u^2-p_v^2)-\varepsilon p_\varepsilon^2.\]
Then, we introduce the Hamiltonian system (the Hamilton's equations) for $H$:
\begin{equation}
	\label{eq:ODE}
\begin{array}{lll}
	\displaystyle\frac{du}{dt} = \frac{\partial H}{\partial p_u}; & \displaystyle\frac{dv}{dt} = \frac{\partial H}{\partial p_v}; & \displaystyle\frac{d\varepsilon}{dt} = \frac{\partial H}{\partial p_\varepsilon};\\[10pt]
		\displaystyle\frac{dp_u}{dt} = -\frac{\partial H}{\partial u}; & \displaystyle\frac{dp_v}{dt} = -\frac{\partial H}{\partial v}; & \displaystyle\frac{dp_\varepsilon}{dt} = -\frac{\partial H}{\partial \varepsilon}.
	\end{array}
\end{equation}
We note that the Hamiltonian function and the system of ODEs are the same as the one in \cite{HallHo2020} because the PDE is the same.

Write $\lambda=u+iv$. The initial conditions of $u$, $v$, $\varepsilon$ for the ODEs~\eqref{eq:ODE} can be chosen arbitrarily; these initial conditions are denoted by $u_0$, $v_0$, $\varepsilon_0$. The initial momenta $p_u$, $p_v$, $p_\varepsilon$ are denoted by $p_{u,0}$, $p_{v,0}$, and $p_0$ respectively, and are chosen depending on $u_0, v_0, \varepsilon_0$ as
\begin{equation}
	\label{eq:momenta}
	\begin{split}
	p_{u,0} &= \frac{\partial}{\partial u_0}S(0, \lambda_0, \varepsilon_0)=\int\frac{2(u_0-x)\,d\mu_{x_0}(x)}{(u_0-x)^2+v_0^2+\varepsilon_0};\\
	p_{v,0} &= \frac{\partial}{\partial v_0}S(0, \lambda_0, \varepsilon_0)=\int\frac{2v_0\,d\mu_{x_0}(x)}{(u_0-x)^2+v_0^2+\varepsilon_0};\\
	p_{0} &= \frac{\partial}{\partial \varepsilon_0}S(0, \lambda_0, \varepsilon_0)=\int\frac{d\mu_{x_0}(x)}{(u_0-x)^2+v_0^2+\varepsilon_0}.
	\end{split}
	\end{equation}
We have written $\lambda_0=u_0+iv_0$. Note that $S$ is defined with $\varepsilon>0$. The $\varepsilon_0$ that we choose here is also positive.

Using the Hamilton--Jacobi method, the solution $S$ of the PDE can be expressed in terms of the initial conditions, as stated in the following proposition. For the proof for the proposition, readers are referred to \cite[Proposition 4.2]{HallHo2020}. For a more general statements, see \cite[Section 6.1]{DHK2019} and \cite{EvansBook}.
\begin{proposition}
	\label{prop:HJformulas}
	Suppose that the solution to the Hamiltonian system~\eqref{eq:ODE} exists on a time interval $[0,T)$ such that $\varepsilon(t)>0$ for all $t\in[0,T)$. Then we have, for all $t\in[0,T)$,
	\[S(t,\lambda(t),\varepsilon(t)) = S(0,\lambda_0,\varepsilon_0)+tH_0\]
	where $H_0=H(u_0,v_0,\varepsilon_0,p_{u,0}, p_{v,0}, p_0)$. Also, we have
	\[\frac{\partial S}{\partial y}(t,\lambda(t),\varepsilon(t)) = p_{y}(t)\]
	for all $y\in\{u, v, \varepsilon\}$ and $t\in[0,T)$.
	\end{proposition}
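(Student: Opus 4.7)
The plan is to apply the method of characteristics to the PDE in Theorem~\ref{thm:UnboundedPDE}, which we rewrite in standard form $\partial_t S + H(u, v, \varepsilon, \nabla S) = 0$ using the Hamiltonian $H$ defined just before~\eqref{eq:ODE}. The system~\eqref{eq:ODE} is precisely Hamilton's equations, and the initial momenta~\eqref{eq:momenta} have been chosen to equal the spatial gradient of $S$ at $t=0$. I would establish the gradient identity $p_y(t) = \partial_y S(t, \lambda(t), \varepsilon(t))$ first, and then the value formula.

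For the gradient identity, set $q_y(t) := \partial_y S(t, \lambda(t), \varepsilon(t))$ for $y \in \{u, v, \varepsilon\}$ and compute $\dot q_y$ by the chain rule, invoking $\dot u = H_{p_u}$ etc.\ from~\eqref{eq:ODE}. Differentiating the PDE in the spatial variable $y$ gives
\[\partial_t(\partial_y S) + \partial_y H(u,v,\varepsilon,\nabla S) + \sum_{z} H_{p_z}(u,v,\varepsilon,\nabla S)\, \partial_y\partial_z S = 0.\]
Evaluating at the characteristic and plugging into $\dot q_y$, the Hessian-times-$H_p$ terms cancel by the symmetry $\partial_y\partial_z S = \partial_z\partial_y S$, leaving
\[\dot q_y = -\partial_y H(u,v,\varepsilon, q_u, q_v, q_\varepsilon)\]
after identifying $q_z = \partial_z S$ along the path. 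Hence $(u, v, \varepsilon, q_u, q_v, q_\varepsilon)$ satisfies the same Hamiltonian system as $(u, v, \varepsilon, p_u, p_v, p_\varepsilon)$ with identical initial data, so uniqueness of ODE solutions on $[0, T)$ gives $q_y \equiv p_y$.

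For the value formula, differentiate $S(t, \lambda(t), \varepsilon(t))$ along the characteristic:
\[\frac{d}{dt} S = \partial_t S + \sum_{y} (\partial_y S)\,\dot y = -H + \sum_y p_y H_{p_y} = -H + p \cdot \nabla_p H,\]
using the PDE, the gradient identity just proved, and~\eqref{eq:ODE}. Since $H$ is homogeneous of degree two in $(p_u, p_v, p_\varepsilon)$, Euler's identity yields $p \cdot \nabla_p H = 2H$, hence $dS/dt = H$ along the characteristic. A direct calculation from~\eqref{eq:ODE} shows $dH/dt = 0$, so $H \equiv H_0$ on $[0, T)$; integrating gives $S(t, \lambda(t), \varepsilon(t)) - S(0, \lambda_0, \varepsilon_0) = t H_0$.

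The main obstacle is regularity: the Hessian cancellation in the gradient-identity step requires $S$ to be $C^2$ in $(u, v, \varepsilon)$ for $\varepsilon > 0$. Proposition~\ref{lem:SPartial} gives the first derivatives as traces of resolvents $(x_{t,\lambda}^* x_{t,\lambda} + \varepsilon)^{-1}$; since these resolvents are uniformly bounded in operator norm by $1/\varepsilon$ on compact subsets of $\{\varepsilon > 0\}$, one can differentiate again using standard resolvent identities and repeat the noncommutative Egoroff plus $L^1$-inequality argument from Lemmas~\ref{lem:DeLimit} and~\ref{lem:DlambLimit} at the second-derivative level. This is the only place where being careful about unboundedness of $x_0$ matters; once $C^2$-smoothness of $S$ in $(u, v, \varepsilon)$ is in hand, the remainder of the argument is purely classical Hamilton--Jacobi theory.
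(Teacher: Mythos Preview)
Your overall strategy is the standard Hamilton--Jacobi characteristics argument, which is exactly what the paper invokes by citing \cite[Proposition 4.2]{HallHo2020}, \cite[Section 6.1]{DHK2019}, and Evans; the paper itself gives no self-contained proof. Your value-formula step is clean, and the use of Euler's identity for the degree-two homogeneity of $H$ in the momenta is correct.

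There is, however, a genuine slip in the gradient-identity step. You compute $\dot q_y$ using $\dot z = H_{p_z}(u,v,\varepsilon,p)$ from~\eqref{eq:ODE}, while the differentiated PDE produces $H_{p_z}(u,v,\varepsilon,\nabla S) = H_{p_z}(u,v,\varepsilon,q)$. The Hessian-weighted sums $\sum_z(\partial_y\partial_z S)\,H_{p_z}(\cdot,p)$ and $\sum_z(\partial_y\partial_z S)\,H_{p_z}(\cdot,q)$ do \emph{not} cancel by symmetry of mixed partials; they differ by $\sum_z(\partial_y\partial_z S)\,[H_{p_z}(\cdot,p)-H_{p_z}(\cdot,q)]$, which vanishes only once you already know $p=q$. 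The symmetry $\partial_y\partial_z S=\partial_z\partial_y S$ plays no role here.

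The standard repair is to reverse the logic: define an auxiliary trajectory $\tilde\lambda(t),\tilde\varepsilon(t)$ by $\dot{\tilde z}=H_{p_z}(\tilde u,\tilde v,\tilde\varepsilon,\nabla S(t,\tilde\lambda,\tilde\varepsilon))$ with the same initial point, and set $\tilde p(t)=\nabla S(t,\tilde\lambda(t),\tilde\varepsilon(t))$. Then your differentiation argument goes through verbatim (now both occurrences of $H_{p_z}$ have argument $\tilde p$), showing $(\tilde\lambda,\tilde\varepsilon,\tilde p)$ solves~\eqref{eq:ODE} with the same initial data as $(\lambda,\varepsilon,p)$; uniqueness then gives $\tilde\lambda=\lambda$, $\tilde\varepsilon=\varepsilon$, and hence $\nabla S(t,\lambda(t),\varepsilon(t))=\tilde p(t)=p(t)$. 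Alternatively one can subtract the two momentum evolutions and close with Gronwall, since $H$ is polynomial and the Hessian of $S$ is continuous for $\varepsilon>0$. Your remarks on $C^2$ regularity via resolvent identities are the right ingredient for either route.
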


\subsection{Solving the ODEs}
The PDE in our case (Theorem~\ref{thm:UnboundedPDE}) is the same as the one in \cite{HallHo2020}. In this section, we briefly summarize the results from \cite{HallHo2020} about the solutions of the ODEs~\eqref{eq:ODE}.

The following proposition combines \cite[Proposition 4.4]{HallHo2020} and \cite[Definition 4.6]{HallHo2020}.
\begin{proposition}
	\label{prop:ODEsol}
	Suppose that the initial conditions $\lambda_0=u_0+iv_0$, $\varepsilon_0$ are chosen, and initial momenta $p_{u,0}$, $p_{v,0}$, $p_0$ are determined by~\eqref{eq:momenta}. Then the solution of the Hamiltonian system~\eqref{eq:ODE} exists up to
	\begin{equation}
		\label{eq:tstar}t_\ast(\lambda_0,\varepsilon_0) = \frac{1}{p_0} = \left(\int\frac{d\mu_{x_0}(x)}{(u_0-x)^2+v_0^2+\varepsilon_0}\right)^{-1}.
		\end{equation}
	Up until $t_\ast(\lambda_0,\varepsilon_0)$, we have
	\[
	\begin{array}{ll}
	\displaystyle u(t) = u_0-\frac{t}{2}p_{u,0}; &\displaystyle  p_u(t) = p_{u,0};\\[10pt]
	\displaystyle v(t) = v_0+\frac{t}{2}p_{v,0}; &\displaystyle  p_v(t) = p_{v,0};\\[10pt]
	\displaystyle \varepsilon(t) = \varepsilon_0(1-tp_0)^2; & \displaystyle p_{\varepsilon}(t) = \frac{p_0}{1-tp_0}.
	\end{array}
		\]
	\end{proposition}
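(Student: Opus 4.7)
The plan is to compute Hamilton's equations explicitly from $H(u,v,\varepsilon,p_u,p_v,p_\varepsilon)=-\tfrac14(p_u^2-p_v^2)-\varepsilon p_\varepsilon^2$ and integrate them one by one. Since $H$ is independent of $u$ and $v$, the momentum equations $\dot p_u = -\partial H/\partial u = 0$ and $\dot p_v = -\partial H/\partial v = 0$ immediately yield $p_u(t)\equiv p_{u,0}$ and $p_v(t)\equiv p_{v,0}$. The position equations $\dot u = \partial H/\partial p_u = -p_u/2$ and $\dot v = \partial H/\partial p_v = p_v/2$ then integrate trivially to give $u(t)=u_0-\tfrac{t}{2}p_{u,0}$ and $v(t)=v_0+\tfrac{t}{2}p_{v,0}$.

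The nontrivial part is the pair $(\varepsilon,p_\varepsilon)$, which decouples from $u,v$. First I would observe that $\dot p_\varepsilon = -\partial H/\partial \varepsilon = p_\varepsilon^2$ is an autonomous Riccati-type ODE in $p_\varepsilon$ alone, solvable by separation of variables: $-d(1/p_\varepsilon)/dt = 1$, so $1/p_\varepsilon(t) = 1/p_0 - t$, giving the stated formula $p_\varepsilon(t)=p_0/(1-tp_0)$, which is smooth precisely on $[0,1/p_0)$ and blows up at $t=t_\ast$. Substituting this into $\dot \varepsilon = -2\varepsilon p_\varepsilon$ produces a linear ODE whose solution is $\log\varepsilon(t) - \log \varepsilon_0 = -2\int_0^t p_0/(1-sp_0)\,ds = 2\log(1-tp_0)$, that is $\varepsilon(t)=\varepsilon_0(1-tp_0)^2$. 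In particular $\varepsilon(t)>0$ on the open interval $[0,t_\ast)$, so the only obstruction to continuing the flow is the blowup of $p_\varepsilon$ at $t_\ast = 1/p_0$.

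To finish I would verify that the initial data produced by~\eqref{eq:momenta} is well-defined and that the Riccati blowup really is the first failure of the solution. Finiteness of $p_{u,0}$, $p_{v,0}$, $p_0$ is immediate from $\varepsilon_0>0$: the integrands are bounded above by $1/\sqrt{v_0^2+\varepsilon_0}$ (using $2|u_0-x|\le (u_0-x)^2+v_0^2+\varepsilon_0$ for $p_{u,0}$) and by $1/\varepsilon_0$ for $p_0$, so Assumption~\ref{assump:standing} plays no role here. The identity $p_0 = \int((u_0-x)^2+v_0^2+\varepsilon_0)^{-1}\,d\mu_{x_0}(x) > 0$ shows $t_\ast<\infty$, consistent with the formula. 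Since the closed-form solution I wrote down satisfies each of the six Hamilton equations identically on $[0,t_\ast)$ and agrees with the prescribed initial data at $t=0$, uniqueness of solutions to the Hamiltonian ODE (a locally Lipschitz system wherever $\varepsilon>0$) guarantees this is \emph{the} solution, completing the proof.

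The main obstacle is essentially bookkeeping rather than anything conceptual: one must confirm that no singularity in $u,v,\varepsilon$ can occur before $p_\varepsilon$ blows up, which is transparent here because the explicit formulas show $\varepsilon(t)$ stays strictly positive on $[0,t_\ast)$ and $u(t),v(t)$ are affine in $t$.
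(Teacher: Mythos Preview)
Your proposal is correct and is exactly the direct computation that underlies the paper's statement; the paper itself does not reproduce a proof but cites \cite[Proposition~4.4 and Definition~4.6]{HallHo2020}, where the same explicit integration of Hamilton's equations is carried out. One minor quibble: the inequality you invoke for bounding $p_{u,0}$, namely $2|u_0-x|\le (u_0-x)^2+v_0^2+\varepsilon_0$, does not directly yield the bound $1/\sqrt{v_0^2+\varepsilon_0}$ (and fails when $v_0^2+\varepsilon_0<1$); the correct version is $2|u_0-x|\sqrt{v_0^2+\varepsilon_0}\le (u_0-x)^2+v_0^2+\varepsilon_0$ by AM--GM, which gives exactly the bound you state.
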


For a fixed $\lambda_0\in\C$, $t_\ast(\lambda_0,\varepsilon_0)$ defined in \eqref{eq:tstar} is strictly decreasing in $\varepsilon_0$. For each $\lambda_0\in\C$, the lifetime of the solution path in the limit $\varepsilon_0\to0^+$ is given by
\begin{equation}
\label{eq:T}
T(\lambda_0) = \lim_{\varepsilon_0\to 0^+}  t_\ast (\lambda_0,\varepsilon_0) = \left(\int\frac{d\mu_{x_0}(x)}{(u_0-x)^2+v_0^2}\right)^{-1}.
\end{equation}
By Proposition~\ref{prop:ODEsol}, informally there are two ways to make $\varepsilon(t) = 0$. The first one is to take $\varepsilon_0 = 0$; the second one is to choose $\varepsilon_0$ such that $p_0=1/t$. The first scheme works only if the lifetime of the solution path remains greater than $t$ in the limit $\varepsilon_0\to 0^+$; that is, if $T(\lambda_0)\geq t$. If the first scheme does not work (that is, if $T(\lambda_0)< t$), one needs to use the second scheme to achieve $\varepsilon(t) = 0$. 

We now identify the set of points $\lambda_0\in\C$ such that $T(\lambda_0)<t$. This set of points plays a crucial role in \cite{HallHo2020,HoZhong2019}. For proof, see \cite[Proposition 5.2]{HallHo2020}.
\begin{proposition}
	\label{prop:LambdatId}
The set 
\begin{equation}
	\label{eq:Lambdat}
	\Lambda_{x_0,t} =\{\lambda_0\in\C\left\vert T(\lambda_0)<t\right.\}
	\end{equation}
	can be identified as 
	\[\Lambda_{x_0,t}=\left.\{u_0+iv_0\in\C\right\vert\left\vert v_0\right\vert<v_{x_0,t}(u_0)\}\]
	where $v_{x_0,t}$ is defined in Definition~\ref{def:SCConv}(2). In particular, we can identify $\Lambda_{x_0,t}\cap\R  = \left.\{u_0\in\R\right\vert v_{x_0,t}(u_0)>0\}$. Furthermore, we have $\mathrm{supp}\,\mu_{x_0}\subset\overline{\Lambda_{x_0,t}}$.
\end{proposition}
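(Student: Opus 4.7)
The plan is to analyze the auxiliary function
\[F_{u_0}(v) := \int \frac{d\mu_{x_0}(x)}{(u_0 - x)^2 + v^2}, \quad v \ge 0,\]
where we allow the value $F_{u_0}(0) = +\infty$. First I would apply the monotone convergence theorem to the integrals defining $T(\lambda_0)$ in~\eqref{eq:T}: as $\varepsilon_0 \to 0^+$ the integrand $1/((u_0-x)^2+v_0^2+\varepsilon_0)$ increases monotonically to $1/((u_0-x)^2+v_0^2)$, yielding
\[T(\lambda_0) = \frac{1}{F_{u_0}(|v_0|)} \qquad (\text{with the convention } 1/(+\infty) = 0).\]
Therefore $T(\lambda_0) < t$ is equivalent to $F_{u_0}(|v_0|) > 1/t$.

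The function $F_{u_0}$ is continuous and strictly decreasing on $(0, \infty)$ with $\lim_{v \to \infty} F_{u_0}(v) = 0$ (dominated convergence), and $F_{u_0}(v) \nearrow F_{u_0}(0)$ as $v \searrow 0$ (monotone convergence). I would now split into two cases matching the definition of $v_{x_0,t}$ in Definition~\ref{def:SCConv}(2). If $F_{u_0}(0) > 1/t$, the intermediate value theorem gives a unique positive solution of $F_{u_0}(v) = 1/t$, which is exactly $v_{x_0,t}(u_0)$; strict monotonicity then yields $F_{u_0}(|v_0|) > 1/t \iff |v_0| < v_{x_0,t}(u_0)$. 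If $F_{u_0}(0) \le 1/t$, then $F_{u_0}(v) \le 1/t$ for all $v \ge 0$, so the condition $T(\lambda_0) < t$ fails for every $v_0$ and $v_{x_0,t}(u_0) = 0$ by definition, making both sides of the desired equivalence vacuous. This identifies $\Lambda_{x_0,t}$ as stated; specializing to $v_0 = 0$ yields the real-axis description $\Lambda_{x_0,t} \cap \R = \{u_0 : v_{x_0,t}(u_0) > 0\}$.

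For the support inclusion, let $u \in \mathrm{supp}(\mu_{x_0})$ and $U = (u - \eta, u + \eta)$ for arbitrary $\eta > 0$, so that $\mu_{x_0}(U) > 0$. A Fubini--Tonelli computation gives
\[\int_U F_{u'}(0)\,du' = \int_\R \left( \int_U \frac{du'}{(u'-x)^2} \right) d\mu_{x_0}(x),\]
and for each $x \in U$ the inner integral is $+\infty$ because of the non-integrable singularity at the interior point $u' = x$. Since $\mu_{x_0}(U) > 0$, the left-hand side equals $+\infty$, yet $|U|$ is finite, so $\{u' \in U : F_{u'}(0) > 1/t\}$ has positive Lebesgue measure and in particular is nonempty. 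Any such $u'$ belongs to $\Lambda_{x_0,t} \cap \R$, and since $\eta > 0$ was arbitrary we conclude $u \in \overline{\Lambda_{x_0,t}}$.

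The main obstacles are the degenerate edge cases---the possibility $F_{u_0}(0) = +\infty$, the boundary case $v_{x_0,t}(u_0) = 0$, and ensuring that the Fubini step produces $F_{u'}(0)$ strictly exceeding $1/t$ (not merely being nonzero) near each support point. Crucially, no step in the argument exploits boundedness of $x_0$; all estimates rely only on $\mu_{x_0}$ being a probability measure on $\R$, which is why the proof reduces verbatim to the one already given for bounded $x_0$ in \cite[Proposition 5.2]{HallHo2020}.
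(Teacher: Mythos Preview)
Your argument is correct. The paper itself does not supply a proof of this proposition; it simply refers the reader to \cite[Proposition 5.2]{HallHo2020}, implicitly asserting that the bounded-case argument carries over unchanged. Your write-up therefore fills in exactly what the paper omits, and your closing remark that nothing in the reasoning uses boundedness of $x_0$ is precisely the point the paper is relying on.

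One small comment on the support-inclusion step: the Fubini--Tonelli device you use is clean and correct, but note that a slightly more direct route is also available. If $u\in\mathrm{supp}\,\mu_{x_0}$ and $\eta>0$, set $m=\mu_{x_0}((u-\eta,u+\eta))>0$. Since $\mu_{x_0}$ has no atoms or possibly has atoms, in either case one can find $u'\in(u-\eta,u+\eta)$ and $\delta>0$ with $(u'-\delta,u'+\delta)\subset(u-\eta,u+\eta)$ and $\mu_{x_0}((u'-\delta,u'+\delta))\ge m/3$ while $\delta$ is as small as we like; then $F_{u'}(0)\ge m/(3\delta^2)$, which exceeds $1/t$ for $\delta$ small. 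Your averaging argument avoids this case split and is arguably more elegant. Either way, the conclusion $\mathrm{supp}\,\mu_{x_0}\subset\overline{\Lambda_{x_0,t}}$ follows, and the proposal matches what the paper intends by its citation.
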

For notational convenience, we also define
\begin{equation}
	\label{eq:e0p1}
	\begin{split}
	\varepsilon_0^t(u_0+iv_0) &= v_{x_0,t}(u_0)^2-v_0^2\\
	p_1 &= \int\frac{x\,d\mu_{x_0}(x)}{(u_0-x)^2+v_0^2+\varepsilon_0}.
	\end{split}
	\end{equation}
Note that $\varepsilon_0^t(\lambda_0)>0$ for $\lambda_0\in\Lambda_{x_0,t}$ and $\varepsilon_0^t(\lambda_0)=0$ for $\lambda_0\in\partial\Lambda_{x_0,t}$. Then we have the following solution at time $t$. 
\begin{proposition}
	\label{prop:limitase}
	For initial conditions $\lambda_0\in\C$ and $\varepsilon_0>0$, denote by $\lambda(t;\lambda_0,\varepsilon_0)$ and $\varepsilon(t;\lambda_0,\varepsilon_0)$ the solution to system~\eqref{eq:ODE} at time $t$.
	\begin{enumerate}
		\item If $\lambda_0\in\Lambda_{x_0,t}^c$, then $t_\ast(\lambda_0,\varepsilon_0)>t$  for all $\varepsilon_0>0$, and
		\begin{align*}
			\lim_{\varepsilon_0\to 0^+}\lambda(t;\lambda_0,\varepsilon_0) &= \lambda_0-t\int\frac{d\mu_{x_0}(x)}{\lambda_0-x}\\
			\lim_{\varepsilon_0\to 0^+}\varepsilon(t;\lambda_0,\varepsilon_0)&=0.
			\end{align*}
		\item If $\lambda_0\in\Lambda_{x_0,t}$, then $\varepsilon_0^t(\lambda_0)>0$ and $t_\ast(\lambda_0,\varepsilon_0^t(\lambda_0))=t$. Furthermore, we have
		\begin{align*}
			\lim_{\varepsilon_0\to \varepsilon_0^t(\lambda_0)^+} \lambda(t;\lambda_0,\varepsilon_0)& = tp_1+2iv_0\\
			\lim_{\varepsilon_0\to\varepsilon_0^t(\lambda_0)^+} \varepsilon(t;\lambda_0,\varepsilon_0) &= 0.
			\end{align*}
		\end{enumerate}
	\end{proposition}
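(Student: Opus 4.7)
The plan is to derive both parts directly from the explicit solution formulas in Proposition~\ref{prop:ODEsol} combined with the momentum formulas in~\eqref{eq:momenta}. The key identities I will use repeatedly are
\[
p_{u,0}-ip_{v,0}=2\int\frac{\overline{\lambda_{0}-x}}{|\lambda_{0}-x|^{2}+\varepsilon_{0}}\,d\mu_{x_{0}}(x),\qquad
\tfrac{t}{2}p_{u,0}=t(u_{0}p_{0}-p_{1}),\qquad
\tfrac{t}{2}p_{v,0}=tv_{0}p_{0},
\]
where $p_{1}$ is as in~\eqref{eq:e0p1}. From these, $\lambda(t;\lambda_{0},\varepsilon_{0})=\lambda_{0}-\tfrac{t}{2}(p_{u,0}-ip_{v,0})$ and $\varepsilon(t;\lambda_{0},\varepsilon_{0})=\varepsilon_{0}(1-tp_{0})^{2}$ are continuous functions of $\varepsilon_{0}$ as long as the solution reaches time $t$, so the proposition reduces to understanding the behavior of $p_{0},p_{u,0},p_{v,0}$ at the appropriate boundary values of $\varepsilon_{0}$.

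For part~(1), I first verify $t_{\ast}(\lambda_{0},\varepsilon_{0})>t$ for every $\varepsilon_{0}>0$: the integral defining $p_{0}$ is strictly decreasing in $\varepsilon_{0}$ with limit $1/T(\lambda_{0})\le 1/t$ as $\varepsilon_{0}\to 0^{+}$, since $T(\lambda_{0})\ge t$. Hence $p_{0}<1/t$ and $t_{\ast}=1/p_{0}>t$. The claim $\varepsilon(t)\to 0$ is immediate because $\varepsilon_{0}\to 0^{+}$ and $(1-tp_{0})^{2}$ stays bounded. For $\lambda(t)$, I pass to the limit $\varepsilon_{0}\to 0^{+}$ inside the integral by dominated convergence, with dominator $1/|\lambda_{0}-x|$; this dominator is $\mu_{x_{0}}$-integrable because Cauchy--Schwarz gives
\[
\int\frac{d\mu_{x_{0}}(x)}{|\lambda_{0}-x|}\le\left(\int\frac{d\mu_{x_{0}}(x)}{|\lambda_{0}-x|^{2}}\right)^{1/2}\le t^{-1/2},
\]
using $T(\lambda_{0})\ge t$. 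The limit then collapses to $2\int d\mu_{x_{0}}(x)/(\lambda_{0}-x)$, yielding the claimed formula for $\lambda(t)$.

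For part~(2), the condition $\lambda_{0}\in\Lambda_{x_{0},t}$ translates, via Proposition~\ref{prop:LambdatId}, to $|v_{0}|<v_{x_{0},t}(u_{0})$, so $\varepsilon_{0}^{t}(\lambda_{0})=v_{x_{0},t}(u_{0})^{2}-v_{0}^{2}>0$. At $\varepsilon_{0}=\varepsilon_{0}^{t}(\lambda_{0})$ the denominator $(u_{0}-x)^{2}+v_{0}^{2}+\varepsilon_{0}^{t}$ becomes $(u_{0}-x)^{2}+v_{x_{0},t}(u_{0})^{2}$, and Definition~\ref{def:SCConv}(2) gives $p_{0}=1/t$, hence $t_{\ast}(\lambda_{0},\varepsilon_{0}^{t}(\lambda_{0}))=t$. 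For $\varepsilon_{0}>\varepsilon_{0}^{t}(\lambda_{0})$, monotonicity gives $p_{0}<1/t$, so $t_{\ast}>t$ and the solution reaches time $t$; everything is continuous in $\varepsilon_{0}$, so the one-sided limit as $\varepsilon_{0}\to \varepsilon_{0}^{t}(\lambda_{0})^{+}$ equals the value at $\varepsilon_{0}=\varepsilon_{0}^{t}(\lambda_{0})$. Plugging $p_{0}=1/t$ into the identities above yields $u(t)=tp_{1}$, $v(t)=2v_{0}$, and $\varepsilon(t)=\varepsilon_{0}^{t}(\lambda_{0})\cdot 0=0$.

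The only nontrivial step is the dominated-convergence argument in part~(1); the essential observation is that $T(\lambda_{0})\ge t>0$, together with $\mu_{x_{0}}$ being a probability measure, is enough to force $1/|\lambda_{0}-x|\in L^{1}(\mu_{x_{0}})$, even when $\lambda_{0}$ lies on $\partial\Lambda_{x_{0},t}$ or inside $\mathrm{supp}(\mu_{x_{0}})$---the place where an unbounded and possibly spread-out $x_{0}$ could cause trouble. Everything else reduces to algebraic bookkeeping with the explicit formulas of Proposition~\ref{prop:ODEsol}.
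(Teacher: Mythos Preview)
Your proof is correct and essentially follows the same route as the argument in the cited references: plug the explicit formulas for $u(t),v(t),\varepsilon(t)$ from Proposition~\ref{prop:ODEsol} into the expressions for the initial momenta~\eqref{eq:momenta}, and take limits in $\varepsilon_0$. The paper itself simply defers to \cite[Propositions~4.5 and~4.7]{HallHo2020}, so you are supplying the details that live there. Your dominated-convergence justification in part~(1)---using Cauchy--Schwarz and $T(\lambda_0)\ge t$ to show $1/|\lambda_0-x|\in L^1(\mu_{x_0})$---is a clean way to handle the boundary case $\lambda_0\in\partial\Lambda_{x_0,t}$ where $v_0$ may vanish, and it is the point most sensitive to $x_0$ being unbounded.
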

\begin{proof}
	Points 1 and 2 are Propositions 4.7 and 4.5 in \cite{HallHo2020} respectively. The choice $\varepsilon_0^t(\lambda_0)$ can be found in Section 7.3 of \cite{HallHo2020}, right before Lemma 7.7.
	\end{proof}

\section{The Brown measure of $x_0+i\sigma_t$\label{sect:BrownSc}}
\subsection{The domain}
Define the holomorphic map
\begin{equation}
\label{eq:H-t}
H_{x_0,-t}(z) = z - t G_{x_0}(\lambda_0),\quad\lambda_0\not\in\mathrm{supp}\,\mu_{x_0}
\end{equation}
and
\[\Omega_{x_0,t} = [H_{x_0,-t}(\Lambda_t^c)]^c.\]
We note that the notation $H_{x_0,-t}$ is consistent with the one in Definition~\ref{def:SCConv}(1). We will show in the later section that, as in \cite{HallHo2020}, the Brown measure of $x_0+i\sigma_t$ has mass $1$ on the open set $\Omega_{x_0,t}$. 

Most of the statements in the following theorem follow the same proof in \cite[Proposition 5.5]{HallHo2020}. We only supply the proof for the statements that requires a new proof.
\begin{theorem}
	\label{thm:domain}
	The following statements hold.
	\begin{enumerate}
		\item The function $H_{x_0,-t}$ is continuous and injective on $\overline{\Gamma}_t$. (See Definition~\ref{def:SCConv} for the definition of $\Gamma_t$.)
		\item Define the function $f_{x_0,t}:\R\to\R$ by
		\begin{equation}
			\label{eq:at}
			f_{x_0,t}(u_0) = \mathrm{Re}[H_{x_0,-t}(u_0+iv_{x_0,t}(u_0))].
			\end{equation}
		Then at any point $u_0$ such that $v_{x_0,t}(u_0)>0$, the function $f_{x_0,t}$ is differentiable and
		\[0<\frac{df_{x_0,t}}{du_0}<2.\]
		\item The function $f_{x_0,t}$ is continuous and strictly increasing. It maps $\R$ onto $\R$. In particular, the inverse $f_{x_0,t}^{-1}$ of $f_{x_0,t}$ exists.
		\item Define the function
		\begin{equation}
		\label{eq:varphiDef}
		\varphi_{x_0,t}(u) = 2v_{x_0,t}(f_{x_0,t}^{-1}(u)),\quad u\in\R.
		\end{equation}
		Then $\varphi_{x_0,t}(u)\to 0$ as $|u|\to\infty$ and the map $H_{x_0,-t}$ maps the graph of $v_{x_0,t}$ to the graph of $\varphi_{x_0,t}$.
		\item The map $H_{x_0,-t}$ takes the region above the graph of $v_{x_0,t}$ onto the region above the graph of $\varphi_{x_0,t}$.
		\item The set $\Omega_{x_0,t}$ can be identified as
		\[\Omega_{x_0,t} = \left.\{u+iv\in\C\right\vert \left\vert v\right\vert< \varphi_{x_0,t}(u)\}.\]
		\end{enumerate}
	\end{theorem}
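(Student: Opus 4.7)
My approach rests on the identity $H_{x_0,-t}(z) = 2z - H_{x_0,t}(z)$, which follows immediately from $H_{x_0,t}(z) = z + tG_{x_0}(z)$ and~\eqref{eq:H-t}; this links the new map directly to Biane's subordination function from Theorem~\ref{thm:Biane}. For part (1), continuity on $\overline{\Gamma}_{x_0,t}$ follows at once from this identity and the continuity of $H_{x_0,t}$ given by Theorem~\ref{thm:Biane}(1). Specializing to the boundary, where $\psi_{x_0,t}(u_0) = H_{x_0,t}(u_0 + iv_{x_0,t}(u_0))$ is real,
\begin{equation*}
H_{x_0,-t}(u_0 + iv_{x_0,t}(u_0)) = (2u_0 - \psi_{x_0,t}(u_0)) + 2iv_{x_0,t}(u_0),
\end{equation*}
so $f_{x_0,t}(u_0) = 2u_0 - \psi_{x_0,t}(u_0)$ and the imaginary part equals $2v_{x_0,t}(u_0)$. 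Part (2) then follows by differentiating under the integral in $\psi_{x_0,t}$; at points with $v_{x_0,t}(u_0) > 0$ the relevant integrands are bounded by $1/(2v_{x_0,t}(u_0))$ and integrable against the probability measure $\mu_{x_0}$, so the computation from \cite[Proposition 5.5]{HallHo2020} transfers verbatim.

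The key new input for the unbounded setting is an a priori bound good for parts (3) and (4). Writing
\begin{equation*}
f_{x_0,t}(u_0) - u_0 = -t \int \frac{u_0-x}{(u_0-x)^2 + v_{x_0,t}(u_0)^2}\, d\mu_{x_0}(x),
\end{equation*}
Cauchy--Schwarz gives
\begin{equation*}
\left| f_{x_0,t}(u_0) - u_0 \right|^2 \leq t^2 \int\!\frac{(u_0-x)^2\, d\mu_{x_0}(x)}{((u_0-x)^2 + v_{x_0,t}(u_0)^2)^2} \leq t^2 \int\! \frac{d\mu_{x_0}(x)}{(u_0-x)^2 + v_{x_0,t}(u_0)^2} \leq t,
\end{equation*}
where the last inequality uses the defining property of $v_{x_0,t}$. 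Hence $|f_{x_0,t}(u_0) - u_0| \leq \sqrt{t}$ uniformly in $u_0 \in \R$. Together with continuity and the strict monotonicity from part (2), this forces $f_{x_0,t}(u_0) \to \pm\infty$ as $u_0 \to \pm\infty$ and yields part (3). For the limit in part (4), I use dominated convergence: for any fixed $v > 0$, the integrand $1/((u_0-x)^2 + v^2)$ is dominated by $1/v^2 \in L^1(\mu_{x_0})$ and tends pointwise to $0$ as $|u_0| \to \infty$, so $\int d\mu_{x_0}/((u_0-x)^2 + v^2) \to 0$, which forces $v_{x_0,t}(u_0) \to 0$ and hence $\varphi_{x_0,t}(u) = 2v_{x_0,t}(f_{x_0,t}^{-1}(u)) \to 0$ as $|u|\to\infty$. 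That $H_{x_0,-t}$ sends the graph of $v_{x_0,t}$ to the graph of $\varphi_{x_0,t}$ is now just the boundary identity above, read off in the new coordinate $u = f_{x_0,t}(u_0)$.

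The injectivity claim in part (1) and parts (5)--(6) I would settle by the standard complex-analytic argument from \cite[Proposition 5.5]{HallHo2020}. Injectivity on $\Gamma_{x_0,t}$ follows from holomorphy of $H_{x_0,-t}$ there, together with the just-established injectivity of the boundary map $u_0 \mapsto H_{x_0,-t}(u_0+iv_{x_0,t}(u_0))$, via an argument-principle/open-mapping argument; part (5) is the open mapping theorem combined with the boundary correspondence; and part (6) is an unwinding of $\Omega_{x_0,t} = [H_{x_0,-t}(\Lambda_{x_0,t}^c)]^c$ using the identification of $\Lambda_{x_0,t}^c$ from Proposition~\ref{prop:LambdatId} and the reflection symmetry $H_{x_0,-t}(\bar z) = \overline{H_{x_0,-t}(z)}$. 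The main technical obstacle I foresee is handling continuity of $f_{x_0,t}$ at boundary points $u_0$ where $v_{x_0,t}$ transitions between zero and positive values, particularly when such $u_0$ lies in the support of an unbounded $\mu_{x_0}$ and $\int d\mu_{x_0}/(u_0-x)^2$ may diverge; the uniform bound $|f_{x_0,t}(u_0) - u_0| \leq \sqrt{t}$ together with one-sided limits from the region $v_{x_0,t} > 0$ should resolve this.
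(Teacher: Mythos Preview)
Your proposal is correct and takes a genuinely different, more elementary route for the surjectivity in part~(3). The paper argues indirectly: it shows $v_{x_0,t}(u_0)\to 0$ and $H_{x_0,-t}(iy)\to\infty$, then views the graph of $v_{x_0,t}$ together with $\infty$ as a Jordan curve on the Riemann sphere, and uses that $H_{x_0,-t}$ extends to a homeomorphism of the closed region; if $f_{x_0,t}$ had a finite limit $a$ or $b$, the image curve could not enclose a region, a contradiction. Your Cauchy--Schwarz estimate $|f_{x_0,t}(u_0)-u_0|\leq\sqrt{t}$ bypasses this topology entirely and gives surjectivity in one line. The trade-off is that the paper's Jordan-curve argument simultaneously delivers part~(5) (the ``onto'' statement for the region above the graph), whereas you defer that to the bounded-case argument; this is fine, but note that the unboundedness of the domain is precisely why the paper felt a new proof was needed there, so you should check that your boundary correspondence plus $\varphi_{x_0,t}(u)\to 0$ really suffices to run the open-mapping/argument-principle step globally.

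One remark: the ``technical obstacle'' you flag at the end---continuity of $f_{x_0,t}$ at transition points of $v_{x_0,t}$---is in fact already dissolved by your own identity $f_{x_0,t}(u_0)=2u_0-\psi_{x_0,t}(u_0)$, since Theorem~\ref{thm:Biane}(3) asserts that $\psi_{x_0,t}$ is a homeomorphism of $\R$, hence continuous everywhere without any case analysis. You can simply invoke this and drop the caveat.
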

	\begin{proof}
		Points 1 and 2 follow the same proof as in \cite[Proposition 5.5]{HallHo2020}. The argument in \cite[Proposition 5.5]{HallHo2020} also shows that $f_{x_0,t}$ is continuous and strictly increasing. We now prove that $f_{x_0,t}$ maps $\R$ \emph{onto} $\R$. We can write $f_{x_0,t}$ as
		\[f_{x_0,t}(u_0) = u_0 - t\int\frac{u_0-x}{(u_0-x)^2+v_t(u_0)^2}d\mu(x).\]
		By the Cauchy--Schwarz inequality,
		\begin{align*}
			&\left\vert\int\frac{u_0-x}{(u_0-x)^2+v_t(u_0)^2}d\mu(x)\right\vert^2\\
			&\leq \int\frac{(u_0-x)^2}{(u_0-x)^2+v_t(u_0)^2}d\mu(x)\int\frac{d\mu(x)}{(u_0-x)^2+v_t(u_0)^2}\leq \frac{1}{t}.
		\end{align*}
		It follows that $\lim_{u_0\to-\infty}f_{x_0,t}(u_0) = -\infty$ and $\lim_{u_0\to\infty}f_{x_0,t}(u_0)=\infty$. Since $f_{x_0,t}$ is increasing, it maps $\R$ onto $\R$. 
		
		For Point 4, it is clear that $H_{x_0,-t}$ maps the graph of $v_{x_0,t}$ to the graph of a function $\varphi_{x_0,t}$ defined on the range of $f_{x_0,t}$ using the same formula in~\eqref{eq:varphiDef}. To show that $\varphi_{x_0,t}(u)\to 0$ as $\vert u\vert\to\infty$, it suffices to show that $v_{x_0,t}(u_0)\to 0$ as $|u_0|\to\infty$ since $\vert f_{x_0,t}^{-1}(u)\vert\to\infty$ as $\vert u\vert\to\infty$. Suppose, on the contrary, that there exist $\delta>0$ and a sequence $u_n\in\R$, such that $|u_n|\to\infty$ but $v_{x_0,t}(u_n)>\delta$ for all $n$. For all of these $u_n$, since $v_{x_0,t}(u_n)>\delta>0$, we must have
		\[\int\frac{d\mu_{x_0}(x)}{(u_n-x)^2+\delta^2}\geq \int\frac{d\mu_{x_0}(x)}{(u_n-x)^2+v_{x_0,t}(u_n)^2}=\frac{1}{t}.\]
		Letting $n\to\infty$, by dominated convergence theorem, we have $0\geq 1/t>0$, which is a contradiction. Thus, $v_{x_0,t}(u_0)\to 0$ as $|u_0|\to\infty$.
		 
		Finally, Points 5 and 6 follow the same proof as in \cite[Proposition 5.5]{HallHo2020}.
		\end{proof}
	\subsection{Surjectivity}
	We can write $f_{x_0,t}$ into the form
	\begin{equation}
		\label{eq:fx0t}
		f_{x_0,t}(u_0)=\mathrm{Re}[H_{x_0,-t}(u_0+iv_{x_0,t}(u_0)] = t\int\frac{x\,d\mu_{x_0}(x)}{(u_0-x)^2+v_{x_0,t}(u_0)^2}.
		\end{equation}
	Denote by $\lambda(t;\lambda_0,\varepsilon_0)$ and $\varepsilon(t;\lambda_0,\varepsilon_0)$ be the solution of~\eqref{eq:ODE}, with initial conditions $\lambda_0, \varepsilon_0$. By~\eqref{eq:fx0t} and Proposition~\ref{prop:limitase}, for any $\lambda_0=u_0+iv_0\in\Lambda_{x_0,t}$, 
	\begin{equation}
		\label{eq:surjMap}
		\lim_{\varepsilon_0\to \varepsilon_0^t(\lambda_0)^+} \lambda(t;\lambda_0,\varepsilon_0)= f_{x_0,t}(u_0)+2iv_0,
		\end{equation}
	which is in $\Omega_{x_0,t}$ since $2|v_0|< \varphi_{x_0,t}(f_{x_0,t}(u_0))$, by Point 4 of Theorem~\ref{thm:domain}. The following theorem shows that every point in $\Omega_{x_0,t}$ can be obtained from $\Lambda_{x_0,t}$ through the formula~\eqref{eq:surjMap}. The proof in \cite[Theorem 7.3]{HallHo2020} still works for our case.
	
	\begin{theorem}
		\label{thm:surj}
		Let 
		\[U_t(u_0+iv_0) = f_{x_0,t}(u_0)+2iv_0, \quad u_0+iv_0\in\Lambda_{x_0,t}\]
		which is the same as the formula~\eqref{eq:surjMap}. Then we have the following results.
		\begin{enumerate}
			\item The map $U_t$ extends continuously to $\overline{\Lambda}_{x_0,t}$. This extension is the unique continuous map of $\overline{\Lambda}_{x_0,t}$ into $\overline{\Omega}_{x_0,t}$ that agrees with $H_{x_0,-t}$ on $\partial\Lambda_{x_0,t}$ and maps each vertical segment in $\overline{\Lambda}_{x_0,t}$ linearly to a vertical segment in $\overline{\Omega}_{x_0,t}$.
			\item The map $U_t$ is a homeomorphism from $\Lambda_{x_0,t}$ onto $\Omega_{x_0,t}$.
			\end{enumerate} 
		\end{theorem}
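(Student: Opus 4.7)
The plan is to take the explicit formula $U_t(u_0+iv_0) = f_{x_0,t}(u_0) + 2iv_0$ as the candidate continuous extension to $\overline{\Lambda}_{x_0,t}$, and to reduce both parts of the theorem to the one-variable properties of $f_{x_0,t}$ assembled in Theorem~\ref{thm:domain}. Since $f_{x_0,t}$ is a continuous strictly increasing bijection of $\R$ onto $\R$ by Theorem~\ref{thm:domain}(3), this formula defines a continuous map on all of $\overline{\Lambda}_{x_0,t}$ with no further work.

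For part (1), the substantive point is to verify that this extension actually equals $H_{x_0,-t}$ on $\partial\Lambda_{x_0,t}$. At an upper boundary point $u_0+iv_{x_0,t}(u_0)$ with $v_{x_0,t}(u_0)>0$, the agreement of real parts is just the definition~\eqref{eq:at} of $f_{x_0,t}$. For the imaginary part, I would combine $H_{x_0,-t}(z) = z - tG_{x_0}(z)$ with the defining identity
\[
\int \frac{d\mu_{x_0}(x)}{(u_0-x)^2+v_{x_0,t}(u_0)^2} = \frac{1}{t}
\]
to obtain $\mathrm{Im}\,H_{x_0,-t}(u_0+iv_{x_0,t}(u_0)) = v_{x_0,t}(u_0) + v_{x_0,t}(u_0) = 2v_{x_0,t}(u_0)$, which matches $\mathrm{Im}\,U_t$. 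The lower boundary is handled by complex-conjugation symmetry, and at points with $v_{x_0,t}(u_0)=0$ both maps reduce to $H_{x_0,-t}$ on $\R$ by continuity on $\overline{\Gamma}_t$ from Theorem~\ref{thm:domain}(1). Uniqueness then follows because any continuous extension that is linear in $v_0$ on each vertical segment of $\overline{\Lambda}_{x_0,t}$ is completely determined by its two endpoint values on $\partial\Lambda_{x_0,t}$.

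For part (2), injectivity is immediate: equating the imaginary parts of $U_t(u_0+iv_0)$ and $U_t(u_0'+iv_0')$ forces $v_0 = v_0'$, after which equating real parts and using the strict monotonicity of $f_{x_0,t}$ gives $u_0 = u_0'$. For surjectivity onto $\Omega_{x_0,t}$, I would invoke the characterization $\Omega_{x_0,t}=\{u+iv : |v|<\varphi_{x_0,t}(u)\}$ together with $\varphi_{x_0,t}(u)=2v_{x_0,t}(f_{x_0,t}^{-1}(u))$ from Theorem~\ref{thm:domain}(4),(6): given $u+iv\in\Omega_{x_0,t}$, setting $u_0 = f_{x_0,t}^{-1}(u)$ and $v_0 = v/2$ places $u_0+iv_0$ in $\Lambda_{x_0,t}$ and gives $U_t(u_0+iv_0) = u+iv$. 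Continuity of the inverse is immediate from the explicit formula $U_t^{-1}(u+iv) = f_{x_0,t}^{-1}(u) + iv/2$, since $f_{x_0,t}^{-1}$ is continuous as the inverse of a continuous strictly increasing surjection on $\R$.

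The only computational obstacle is the boundary imaginary-part identity for $H_{x_0,-t}$, where the factor $2$ appearing in front of $iv_0$ in the definition of $U_t$ is pinned down by the defining equation for $v_{x_0,t}$; all other assertions are formal consequences of the one-variable facts about $f_{x_0,t}$, $v_{x_0,t}$, and $\varphi_{x_0,t}$ already in hand, so the proof of \cite[Theorem 7.3]{HallHo2020} transfers verbatim to the present unbounded setting.
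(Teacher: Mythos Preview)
Your proposal is correct and follows essentially the same approach as the paper, which simply notes that the proof of \cite[Theorem~7.3]{HallHo2020} carries over unchanged; you have spelled out that argument explicitly, correctly identifying the boundary imaginary-part computation $\mathrm{Im}\,H_{x_0,-t}(u_0+iv_{x_0,t}(u_0))=2v_{x_0,t}(u_0)$ as the only nontrivial step and reducing everything else to the one-variable facts about $f_{x_0,t}$ and $\varphi_{x_0,t}$ from Theorem~\ref{thm:domain}.
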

	
The following proposition gives bounds on the real parts of points in $\Omega_{x_0,t}$. 
	\begin{proposition}
		Let $M = \sup\mathrm{supp}(\mu)$ and $m =\inf\mathrm{supp}(\mu)$. Then every point $\lambda\in\overline{\Omega}_{x_0,t}$ satisfies
		\[m<\mathrm{Re}\lambda<M.\]
		\end{proposition}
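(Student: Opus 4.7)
The strategy is to translate the question via the homeomorphism $U_t\colon\overline{\Lambda}_{x_0,t}\to\overline{\Omega}_{x_0,t}$ of Theorem~\ref{thm:surj}. Since $U_t$ maps vertical segments linearly as $U_t(u_0+iv_0)=f_{x_0,t}(u_0)+2iv_0$, any $\lambda\in\overline{\Omega}_{x_0,t}$ satisfies $\mathrm{Re}(\lambda)=f_{x_0,t}(u_0)$ for some $(u_0,v_0)\in\overline{\Lambda}_{x_0,t}$. It therefore suffices to show $m<f_{x_0,t}(u_0)<M$. Both inequalities are automatic when $m=-\infty$ or $M=+\infty$, so we may assume both are finite.

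When $v_{x_0,t}(u_0)>0$, the expression~\eqref{eq:fx0t} together with the normalization $t\int\frac{d\mu_{x_0}(x)}{(u_0-x)^2+v_{x_0,t}(u_0)^2}=1$ yields
\[
f_{x_0,t}(u_0)-m=t\int\frac{x-m}{(u_0-x)^2+v_{x_0,t}(u_0)^2}\,d\mu_{x_0}(x),\qquad M-f_{x_0,t}(u_0)=t\int\frac{M-x}{(u_0-x)^2+v_{x_0,t}(u_0)^2}\,d\mu_{x_0}(x).
\]
Both integrands are nonnegative on $\mathrm{supp}(\mu_{x_0})\subset[m,M]$ and each vanishes at only one endpoint; since Assumption~\ref{assump:standing} forbids $\mu_{x_0}$ from being a Dirac mass, both integrals are strictly positive, giving $m<f_{x_0,t}(u_0)<M$. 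This handles every interior point of $\overline{\Omega}_{x_0,t}$ as well as boundary points $U_t(u_0\pm iv_{x_0,t}(u_0))$ arising from $u_0$ with $v_{x_0,t}(u_0)>0$.

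For the remaining boundary case $v_{x_0,t}(u_0)=0$ with $(u_0,0)\in\overline{\Lambda}_{x_0,t}$, choose a sequence $(u_n,v_n)\in\Lambda_{x_0,t}$ converging to $(u_0,0)$; by continuity of $v_{x_0,t}$, one has $v_{x_0,t}(u_n)>0$ and $v_{x_0,t}(u_n)\to 0$. The hypothesis $v_{x_0,t}(u_0)=0$ forces $\int d\mu_{x_0}(x)/(u_0-x)^2\leq 1/t<\infty$, so $u_0$ is not an atom of $\mu_{x_0}$, and the integrand $(M-x)/((u_n-x)^2+v_{x_0,t}(u_n)^2)$ converges $\mu_{x_0}$-a.e.\ to $(M-x)/(u_0-x)^2$. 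Fatou's lemma combined with continuity of $f_{x_0,t}$ then gives
\[
M-f_{x_0,t}(u_0)=\lim_{n\to\infty}\bigl(M-f_{x_0,t}(u_n)\bigr)\geq t\int\frac{M-x}{(u_0-x)^2}\,d\mu_{x_0}(x),
\]
and the right hand side is strictly positive (possibly $+\infty$) because $\mu_{x_0}$ is not concentrated at $M$. The symmetric argument gives $f_{x_0,t}(u_0)>m$. The delicate step is this boundary case: strictness is recovered through Fatou, and the combined facts that $v_{x_0,t}(u_0)=0$ prevents a point mass at $u_0$ and that Assumption~\ref{assump:standing} excludes point masses at $m$ or $M$ are precisely what keep the Fatou lower bound from collapsing to zero.
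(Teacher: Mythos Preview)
Your proof is correct and follows the natural approach: express $\mathrm{Re}\,\lambda$ as $f_{x_0,t}(u_0)$ via the homeomorphism $U_t$, and use the representation~\eqref{eq:fx0t} together with the normalization $t\int\frac{d\mu_{x_0}}{(u_0-x)^2+v_{x_0,t}(u_0)^2}=1$ to write $f_{x_0,t}(u_0)$ as a genuine barycenter of $x\in[m,M]$. This is essentially the argument of \cite[Proposition~7.4]{HallHo2020} that the paper invokes; the paper gives no independent proof here and only remarks that $m$ or $M$ may now be infinite, in which case the corresponding inequality is vacuous.

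One small point of phrasing: in the last paragraph you write that Assumption~\ref{assump:standing} ``excludes point masses at $m$ or $M$.'' The assumption actually allows atoms at $m$ or $M$; what it excludes is $\mu_{x_0}$ being \emph{entirely} a Dirac mass. That weaker statement is exactly what your argument uses (it guarantees $\mu_{x_0}(\{x:x<M\})>0$, hence the Fatou lower bound is strictly positive), so the logic is intact---only the wording should be adjusted. Your Fatou step for the degenerate boundary points $v_{x_0,t}(u_0)=0$ is a clean way to recover strict inequality in the limit; note also that since $M-f_{x_0,t}(u_0)$ is automatically finite, the Fatou bound forces $\int\frac{M-x}{(u_0-x)^2}\,d\mu_{x_0}<\infty$ a posteriori, so the ``possibly $+\infty$'' caveat is harmless.
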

	\begin{proof}
		The proof of \cite[Proposition 7.4]{HallHo2020} works for this proposition. The difference from \cite[Proposition 7.4]{HallHo2020} is that $M$ and $m$ may not be both finite --- could be one infinite and one finite, or both infinite.
		\end{proof}

\subsection{The Brown measure computation}
In this section, we compute the Brown measure of $x_0+i\sigma_t$ by taking the Laplacian of $S$ in $\Omega_{x_0,t}$. By the definition of Brown measure (See Section~\ref{sect:BrownDef}), the Laplacian is in distributional sense. The following regularity result tells us that we can indeed take the ordinary Laplacian to compute the density of the Brown measure. The proof can be found in \cite[Proposition 7.5]{HallHo2020}
\begin{proposition}
	\label{prop:regularity}
	Define the function $\tilde{S}$ by
	\begin{equation}
		\label{eq:Stilde}
		\tilde{S}(t,\lambda,z) = S(t,\lambda,z^2)
		\end{equation}
	for $z>0$. Then for any $t>0$ and $\lambda^*\in\Omega_{x_0,t}$, the function
	\[(\lambda,z)\mapsto \tilde{S}(t,\lambda,z)\]
	extends to a real analytic function in a neighborhood of $(\lambda^*,0)$ in $\C\times \R$.
	\end{proposition}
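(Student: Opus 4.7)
The plan is to realize $\tilde{S}$ as a composition of real-analytic functions through the Hamilton--Jacobi formula in Proposition~\ref{prop:HJformulas}. Let $\lambda^*\in\Omega_{x_0,t}$. By Theorem~\ref{thm:surj} there is a unique $\lambda_0^* = u_0^* + iv_0^* \in \Lambda_{x_0,t}$ with $U_t(\lambda_0^*) = \lambda^*$, and since $\lambda_0^*\in\Lambda_{x_0,t}$ we have $(v_0^*)^2 < v_{x_0,t}(u_0^*)^2$, so $\varepsilon_0^* := \varepsilon_0^t(\lambda_0^*) > 0$; set $s^* = \sqrt{\varepsilon_0^*}>0$. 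The key observation is that Proposition~\ref{prop:ODEsol} gives $\varepsilon(t) = \varepsilon_0(1-tp_0)^2$, a perfect square. I introduce the signed quantity
\[\tilde z(t) = s(1 - tp_0),\qquad \varepsilon_0 = s^2,\]
so that $\tilde z(t)^2 = \varepsilon(t)$ and $\tilde z(t)$ is real-analytic in $(\lambda_0,s)$ through $s = s^*$ (where $p_0 = 1/t$), with no square-root singularity --- the function $\tilde z$ simply changes sign.

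Define $\Phi:(\lambda_0,s)\mapsto (\lambda(t), \tilde z(t))$ using the position formulas from Proposition~\ref{prop:ODEsol}. In a complex neighborhood of $(\lambda_0^*, s^*)$, the integrands in~\eqref{eq:momenta} defining $p_0,p_{u,0},p_{v,0}$ have denominators bounded away from zero (since $(v_0^*)^2 + (s^*)^2 = v_{x_0,t}(u_0^*)^2 > 0$), and Assumption~\ref{assump:standing} together with dominated convergence justifies differentiation under the integral of all orders; hence $\Phi$ is real analytic near $(\lambda_0^*, s^*)$. I would next check that $\Phi$ is a local real-analytic diffeomorphism onto a neighborhood of $(\lambda^*, 0)$: a direct calculation gives
\[\frac{\partial \tilde z}{\partial s}\bigg|_{(\lambda_0^*, s^*)} = 2(s^*)^2\, t\int\frac{d\mu_{x_0}(x)}{((u_0^*-x)^2+v_{x_0,t}(u_0^*)^2)^2} > 0,\]
while the restriction of $\Phi$ to the slice $\{s = s^*(\lambda_0)\}$ reduces, after accounting for the $s$-column, to the differential of $U_t$, which is nondegenerate thanks to the bound $0 < df_{x_0,t}/du_0 < 2$ from Theorem~\ref{thm:domain}(2). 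This Jacobian computation is the same one carried out in \cite{HallHo2020} for the bounded case; its dependence on $\mu_{x_0}$ is only through the same integrals, so it transfers verbatim.

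Once $\Phi^{-1}$ is real analytic in a neighborhood of $(\lambda^*, 0)$, Proposition~\ref{prop:HJformulas} yields
\[\tilde S(t,\lambda,z) = S(t,\lambda, z^2) = S(0,\lambda_0,s^2) + t H_0(\lambda_0,s^2),\qquad (\lambda_0,s)=\Phi^{-1}(\lambda,z),\]
valid for $z>0$ small and extended by the same formula to a full neighborhood of $(\lambda^*, 0)$ in $\C\times\R$. The initial-condition functional
\[S(0,\lambda_0,\varepsilon_0) = \int \log((u_0-x)^2+v_0^2+\varepsilon_0)\, d\mu_{x_0}(x)\]
is finite by Assumption~\ref{assump:standing} (logarithmic growth at infinity is integrable), and each of its partial derivatives in $(\lambda_0,\varepsilon_0)$ is dominated by an integrable rational function of $x$, so $S(0,\cdot,\cdot)$ extends to a holomorphic function on $\{v_0^2+\varepsilon_0 > 0\}\subset\C^3$ and is in particular real analytic; the same applies to $H_0$. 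Composing with $\Phi^{-1}$ produces the claimed real-analytic extension.

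The main obstacle is the Jacobian nondegeneracy of $\Phi$ at $(\lambda_0^*, s^*)$: one must verify that the perfect-square structure of $\varepsilon(t)$ makes $\tilde z$ an analytic coordinate transverse to the tangent map of $U_t$. Everything else is composition of real-analytic maps; the only genuinely new ingredient compared to \cite{HallHo2020} is controlling the tails of $\mu_{x_0}$ when differentiating under the integral, which is precisely what Assumption~\ref{assump:standing} delivers.
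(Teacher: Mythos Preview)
Your proposal is correct and follows exactly the approach of \cite{HallHo2020}, which is precisely what the paper relies on here: the paper states Proposition~\ref{prop:regularity} without proof, tacitly invoking the bounded-case argument. Your sketch faithfully reproduces that argument---the perfect-square structure $\varepsilon(t)=\varepsilon_0(1-tp_0)^2$ leading to the analytic coordinate $\tilde z=s(1-tp_0)$, the Jacobian computation showing $\Phi$ is a local diffeomorphism, and the Hamilton--Jacobi representation of $\tilde S$---while correctly noting that the only adjustment needed for unbounded $x_0$ is the tail control furnished by Assumption~\ref{assump:standing} when differentiating under the integral sign and when establishing finiteness and analyticity of $S(0,\lambda_0,\varepsilon_0)$.
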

Let 
\[s_t(\lambda) = \lim_{\varepsilon\to 0^+}S(t,\lambda,\varepsilon) = \lim_{\varepsilon\to 0^+}\tilde{S}(t,\lambda,\sqrt{\varepsilon})\]
where $\tilde{S}$ is defined in~\eqref{eq:Stilde}. The computation of the Laplacian $s_t$ is the same as in \cite{HallHo2020}. Since the computation of the Laplacian is crucial for the computation of the Brown measure, for completeness, we also include the computation of the Laplacian here.

Write $\lambda(t;\lambda_0,\varepsilon_0)$ be the solution of the system~\eqref{eq:ODE} with initial conditions $\lambda_0$ and $\varepsilon_0$.  By~\eqref{eq:surjMap} and Theorem~\ref{thm:surj}, for any $\lambda=u+iv\in\Lambda_{x_0,t}$, $\lambda_0 =u_0+iv_0 = U_t^{-1}(\lambda)$ satisfies \[\lim_{\varepsilon_0\to \lambda_0^t(\lambda_0)^+}\lambda(t;\lambda_0,\varepsilon_0) = \lambda.\] 
Then, by Propositions~\ref{prop:HJformulas} and~\ref{prop:regularity}, for any $\lambda=u+iv\in\Omega_{x_0,t}$,
\[\frac{\partial}{\partial u}s_t(\lambda) = \lim_{\varepsilon\to 0^+} \frac{\partial}{\partial u}\tilde{S}(t,\lambda, \varepsilon)=\lim_{\varepsilon_0\to \varepsilon_0^t(\lambda_0)^+}p_{u}(t) = \lim_{\varepsilon_0\to\varepsilon_0^t(\lambda_0)^+}p_{u,0}.\]
Using~\eqref{eq:momenta}, \eqref{eq:fx0t}, and Proposition~\ref{prop:limitase}(2).
\begin{align*}
	&\lim_{\varepsilon_0\to\varepsilon_0^t(\lambda_0)^+}p_{u,0} = \int\frac{2(u_0-x)\,d\mu_{x_0}(x)}{(u_0-x)^2+v_{x_0,t}(u_0)^2}\\
	&=\frac{2u_0}{t}-\frac{2}{t}\cdot t\int\frac{x\,d\mu_{x_0}(x)}{(u_0-x)^2+v_{x_0,t}(u_0)^2}\\
	& = \frac{2f_{x_0,t}^{-1}(u)}{t}-\frac{2u}{t}.
\end{align*}
Hence, we have
\begin{equation}
	\label{eq:uDer}
	\frac{\partial s_t}{\partial u} = \frac{2f_{x_0,t}^{-1}(u)}{t}-\frac{2u}{t}, \quad \lambda=u+iv\in\Omega_{x_0,t}.
	\end{equation}
Similarly, since $v = 2v_0$ by~\eqref{eq:surjMap}, for all $\lambda=u+iv\in\Omega_{x_0,t}$,
\begin{equation}
	\label{eq:vDer}
	\frac{\partial}{\partial v}s_t(\lambda) = \lim_{\varepsilon_0\to\varepsilon_0^t(\lambda_0)^+}p_{v,0} = \int\frac{2v_0\,d\mu_{x_0}(x)}{(u_0-x)^2+v_{x_0,t}(u_0)^2} = \frac{v}{t}.
\end{equation}
Combining \eqref{eq:uDer} and \eqref{eq:vDer}, we have
\[\Delta s_t(\lambda) =\frac{2}{t}\frac{d}{du}f_{x_0,t}^{-1}(u)-\frac{1}{t}=\frac{2}{t}\left(\frac{d}{du}f_{x_0,t}^{-1}(u)-\frac{1}{2}\right).\]

We arrive at the Brown measure of $x_0+i\sigma_t$ as well as a push-forward result of the Brown measure. 
\begin{theorem}
	\label{thm:a+ist}
	The open set $\Omega_{x_0,t}$ is a set of full measure for the Brown measure of $x_0+i\sigma_t$. The Brown measure is absolutely continuous with a strictly positive density $w_t$ on the open set $\Omega_{x_0,t}$ by
	\[w_t(\lambda) = \frac{1}{2\pi t}\left(\frac{d}{du}f_{x_0,t}^{-1}(u)-\frac{1}{2}\right),
	\quad \lambda=u+iv\in\Omega_{x_0,t}.\]
	In particular, the density $w_t$ is constant along vertical segments inside $\Omega_{x_0,t}$. Moreover, the push-forward of the Brown measure of $x_0+i\sigma_t$ by
	\[Q_t(u+iv) = 2f_{x_0,t}^{-1}(u)-u,\quad u+iv\in\Omega_{x_0,t}\]
	is the law of $x_0+\sigma_t$.
	\end{theorem}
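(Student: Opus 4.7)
The density formula and strict positivity fall out immediately from the Laplacian computation performed just above: Brown measure equals $\frac{1}{4\pi}\Delta s_t$ as a distribution, Proposition~\ref{prop:regularity} lets us read this off as a classical Laplacian on $\Omega_{x_0,t}$, and the computed expression for $\Delta s_t$ combined with the bound $0 < f_{x_0,t}' < 2$ from Theorem~\ref{thm:domain}(2) yields both the displayed formula for $w_t$ and its strict positivity.

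To show that $\Omega_{x_0,t}$ has full measure, the plan is to prove $s_t$ is harmonic on $\Omega_{x_0,t}^c$. Fix $\lambda_0 \in \Lambda_{x_0,t}^c$; Proposition~\ref{prop:limitase}(1) says the Hamiltonian solution exists on all of $[0,t]$ for every $\varepsilon_0 > 0$, with $\lambda(t;\lambda_0,\varepsilon_0) \to H_{x_0,-t}(\lambda_0)$ and $\varepsilon(t;\lambda_0,\varepsilon_0) \to 0$ as $\varepsilon_0 \to 0^+$. Feeding this into the Hamilton--Jacobi identity of Proposition~\ref{prop:HJformulas} and letting $\varepsilon_0 \to 0^+$ --- a dominated-convergence argument legitimised by Assumption~\ref{assump:standing}, in the same spirit as Lemma~\ref{lem:Sptwise} --- produces an explicit formula for $s_t$ on $\Omega_{x_0,t}^c$ of the form $\int \log |\lambda_0 - x|^2 \, d\mu_{x_0}(x)$ plus the real part of a holomorphic function of $\lambda_0$ coming from the limiting value of $tH_0$. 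Since $\mathrm{supp}\,\mu_{x_0} \subset \overline{\Lambda_{x_0,t}}$ by Proposition~\ref{prop:LambdatId}, the log-integral is harmonic in $\lambda_0$ off $\overline{\Lambda_{x_0,t}}$. Pulling back through the biholomorphism $H_{x_0,-t}:\Lambda_{x_0,t}^c \to \Omega_{x_0,t}^c$ transfers harmonicity to $s_t$ on $\Omega_{x_0,t}^c$, so its distributional Laplacian vanishes there.

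For the push-forward, I observe that $w_t$ depends only on $u$, so $Q_t$ collapses each vertical segment in $\Omega_{x_0,t}$ to a single point on $\R$. Reparametrising $u = f_{x_0,t}(u_0)$, equation~\eqref{eq:fx0t} together with the defining identity $t\int d\mu_{x_0}(x)/((u_0-x)^2 + v_{x_0,t}(u_0)^2) = 1$ on $\Lambda_{x_0,t}$ give $\psi_{x_0,t}(u_0) = 2u_0 - f_{x_0,t}(u_0)$, so $Q_t \circ U_t(u_0 + iv_0) = \psi_{x_0,t}(u_0)$. The push-forward density at $y = \psi_{x_0,t}(u_0)$ is then the integral of $w_t$ along the vertical segment of length $2\varphi_{x_0,t}(u) = 4v_{x_0,t}(u_0)$, reexpressed as a function of $y$ via $dy/du_0 = 2 - f_{x_0,t}'(u_0)$ and $du_0/du = 1/f_{x_0,t}'(u_0)$; the Jacobian factors telescope and leave $v_{x_0,t}(u_0)/(\pi t)$, matching $p_{x_0,t}(\psi_{x_0,t}(u_0))$ in Theorem~\ref{thm:Biane}(3). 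As a byproduct this confirms that the Brown mass on $\Omega_{x_0,t}$ is $1$.

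The main obstacle is the harmonicity step on $\Omega_{x_0,t}^c$. In the bounded setting of \cite{HallHo2020} one recognises the limiting $s_t$ as twice the logarithm of the Fuglede--Kadison determinant of $x_0 - \lambda_0$, visibly harmonic off the spectrum. For unbounded $x_0 \in \A^\Delta$ the same identification goes through only because Assumption~\ref{assump:standing} controls $\int \log^+|\lambda_0 - x| \, d\mu_{x_0}(x)$, and the $\varepsilon_0 \to 0^+$ limits of the initial momenta $p_{u,0}, p_{v,0}, p_0$ and of $H_0$ must be taken with the noncommutative-integration care already deployed in Section~\ref{sect:DiffEq}.
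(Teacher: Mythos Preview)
Your proposal is correct and follows essentially the same route as the paper, which simply defers to Theorems~7.9 and~8.2 of \cite{HallHo2020} together with Biane's theorem in the unbounded setting; you have spelled out those arguments explicitly, including both the harmonicity of $s_t$ on $\Omega_{x_0,t}^c$ and the mass-one verification via the push-forward computation. One small overstatement: the $\varepsilon_0\to0^+$ limits of the initial momenta and of $H_0$ are ordinary integrals against $\mu_{x_0}$, so classical dominated convergence suffices there and no noncommutative-integration machinery is needed beyond what was already used to establish the PDE.
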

\begin{proof}
	These results follow from the same proof as for Theorems 7.9 and 8.2 of \cite{HallHo2020} since Biane's Theorem (Theorem~\ref{thm:Biane}) on the free convolution with a semicircular variable holds for unbounded self-adjoint random variable $x_0\in\A^\Delta$.
	\end{proof}

Theorem 8.2 of \cite{HallHo2020} also establishes a push-forward property of the Brown measure of $x_0+i\sigma_t$ to the Brown measure of $x_0+c_t$ where $c_t$ is the circular variable with variance $t$. We have not computed the Brown measure of $x_0+c_t$ for unbounded $x_0$. We will establish this analogous push-forward result to $x_0+c_t$ in the next section.

\section{The Brown measure of $x_0+c_{\alpha,\beta}$\label{sect:ellipse}}
Recall that an elliptic variable has the form
\[c_{\alpha,\beta} = \tilde{\sigma}_\alpha+i\sigma_\beta\]
where $\tilde{\sigma}_\alpha$ and $\sigma_\beta$ are free semicircular random variables with variances $\alpha$ and $\beta$ respectively. Given any self-adjoint $a\in\tilde{\A}$, let
\[\Lambda_{a,s} = \{\left.u+iv\in\C\right\vert |v|<v_{a,s}(u)\},\]
where $v_{a,s}$ is defined in Definition~\ref{def:SCConv}(2). We note that this definition of $\Lambda_{a,s}$ is consistent with the notation in the previous section by Proposition~\ref{prop:LambdatId}.

In this section, we study the Brown measure of $x_0+c_{\alpha,\beta}$, where $x_0\in\A^\Delta$ is self-adjoint, and $\alpha\geq 0$ (the case $\alpha=0$ agrees with the one computed in Theorem~\ref{thm:a+ist}). The computation in \cite{Ho2020} relies only on the computations of holomorphic functions; the compactness of the support of $\mu_{x_0}$ in \cite{Ho2020} does not play a role (since the computation of the holomorphic maps can be restricted to a Stolz angle). The Brown measure computation of \cite{Ho2020} can be automatically carried over to the unbounded self-adjoint $x_0$.

We start by a few notations; these notations are well-defined for $a\in\tilde{\A}$. Later we will apply these notations to $x_0\in\A^\Delta$ since the Brown measure of $x_0+c_{\alpha,\beta}$ is defined only for $x_0\in\A^\Delta$ but not for $x_0\in\tilde{\A}$. More discussions can be found in Proposition 3.6 of~\cite{Ho2020}.
\begin{definition}\label{def:ForEllipse}
	\begin{enumerate}
		\item Given any self-adjoint $a\in\tilde{\A}$ and $r\in\R$. Define 
		\[H_{a,r}(z) = z + r G_a(z),\quad z\not\in\sigma(a).\]
		This definition is consistent with~\eqref{eq:Htdef}, when $r>0$, and consistent with~\eqref{eq:H-t}, when $r<0$. Furthermore, $H_{a,r}$ is an injective conformal map on $\Lambda_{a,|r|}^c$.
		\item Let $a\in\tilde{\A}$ be self-adjoint. Also let $\alpha\geq 0$ and $\beta>0$ and write $s=\alpha+\beta$. Define $f_{a,\alpha,\beta}:\R\to\R$ by
		\[f_{a,\alpha,\beta}(u) = \mathrm{Re}[H_{\alpha-\beta}(u+iv_{x_0,s}(u))].\]

		This function is strictly increasing and is a homeomorphism onto $\R$. Furthermore, $f_{a,\alpha,\beta}'(u)>0$ for all $u\in\Lambda_{a,s}\cap\R$.
		\item Let $\alpha> 0$ and $\beta>0$ and write $s=\alpha+\beta$. Define \[\varphi_{x_0,\alpha,\beta}(u) = 2\beta v_{a,s}(f_{x_0,\alpha,\beta}^{-1}(u))/s\] and 
		\[\Omega_{x_0,\alpha,\beta} = \left.\{u+iv\in\C\right\vert |v|<\varphi_{x_0,\alpha,\beta}(u)\}.\]
		By Definition 3.1 and Proposition 3.6 of \cite{Ho2020}, $\Omega_{x_0,\alpha,\beta}=[H_{x_0,\alpha-\beta}(\Lambda_{x_0,s}^c)]^c$. 	Remark that $\varphi_{x_0,0,t} = \varphi_{x_0,t}$, which is defined in Theorem~\ref{thm:domain}(4).
	\end{enumerate}
	\end{definition}

	We will see that $\Omega_{x_0,\alpha,\beta}$ is an open set of full measure with respect to the Brown measure of $x_0+c_{\alpha,\beta}$.
	\begin{remark}
		The author uses a different parametrization for the elliptic variable in \cite{Ho2020}. In \cite{Ho2020} the elliptic variable is parametrized using $s$ and $t$ by $\tilde{\sigma}_{s-t/2}+i\sigma_{t/2}$. The parametrization in \cite{Ho2020} is motivated by \cite{Ho2016}. In this paper, the parametrization is a linear transformation of the one in \cite{Ho2020}, by $\alpha = s-t/2$ and $\beta = t/2$.
	\end{remark}
	
	If we apply Theorem~\ref{thm:a+ist} to compute the Brown measure of $x_0+c_{\alpha,\beta}=x_0+\tilde{\sigma}_\alpha+i\sigma_\beta$, the Brown measure is written in terms of the law of $x_0+\tilde{\sigma}_\alpha$. The key to write the Brown measure in terms of $x_0$ is the following proposition in \cite{Ho2020}. 
	\begin{proposition}[Theorem 3.4 of \cite{Ho2020}]
		\label{prop:Forellipse}
		Let $a = x_0+\tilde{\sigma}_\alpha$. Then, by writing $f_{x_0,\alpha,\beta}(u_0)=u$, we have
		\[f_{a,\beta}^{-1}(u) = u+\beta\int\frac{(u_0-x)\,d\mu_{x_0}(x)}{(u_0-x)^2+v_{x_0,s}(u_0)^2},\quad u\in\R.\]
	\end{proposition}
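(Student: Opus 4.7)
The plan is to derive two complementary composition identities for the maps $H_{\cdot,r}$ of Definition~\ref{def:ForEllipse}(1) and to evaluate them along the graph of $v_{x_0,s}$. Writing $a = x_0+\tilde\sigma_\alpha$ and using Biane's subordination relation (Theorem~\ref{thm:Biane}) applied to $x_0$ and $\tilde\sigma_\alpha$, one has $G_a(H_{x_0,\alpha}(z)) = G_{x_0}(z)$. Because $\tilde\sigma_\alpha$ and $\sigma_\beta$ are free semicirculars their variances add via the $R$-transform, so the law of $a+\sigma_\beta$ coincides with that of $x_0+\sigma_s$ where $s=\alpha+\beta$. Substituting these two identities into the definitions of $H_{a,\pm\beta}$ produces, first on a common truncated Stolz angle and then by analytic continuation and the continuous extension in Theorem~\ref{thm:Biane}(1), the algebraic identities
\begin{align*}
H_{a,\beta}(H_{x_0,\alpha}(z)) &= z + (\alpha+\beta)G_{x_0}(z) = H_{x_0,s}(z), \\
H_{a,-\beta}(H_{x_0,\alpha}(z)) &= z + (\alpha-\beta)G_{x_0}(z) = H_{x_0,\alpha-\beta}(z),
\end{align*}
valid on $\overline{\Gamma}_{x_0,s} \subseteq \overline{\Gamma}_{x_0,\alpha}$ (the inclusion uses the monotonicity $v_{x_0,\alpha}\le v_{x_0,s}$).

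Fix $u_0 \in \R$, set $z_0 = u_0 + iv_{x_0,s}(u_0) \in \partial\Gamma_{x_0,s}$, and put $w_0 = H_{x_0,\alpha}(z_0)$. By Biane, $H_{x_0,s}(z_0) \in \R$, so the first identity forces $H_{a,\beta}(w_0) \in \R$. A direct computation gives $\mathrm{Im}(w_0) = v_{x_0,s}(u_0)(1-\alpha/s) = (\beta/s)\,v_{x_0,s}(u_0) \geq 0$, and since $H_{a,\beta}\circ H_{x_0,\alpha} = H_{x_0,s}$ is a biholomorphism of $\Gamma_{x_0,s}$ onto $\C^+$, the map $H_{x_0,\alpha}$ carries $\Gamma_{x_0,s}$ into $\Gamma_{a,\beta}$; passing to closures yields $w_0 \in \overline{\Gamma}_{a,\beta}$. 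Since $H_{a,\beta}$ is a homeomorphism of $\overline{\Gamma}_{a,\beta}$ onto $\overline{\C^+}$ sending the graph of $v_{a,\beta}$ to $\R$, the point $w_0$ must lie on that graph; write $w_0 = u_1 + iv_{a,\beta}(u_1)$ with $u_1 = \mathrm{Re}(w_0)$. Applying the second identity and taking real parts,
\[
f_{a,\beta}(u_1) = \mathrm{Re}[H_{a,-\beta}(w_0)] = \mathrm{Re}[H_{x_0,\alpha-\beta}(z_0)] = f_{x_0,\alpha,\beta}(u_0) = u,
\]
so $u_1 = f_{a,\beta}^{-1}(u)$.

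To finish, I expand $u_1 = \mathrm{Re}[H_{x_0,\alpha}(z_0)]$ using $\mathrm{Re}\,G_{x_0}(u_0+iv) = \int (u_0-x)/[(u_0-x)^2+v^2]\,d\mu_{x_0}(x)$ to obtain $u_1 = u_0 + \alpha\int (u_0-x)/[(u_0-x)^2+v_{x_0,s}(u_0)^2]\,d\mu_{x_0}(x)$. The analogous expansion $u = f_{x_0,\alpha,\beta}(u_0) = u_0 + (\alpha-\beta)\int (u_0-x)/[(u_0-x)^2+v_{x_0,s}(u_0)^2]\,d\mu_{x_0}(x)$ is subtracted to eliminate $u_0$, producing the claimed formula. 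The main technical obstacle is the verification that $w_0 \in \overline{\Gamma}_{a,\beta}$ in the second step, so that the boundary characterization via $H_{a,\beta}$ applies; this rests on the biholomorphism argument sketched above together with continuity on closures. The degenerate cases $\alpha=0$ (where $H_{x_0,\alpha}$ is the identity and the identity is tautological) and $v_{x_0,s}(u_0)=0$ (where $u_0 \notin \mathrm{supp}(\mu_{x_0})$ in the relevant sense and all integrals reduce to derivatives of $G_{x_0}$ at a real point) should be treated separately. The entire argument uses only holomorphic properties of Cauchy transforms and Biane's theorem, which holds for unbounded self-adjoint $x_0 \in \A^\Delta$, so no compactness hypothesis on $\mathrm{supp}(\mu_{x_0})$ is needed.
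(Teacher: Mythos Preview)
Your proof is correct and follows essentially the approach of \cite{Ho2020} that the paper invokes: the two composition identities $H_{a,\pm\beta}\circ H_{x_0,\alpha}=H_{x_0,\alpha\pm\beta}$ obtained from Biane's subordination, evaluated along the graph of $v_{x_0,s}$, are exactly the holomorphic-function computations the paper refers to. The only step that deserves one extra line is the claim $H_{x_0,\alpha}(\Gamma_{x_0,s})\subseteq\Gamma_{a,\beta}$; this follows immediately once you note that for $w\in\C^+$ one has $\mathrm{Im}\,H_{a,\beta}(w)=\mathrm{Im}(w)\bigl(1-\beta\int|w-x|^{-2}\,d\mu_a(x)\bigr)$, so $H_{a,\beta}(w)\in\C^+$ is equivalent to $w\in\Gamma_{a,\beta}$.
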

	We are ready to state the main result in this case.
	\begin{theorem}
		\label{thm:ellipse}
		The Brown measure of $x_0+c_{\alpha,\beta}$ is absolutely continuous with respect to the Lebesgue measure on the plane. The open set $\Omega_{\alpha,\beta}$ is a set of full measure of the Brown measure. The density of the Brown measure on $\Omega_{\alpha,\beta}$ is given by
		\[w_{\alpha,\beta}(u+iv) = \frac{1}{4\pi \beta}\left(1+2\beta\frac{d}{du}\int\frac{(u_0-x)\,d\mu_{x_0}(x)}{(u_0-x)^2+v_{x_0,s}(u_0)^2}\right),\quad u+iv\in\Omega_{\alpha,\beta},\]
		where $f_{x_0,\alpha,\beta}(u_0)=u$. The density is strictly positive and constant along the vertical segments in $\Omega_{x_0,\alpha,\beta}$.
	\end{theorem}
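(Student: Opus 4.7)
The plan is to reduce the computation to Theorem~\ref{thm:a+ist} by setting $a = x_0 + \tilde\sigma_\alpha$, so that $x_0 + c_{\alpha,\beta} = a + i\sigma_\beta$. Since $\tilde\sigma_\alpha \in \A$ is bounded, Theorem~\ref{thm:ADelta}(2) ensures $a \in \A^\Delta$, and $a$ is free from $\sigma_\beta$, so Theorem~\ref{thm:a+ist} applies and yields that the Brown measure of $a + i\sigma_\beta$ is supported on $\Omega_{a,\beta}$ with density
\[w_\beta(\lambda) = \frac{1}{2\pi\beta}\left(\frac{d}{du}f_{a,\beta}^{-1}(u) - \frac{1}{2}\right),\]
constant along vertical segments and strictly positive.

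Next I would convert this into a formula in $x_0$ rather than $a$. Proposition~\ref{prop:Forellipse} gives the key identity
\[f_{a,\beta}^{-1}(u) = u + \beta\int\frac{(u_0-x)\,d\mu_{x_0}(x)}{(u_0-x)^2 + v_{x_0,s}(u_0)^2}\]
under the substitution $u = f_{x_0,\alpha,\beta}(u_0)$. Differentiating in $u$ and then combining the constant $1$ with the $-1/2$ appearing in $w_\beta$ produces
\begin{align*}
w_\beta(\lambda) &= \frac{1}{2\pi\beta}\left(\frac{1}{2} + \beta\frac{d}{du}\int\frac{(u_0-x)\,d\mu_{x_0}(x)}{(u_0-x)^2 + v_{x_0,s}(u_0)^2}\right)\\
&= \frac{1}{4\pi\beta}\left(1 + 2\beta\frac{d}{du}\int\frac{(u_0-x)\,d\mu_{x_0}(x)}{(u_0-x)^2 + v_{x_0,s}(u_0)^2}\right),
\end{align*}
which is the claimed formula for $w_{\alpha,\beta}$. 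Strict positivity and constancy along vertical segments transfer immediately from Theorem~\ref{thm:a+ist} applied to $a$.

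It remains to identify $\Omega_{a,\beta}$ with $\Omega_{x_0,\alpha,\beta}$. From Theorem~\ref{thm:domain} we have $\Omega_{a,\beta} = [H_{a,-\beta}(\Lambda_{a,\beta}^c)]^c$, while Definition~\ref{def:ForEllipse}(3) gives $\Omega_{x_0,\alpha,\beta} = [H_{x_0,\alpha-\beta}(\Lambda_{x_0,s}^c)]^c$. The two coincide once one checks the subordination relation $H_{a,-\beta} \circ H_{x_0,\alpha} = H_{x_0,\alpha-\beta}$, which follows algebraically from Biane's identity $G_a(H_{x_0,\alpha}(z)) = G_{x_0}(z)$, together with the fact that $H_{x_0,\alpha}$ conformally identifies $\Lambda_{x_0,s}^c$ with $\Lambda_{a,\beta}^c$ (Theorem~\ref{thm:Biane}). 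The main obstacle is the bookkeeping of these holomorphic maps and their boundary behavior on $\partial\Lambda_{x_0,s}$ and at infinity when $\mathrm{supp}\,\mu_{x_0}$ is unbounded; however, as noted in the paragraph preceding this theorem, the arguments in the proofs of Theorems~3.7 and~4.1 of~\cite{Ho2020} involve only holomorphic functions on Stolz angles, where boundedness of $x_0$ played no role, so the bounded-case proofs transfer verbatim once $a \in \A^\Delta$ is established.
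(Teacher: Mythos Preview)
Your proposal is correct and follows essentially the same route as the paper: apply Theorem~\ref{thm:a+ist} to $a=x_0+\tilde\sigma_\alpha$, then invoke Proposition~\ref{prop:Forellipse} to rewrite $f_{a,\beta}^{-1}$ in terms of $\mu_{x_0}$, and finally identify $\Omega_{a,\beta}$ with $\Omega_{x_0,\alpha,\beta}$ via the subordination identity $H_{a,-\beta}\circ H_{x_0,\alpha}=H_{x_0,\alpha-\beta}$, deferring the holomorphic bookkeeping to \cite{Ho2020}. The paper does not spell out a separate proof of Theorem~\ref{thm:ellipse}; it sets up exactly these ingredients in the paragraphs surrounding Definition~\ref{def:ForEllipse} and Proposition~\ref{prop:Forellipse} and declares that the computations of \cite{Ho2020} carry over verbatim, which is precisely what you have written out.
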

	\begin{remark}
		\label{rem:EllipseToSc}
		If $\alpha = 0$ and $\beta = t$, then Proposition~\ref{prop:Forellipse} implies that
		\[w_{\alpha,\beta}(u+iv) = \frac{1}{4\pi t}\left(1+2\frac{d}{du}(f_{x_0,t}^{-1}(u)-u)\right)=\frac{1}{2\pi t}\left(\frac{d}{du}f_{x_0,\beta}^{-1}(u)-\frac{1}{2}\right),\]
		giving the density of the Brown measure of $x_0+i\sigma_t$.
		\end{remark}

	When $x_0\in\A$ is a bounded self-adjoint random variable, the Brown measure of $x_0+c_t$ is computed in \cite{HoZhong2019}. The following corollary computes the Brown measure of $x_0+c_t$ for possibly-unbounded self-adjoint $x_0\in\A^\Delta$. The highlights are, as in \cite{HoZhong2019}, the Brown measure has full measure on the open set $\Lambda_{x_0,t}$, and the density can be written as terms of the derivative of the function $\psi_{x_0,t}$ in Theorem~\ref{thm:Biane}(3).
	
	\begin{corollary}
		\label{cor:Circular}
		The Brown measure of $x_0+c_t$ is absolutely continuous with respect to the Lebesgue measure on the plane. The open set $\Lambda_{x_0,t}$ is a set of full measure of the Brown measure. The density on $\Lambda_{x_0,t}$ has the form
		\begin{align*}
			w_{t/2,t/2}(u+iv)& =\frac{1}{2\pi t}\frac{d\psi_{x_0,t}}{du}\\
			&= \frac{1}{\pi t}\left(1-\frac{t}{2}\frac{d}{du}\int\frac{x\,d\mu_{x_0}(x)}{(u-x)^2+v_{x_0,t}(u)^2}\right),\quad u+iv\in\Lambda_{x_0,t},
		\end{align*}
		where $\psi_{x_0,t}$ is defined in Theorem~\ref{thm:Biane}. The density is strictly positive and constant along the vertical segments in $\Lambda_{x_0,t}$.
		\end{corollary}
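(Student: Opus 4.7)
The plan is to derive Corollary~\ref{cor:Circular} as a direct specialization of Theorem~\ref{thm:ellipse} with $\alpha = \beta = t/2$, so that $s = \alpha+\beta = t$. First I would identify the domain: since $H_{x_0, \alpha-\beta} = H_{x_0,0}$ is the identity map when $\alpha=\beta$, Definition~\ref{def:ForEllipse}(2) gives $f_{x_0,t/2,t/2}(u_0) = \mathrm{Re}[u_0 + i v_{x_0,t}(u_0)] = u_0$, i.e., $f_{x_0,t/2,t/2}$ is the identity on $\R$. Consequently $\varphi_{x_0,t/2,t/2}(u) = (2(t/2)/t)\,v_{x_0,t}(u) = v_{x_0,t}(u)$, and Definition~\ref{def:ForEllipse}(3) yields $\Omega_{x_0,t/2,t/2} = \Lambda_{x_0,t}$, which handles the full-measure assertion.

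Next, inserting $\alpha=\beta=t/2$ and $u_0=u$ into the density formula in Theorem~\ref{thm:ellipse} gives
\[
w_{t/2,t/2}(u+iv) = \frac{1}{2\pi t}\left(1 + t\,\frac{d}{du}\int\frac{(u-x)\,d\mu_{x_0}(x)}{(u-x)^2+v_{x_0,t}(u)^2}\right), \quad u+iv\in\Lambda_{x_0,t}.
\]
I would then recognize the bracketed expression as $d\psi_{x_0,t}/du$. Indeed, taking the real part in $H_{x_0,t}(u+iv_{x_0,t}(u)) = (u+iv_{x_0,t}(u)) + t\int\frac{d\mu_{x_0}(x)}{u - x + iv_{x_0,t}(u)}$ yields
\[
\psi_{x_0,t}(u) = u + t\int\frac{(u-x)\,d\mu_{x_0}(x)}{(u-x)^2+v_{x_0,t}(u)^2},
\]
so differentiation produces the first claimed expression $w_{t/2,t/2}=(2\pi t)^{-1}\psi_{x_0,t}'$.

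To derive the second (equivalent) expression, I would split the integrand as $(u-x) = u - x$ and exploit the defining relation of $v_{x_0,t}$: for $u$ with $v_{x_0,t}(u)>0$ (that is, $u\in\Lambda_{x_0,t}\cap\R$, by Proposition~\ref{prop:LambdatId}), Definition~\ref{def:SCConv}(2) gives $\int\frac{d\mu_{x_0}(x)}{(u-x)^2+v_{x_0,t}(u)^2} = 1/t$. Hence
\[
\int\frac{(u-x)\,d\mu_{x_0}(x)}{(u-x)^2+v_{x_0,t}(u)^2} = \frac{u}{t} - \int\frac{x\,d\mu_{x_0}(x)}{(u-x)^2+v_{x_0,t}(u)^2},
\]
and substituting back and simplifying produces the stated form $\frac{1}{\pi t}\bigl(1-\frac{t}{2}\frac{d}{du}\int\frac{x\,d\mu_{x_0}(x)}{(u-x)^2+v_{x_0,t}(u)^2}\bigr)$. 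Strict positivity follows from the corresponding strict positivity in Theorem~\ref{thm:ellipse}, and constancy along vertical segments is inherited because neither $u_0=u$ nor $v_{x_0,t}(u)$ depend on $v$.

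The main obstacle I anticipate is purely bookkeeping: verifying that the identification $f_{x_0,t/2,t/2}=\mathrm{id}$ is legitimate (so that $u_0=u$ throughout), and that the derivative identity connecting the two density expressions is valid not just formally but pointwise on $\Lambda_{x_0,t}\cap\R$. The latter is harmless because on $\Lambda_{x_0,t}\cap\R$ the function $v_{x_0,t}$ is real analytic and strictly positive (a fact used in Theorem~\ref{thm:domain} and Proposition~\ref{prop:regularity}), so differentiation under the integral sign and the substitution $\int d\mu_{x_0}/((u-x)^2+v_{x_0,t}(u)^2)=1/t$ can be carried out without regularity issues.
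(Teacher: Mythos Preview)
Your proposal is correct and follows essentially the same approach as the paper's proof: specialize Theorem~\ref{thm:ellipse} to $\alpha=\beta=t/2$, note that $f_{x_0,t/2,t/2}$ is the identity so that $\Omega_{x_0,t/2,t/2}=\Lambda_{x_0,t}$, identify the resulting density with $(2\pi t)^{-1}\psi_{x_0,t}'$, and then use the relation $\int\frac{d\mu_{x_0}(x)}{(u-x)^2+v_{x_0,t}(u)^2}=1/t$ on $\Lambda_{x_0,t}\cap\R$ to obtain the second form of the density.
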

	\begin{proof}
		Take $\alpha = \beta =t/2$ in Theorem~\ref{thm:ellipse}. Then, $\alpha+\beta=t$ and $2\beta/(\alpha+\beta)=1$, so that $f_{x_0,\alpha,\beta}$ is the identity map, and
		\[\Omega_{x_0,t/2,t/2} = \left.\{u+iv\in\C\right\vert |v|<v_{x_0,t}(u)\} = \Lambda_{x_0,t}.\]
		Then we can compute the density using the formula in Theorem~\ref{thm:ellipse}
		\begin{align*}
		w_{t/2,t/2}(u+iv) &= \frac{1}{2\pi t}\left(1+t\frac{d}{du}\int\frac{(u-x)\,d\mu_{x_0}(x)}{(u-x)^2+v_{x_0,t}(u)^2}\right)\\
		&=\frac{1}{2\pi t}\frac{d\psi_{x_0,t}}{du},\quad u+iv\in\Lambda_{x_0,t};
		\end{align*}
		the last equality follows from the definition of $\psi_{x_0,t}(u) = H_{x_0,t}(u+iv_{x_0,t}(u))$. By Definiton~\ref{def:SCConv}(2) of $v_{x_0,t}$, we have
		\[\int\frac{(u-x)\,d\mu_{x_0}(x)}{(u-x)^2+v_{x_0,t}(u)^2} = \frac{u}{t}-\int\frac{x\,d\mu_{x_0}(x)}{(u-x)^2+v_{x_0,t}(u)^2}.\]
		The other claimed formula of $w_t$ follows from a straightforward algebraic computation.
		\end{proof}
	The following corollary establishes the push-forward properties between the Brown measure of $x_0+c_{\alpha,\beta}$, the Brown measure of $x_0+c_s$, and the law of $x_0+\sigma_t$, where $s=\alpha+\beta$ is the total variance of the elliptic variable $c_{\alpha,\beta}$.
	\begin{corollary}
		\label{cor:ellipsePush}
		Write $s=\alpha+\beta$.	Let $U_{\alpha,\beta}:\Lambda_{x_0,s}\to\Omega_{x_0,\alpha,\beta}$ be defined by
		\begin{equation}
		\label{eq:ellipticU}
		U_{\alpha,\beta}(u+iv) = f_{x_0,\alpha,\beta}(u)+i\frac{2\beta v}{s}.
		\end{equation}
		Then $U_{\alpha,\beta}$ extends to a homeomorphism from $\overline{\Lambda}_{x_0,s}$ to $\overline{\Omega}_{\alpha,\beta}$ and agrees with $H_{x_0,\alpha-\beta}$ on the boundary of $\Lambda_{x_0,s}$. Furthermore, the following push-forward properties hold.
		\begin{enumerate}
			\item The push-forward of the Brown measure of $x_0+c_s$ under the map $U_{\alpha,\beta}$ is the Brown measure of $x_0+c_{\alpha,\beta}$.
			\item The push-forward of the Brown measure of $x_0+c_{\alpha,\beta}$ by the map
			\[Q_{\alpha,\beta}(u+iv) = \begin{cases}
			\frac{1}{\alpha-\beta}[su-2\beta f_{x_0,\alpha,\beta}^{-1}(u)],\quad &\textrm{if $\alpha\neq \beta$}\\
			H_{x_0,s}(u+iv_{x_0,s}(u)),\quad &\textrm{if $\alpha= \beta=s/2$}.
			\end{cases}\]
			is the law of $x_0+\sigma_s$.
			\end{enumerate}
		\end{corollary}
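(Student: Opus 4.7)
The overall plan is to verify each claim by direct computation, reducing both push-forward statements to the density of $x_0+c_s$ provided by Corollary~\ref{cor:Circular}. The key algebraic identity I would use repeatedly is that on $\partial\Lambda_{x_0,s}$, where $v=v_{x_0,s}(u)$, the relation $\int d\mu_{x_0}(x)/((u-x)^2+v_{x_0,s}(u)^2)=1/s$ forces $\mathrm{Im}[H_{x_0,r}(u+iv_{x_0,s}(u))] = v_{x_0,s}(u)(1-r/s)$ for any real $r$. In particular, at $r=\alpha-\beta$ the imaginary part equals $2\beta v_{x_0,s}(u)/s$, which is exactly the scaling factor appearing in $U_{\alpha,\beta}$.

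For the homeomorphism claim, I would observe that since $f_{x_0,\alpha,\beta}:\R\to\R$ is a strictly increasing homeomorphism by Definition~\ref{def:ForEllipse}(2), the map $U_{\alpha,\beta}$ is a product of a homeomorphism on the real part and a linear scaling by $2\beta/s$ on the imaginary part. Its image on $\Lambda_{x_0,s}$ is the set $\{u'+iv':|v'|<\varphi_{x_0,\alpha,\beta}(u')\}=\Omega_{x_0,\alpha,\beta}$, directly by the definition of $\varphi_{x_0,\alpha,\beta}$, and it extends continuously to the closures. Boundary agreement with $H_{x_0,\alpha-\beta}$ then combines the definition $f_{x_0,\alpha,\beta}(u)=\mathrm{Re}[H_{x_0,\alpha-\beta}(u+iv_{x_0,s}(u))]$ with the imaginary-part identity above.

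For part (1), the Jacobian determinant of $U_{\alpha,\beta}$ is $(2\beta/s)f_{x_0,\alpha,\beta}'(u)$. Using the density $w_{s/2,s/2}(u+iv)=\psi_{x_0,s}'(u)/(2\pi s)$ from Corollary~\ref{cor:Circular}, the push-forward density at $U_{\alpha,\beta}(u+iv)$ becomes $\psi_{x_0,s}'(u)/(4\pi\beta f_{x_0,\alpha,\beta}'(u))$. Matching against Theorem~\ref{thm:ellipse} reduces everything to the identity
\[\psi_{x_0,s}(u_0) \;=\; f_{x_0,\alpha,\beta}(u_0)+2\beta\int\frac{(u_0-x)\,d\mu_{x_0}(x)}{(u_0-x)^2+v_{x_0,s}(u_0)^2},\]
which follows at once by subtracting the real-part defining formulas for $H_{x_0,s}$ and $H_{x_0,\alpha-\beta}$ evaluated at $u_0+iv_{x_0,s}(u_0)$; differentiating in $u_0$ yields the density match after applying the chain rule $d/du=(1/f_{x_0,\alpha,\beta}'(u_0))\,d/du_0$.

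For part (2), I would first handle $\alpha=\beta=s/2$, where $f_{x_0,\alpha,\beta}$ is the identity, $\Omega_{x_0,\alpha,\beta}=\Lambda_{x_0,s}$, and $Q_{s/2,s/2}(u+iv)=\psi_{x_0,s}(u)$. Integrating the density $w_{s/2,s/2}$ in $v$ over the segment of length $2v_{x_0,s}(u)$, then substituting $y=\psi_{x_0,s}(u)$, gives the density $v_{x_0,s}(\psi_{x_0,s}^{-1}(y))/(\pi s)$, which is $p_{x_0,s}$ by Theorem~\ref{thm:Biane}(3). For general $\alpha\neq\beta$, a short algebraic computation using $u=f_{x_0,\alpha,\beta}(u_0)=u_0+(\alpha-\beta)\int(u_0-x)/((u_0-x)^2+v_{x_0,s}(u_0)^2)\,d\mu_{x_0}(x)$ shows $Q_{\alpha,\beta}\circ U_{\alpha,\beta}(u_0+iv_0)=\psi_{x_0,s}(u_0)=Q_{s/2,s/2}(u_0+iv_0)$; combining this with part~(1) reduces the general case to the one already handled. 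The main obstacle is bookkeeping—distinguishing coordinates $u_0\in\Lambda_{x_0,s}$ from $u\in\Omega_{x_0,\alpha,\beta}$ under $f_{x_0,\alpha,\beta}$, and keeping the chain rule straight—rather than any substantive analytic difficulty, since Theorem~\ref{thm:ellipse} already supplies the density of $x_0+c_{\alpha,\beta}$ and Theorem~\ref{thm:Biane} supplies that of $x_0+\sigma_s$ in the unbounded setting.
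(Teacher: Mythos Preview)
Your proposal is correct and is essentially the concrete realization of the paper's proof, which simply defers to \cite[Proposition~4.2 and Theorem~4.1]{Ho2020} and \cite[Theorem~3.13]{HoZhong2019} on the grounds that those arguments carry over verbatim once Theorem~\ref{thm:ellipse} and Theorem~\ref{thm:Biane} are available in the unbounded setting. Your boundary identity $\mathrm{Im}\,H_{x_0,\alpha-\beta}(u+iv_{x_0,s}(u))=2\beta v_{x_0,s}(u)/s$, your Jacobian density-matching for part~(1) via $\psi_{x_0,s}=f_{x_0,\alpha,\beta}+2\beta\int(u_0-x)/((u_0-x)^2+v_{x_0,s}(u_0)^2)\,d\mu_{x_0}$, and your reduction $Q_{\alpha,\beta}\circ U_{\alpha,\beta}=\psi_{x_0,s}$ for part~(2) are exactly the computations underlying those references (the last identity is in fact recorded in the paper's Remark immediately following the corollary).
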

	\begin{remark}
		Write $u = f_{x_0,\alpha,\beta}(u_0)$. When $\alpha\neq \beta$, we can compute that
		\[Q_{\alpha,\beta}(u+iv) =u_0+s\int\frac{(u_0-x)\,d\mu_{x_0}(x)}{(u_0-x)^2+v_{x_0,s}(u_0)^2}=H_{x_0,s}(u_0+iv_{x_0,s}(u)).\]
		If $\alpha=\beta=s/2$, then $u=u_0$ and the above equation reduces to $H_{x_0,s}(u+iv_{x_0,s}(u))$, which is the definition of $Q_{\alpha,\beta}$ when $\alpha=\beta=s/2$. 
		\end{remark}
	\begin{proof}
		That $U_{\alpha,\beta}$ extends to a homeomorphism from $\overline{\Lambda}_{x_0,s}$ to $\overline{\Omega_{\alpha,\beta}}$ and agrees with $H_{x_0,\alpha-\beta}$ on the boundary of $\Lambda_{x_0,s}$ follows from the proof of Proposition 4.2 of \cite{Ho2020}.
		
		For $\alpha\neq\beta$, the proof follows from the one given for Theorem 4.1 of \cite{Ho2020}. For the case $\alpha=\beta=s/2$, the proof follows from the one given for Theorem 3.13 of \cite{HoZhong2019}.
		\end{proof}
	
Using the push-forward property in Point 1 of Corollary~\ref{cor:ellipsePush}, we can express the density of the Brown measure of $x_0+c_{\alpha,\beta}$ in terms of the density of the Brown measure of $x_0+c_{s}$, if $\alpha+\beta=s$. 
\begin{corollary}
	For any $r=2\beta/s$, by writing $u+iv=U_{\alpha,\beta}(u_0+iv_0)$ for all $u_0+iv_0\in\Lambda_{x_0,s}$, we have
	\[w_{\alpha,\beta}(u+iv) = \frac{1}{r}\frac{w_{s/2,s/2}(u_0+iv_0)}{r+2\pi(1-r)s\cdot w_{s/2,s/2}(u_0+iv_0)}.\]
	\end{corollary}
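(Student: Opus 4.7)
The plan is to derive the formula by combining the change of variables formula for the push-forward with the explicit expressions for $w_{\alpha,\beta}$, $w_{s/2,s/2}$, and $f_{x_0,\alpha,\beta}$ given earlier. Let $A(u_0)$ denote the common quantity
\[A(u_0)=\frac{d}{du_0}\int\frac{(u_0-x)\,d\mu_{x_0}(x)}{(u_0-x)^2+v_{x_0,s}(u_0)^2},\]
so that by Theorem~\ref{thm:ellipse} we have $w_{\alpha,\beta}(u+iv)=(1+2\beta A(u_0))/(4\pi\beta)$ and by Corollary~\ref{cor:Circular} we have $w_{s/2,s/2}(u_0+iv_0)=(1+sA(u_0))/(2\pi s)$. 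Also, from Definition~\ref{def:ForEllipse}(1) together with $H_{x_0,\alpha-\beta}(z)=z+(\alpha-\beta)G_{x_0}(z)$, we get $f_{x_0,\alpha,\beta}(u_0)=u_0+(\alpha-\beta)\int(u_0-x)\,d\mu_{x_0}(x)/((u_0-x)^2+v_{x_0,s}(u_0)^2)$, whence $f_{x_0,\alpha,\beta}'(u_0)=1+(\alpha-\beta)A(u_0)$.

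Next, I would invoke Corollary~\ref{cor:ellipsePush}(1): the Brown measure of $x_0+c_{\alpha,\beta}$ is the push-forward of the Brown measure of $x_0+c_s$ under $U_{\alpha,\beta}(u_0+iv_0)=f_{x_0,\alpha,\beta}(u_0)+i(2\beta v_0/s)$. Since $U_{\alpha,\beta}$ is a homeomorphism $\Lambda_{x_0,s}\to\Omega_{x_0,\alpha,\beta}$, and is smooth on the interior by Theorem~\ref{thm:domain}(2)/Definition~\ref{def:ForEllipse}(2), the ordinary change of variables formula gives
\[w_{\alpha,\beta}(U_{\alpha,\beta}(u_0+iv_0))\,|\det DU_{\alpha,\beta}(u_0+iv_0)|=w_{s/2,s/2}(u_0+iv_0).\]
A direct computation of the $2\times 2$ Jacobian yields $\det DU_{\alpha,\beta}(u_0+iv_0)=r\,f_{x_0,\alpha,\beta}'(u_0)$, where $r=2\beta/s$. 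Both factors are positive on $\Lambda_{x_0,s}$, so there is no absolute-value issue.

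It then remains to eliminate $A(u_0)$ algebraically. The identity $1+sA(u_0)=2\pi s\,w_{s/2,s/2}(u_0+iv_0)$ gives $A(u_0)=(2\pi s\,w_{s/2,s/2}(u_0+iv_0)-1)/s$, and substituting into $f_{x_0,\alpha,\beta}'(u_0)=1+(\alpha-\beta)A(u_0)=1+s(1-r)A(u_0)$ yields
\[f_{x_0,\alpha,\beta}'(u_0)=r+2\pi(1-r)s\,w_{s/2,s/2}(u_0+iv_0).\]
Plugging into the push-forward identity produces
\[w_{\alpha,\beta}(u+iv)=\frac{w_{s/2,s/2}(u_0+iv_0)}{r\,f_{x_0,\alpha,\beta}'(u_0)}=\frac{1}{r}\frac{w_{s/2,s/2}(u_0+iv_0)}{r+2\pi(1-r)s\,w_{s/2,s/2}(u_0+iv_0)},\]
which is the claimed formula.

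The computation is entirely mechanical once the push-forward and Jacobian steps are in place; the only mild subtlety is making sure the change of variables is legitimate, which is why I would cite the regularity of $U_{\alpha,\beta}$ from Theorem~\ref{thm:domain}(2) and check positivity of $f_{x_0,\alpha,\beta}'(u_0)$ (noting $r\in(0,2)$ and $1+sA(u_0)=2\pi s\,w_{s/2,s/2}>0$ on $\Lambda_{x_0,s}$ by Corollary~\ref{cor:Circular}, which forces $r+2\pi(1-r)s\,w_{s/2,s/2}>0$). Hence there is no real obstacle, only bookkeeping between two equivalent forms of $A(u_0)$.
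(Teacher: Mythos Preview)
Your proof is correct and follows exactly the approach the paper indicates: the change-of-variables formula applied to the push-forward in Corollary~\ref{cor:ellipsePush}(1), together with the explicit form of $f_{x_0,\alpha,\beta}'$ rewritten via $w_{s/2,s/2}$.

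One small correction to your setup, though it does not affect the argument: the formula you quote from Theorem~\ref{thm:ellipse}, namely $w_{\alpha,\beta}(u+iv)=(1+2\beta A(u_0))/(4\pi\beta)$, is not quite right. In that theorem the derivative is taken with respect to $u$, not $u_0$, so the correct expression is
\[
w_{\alpha,\beta}(u+iv)=\frac{1}{4\pi\beta}\Bigl(1+\frac{2\beta A(u_0)}{f_{x_0,\alpha,\beta}'(u_0)}\Bigr)=\frac{1+sA(u_0)}{4\pi\beta\,f_{x_0,\alpha,\beta}'(u_0)}.
\]
Since you never actually use your misstated version---you recover $w_{\alpha,\beta}$ directly from the push-forward identity and only use the (correct) formula for $w_{s/2,s/2}$ to eliminate $A(u_0)$---your derivation stands as written.
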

\begin{proof}
	This follows from a direct computation of integration of substitution using Corollary~\ref{cor:ellipsePush}. Interested readers are referred to the proof of Corollary 4.3 of \cite{Ho2020}.
	\end{proof}

\begin{figure}[h]
	\begin{center}
		\includegraphics[width=0.65\textwidth]{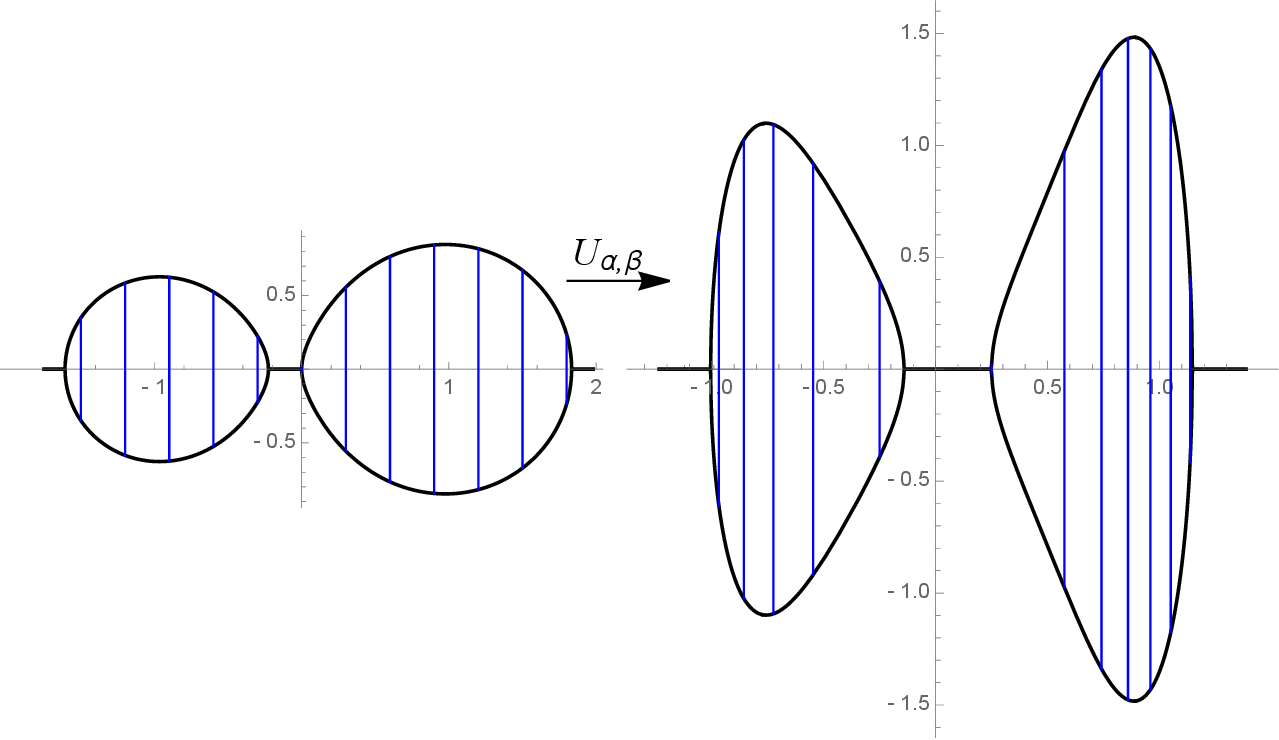}
	\end{center}
	\caption{Visualization of the map $U_{\alpha,\beta}$ with $\alpha = 1/8$ and $\beta =7/8$.\label{fig:UVisual}}
\end{figure}

Figure~\ref{fig:UVisual} shows a visualization of the map $U_{\alpha,\beta}$, with $\mu_{x_0} = \frac{1}{3}\delta_{-1}+\frac{2}{3}\delta_1$, $\alpha = 1/8$ and $\beta = 7/8$. The map $U_{\alpha,\beta}$ takes the blue equally-spaced vertical lines on the left hand side of the figure to the corresponding vertical lines on the right hand side of the figure. The blue vertical lines on the right hand side are no longer equally-spaced. In the next proposition, we investigate the spacing of the vertical lines on the right hand side by looking at the second derivative of $f_{x_0,\alpha,\beta}$. If $f_{x_0,\alpha,\beta}''>0$ on an interval $I$, then the spacing between the image of the vertical lines in $\Lambda_{x_0,s}$ is increasing on $I$. Since the push-forward of the Brown measure of $x_0+c_s$ by $U_{\alpha,\beta}$ is the Brown measure of $x_0+c_{\alpha,\beta}$, this proposition describes how mass is transferred under this push-forward map --- whether the mass is ``squeezed'' or ``stretched'' (relative to the total mass on $\Omega_{x_0,\alpha,\beta}$). More precisely, the mass of
\[\{\left.u_0+iv_0\in\Lambda_{x_0,s}\right\vert a\leq u_0\leq b\}\]
with respect to the Brown measure $x_0+c_s$ is transferred to the set
\[\{\left.u+iv\in\Omega_{x_0,\alpha,\beta}\right\vert f_{x_0,\alpha,\beta}(a)\leq u\leq f_{x_0,\alpha,\beta}(b)\}.\]
If $f_{x_0,\alpha,\beta}''<0$ on $(a,b)$ and $f_{x_0,\alpha,\beta}''>0$ on $(b,c)$, then the push-forward map $U_{\alpha,\beta}$ ``squeezes'' the mass towards the vertical line intersecting $b$. Similarly, if $f_{x_0,\alpha,\beta}''>0$ on $(a,b)$ and $f_{x_0,\alpha,\beta}''<0$ on $(b,c)$, then the push-forward map $U_{\alpha,\beta}$ ``stretches'' the mass away from the vertical line intersecting $b$. 

\begin{proposition}
	\label{prop:spacing}
	Writing $s=\alpha+\beta$, we have
	\[f_{x_0,\alpha,\beta}''(u_0) =2\pi(\alpha-\beta)w_{s/2,s/2}'(u_0)= \frac{\alpha-\beta}{s}\psi_{x_0,s}''(u_0)\]
	for all $u_0\in\Lambda_{x_0,s}\cap\R$, where $\psi_{x_0,s}(u_0) = H_{x_0,s}(u_0+iv_{x_0,s}(u_0))$ is defined in Theorem~\ref{thm:Biane}(3).
\end{proposition}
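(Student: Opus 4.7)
The plan is to reduce everything to a single algebraic identity between $f_{x_0,\alpha,\beta}$, $\psi_{x_0,s}$, and the identity function on $\R$, and then to differentiate. I would proceed in three short steps.

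First, I would write out both $f_{x_0,\alpha,\beta}(u_0)$ and $\psi_{x_0,s}(u_0)$ using the definition $H_{x_0,r}(z) = z + rG_{x_0}(z)$. Since for $v>0$ we have $\mathrm{Re}\,G_{x_0}(u_0+iv) = \int\frac{(u_0-x)\,d\mu_{x_0}(x)}{(u_0-x)^2+v^2}$, and since $\psi_{x_0,s}(u_0) = H_{x_0,s}(u_0+iv_{x_0,s}(u_0))$ is real by Theorem~\ref{thm:Biane}(1), we obtain
\[
f_{x_0,\alpha,\beta}(u_0) = u_0 + (\alpha-\beta)\int\frac{(u_0-x)\,d\mu_{x_0}(x)}{(u_0-x)^2+v_{x_0,s}(u_0)^2},
\]
\[
\psi_{x_0,s}(u_0) = u_0 + s\int\frac{(u_0-x)\,d\mu_{x_0}(x)}{(u_0-x)^2+v_{x_0,s}(u_0)^2}.
\]
Eliminating the common integral gives the clean identity
\[
f_{x_0,\alpha,\beta}(u_0) = \frac{2\beta}{s}u_0 + \frac{\alpha-\beta}{s}\psi_{x_0,s}(u_0),
\]
valid for all $u_0\in\R$.

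Second, I would differentiate this identity twice. The linear term $\tfrac{2\beta}{s}u_0$ contributes nothing to the second derivative, so at any $u_0\in\Lambda_{x_0,s}\cap\R$, where both $f_{x_0,\alpha,\beta}$ and $\psi_{x_0,s}$ are $C^2$ (this regularity on $\Lambda_{x_0,s}\cap\R$ is already built into Theorem~\ref{thm:domain}(2) and Biane's framework via the real-analyticity of $v_{x_0,s}$ there), one reads off
\[
f_{x_0,\alpha,\beta}''(u_0) = \frac{\alpha-\beta}{s}\psi_{x_0,s}''(u_0),
\]
which is the second of the two asserted equalities.

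Third, for the first equality I would invoke Corollary~\ref{cor:Circular}, which gives
\[
w_{s/2,s/2}(u_0+iv_0) = \frac{1}{2\pi s}\frac{d\psi_{x_0,s}}{du_0}(u_0), \quad u_0+iv_0\in\Lambda_{x_0,s}.
\]
Differentiating once more in $u_0$ yields $\psi_{x_0,s}''(u_0) = 2\pi s\, w_{s/2,s/2}'(u_0)$, and substituting this into the relation obtained in the second step produces the first equality $f_{x_0,\alpha,\beta}''(u_0) = 2\pi(\alpha-\beta)w_{s/2,s/2}'(u_0)$.

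There is really no hard step in this argument: once the linear decomposition $f_{x_0,\alpha,\beta} = \tfrac{2\beta}{s}\,\mathrm{id} + \tfrac{\alpha-\beta}{s}\psi_{x_0,s}$ is spotted, both identities are one or two differentiations away. The only point one must be slightly careful about is that all derivatives are taken at points of $\Lambda_{x_0,s}\cap\R$, where $v_{x_0,s}>0$ and the integrals above are smooth in $u_0$ by dominated convergence; on the complement the statement need not be meaningful, but the proposition only claims equality on $\Lambda_{x_0,s}\cap\R$.
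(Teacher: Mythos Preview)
Your proof is correct and follows essentially the same approach as the paper: the paper writes $f_{x_0,\alpha,\beta}(u_0)=u_0+\tfrac{\alpha-\beta}{s}(\psi_{x_0,s}(u_0)-u_0)$, which is algebraically identical to your decomposition $\tfrac{2\beta}{s}u_0+\tfrac{\alpha-\beta}{s}\psi_{x_0,s}(u_0)$, and then differentiates twice and invokes Corollary~\ref{cor:Circular} exactly as you do.
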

\begin{proof}
	Using the formula
	\[f_{x_0,\alpha,\beta}(u_0) = u_0+(\alpha-\beta)\int\frac{(u_0-x)\,d\mu_{x_0}(x)}{(u_0-x)^2+v_{x_0,s}(u_0)^2}=u_0+\frac{\alpha-\beta}{s}(\psi_{x_0,s}(u_0)-u_0),\]
	we have
	\[f_{x_0,\alpha,\beta}''(u_0) = \frac{\alpha-\beta}{s}\psi_{x_0,s}''(u_0).\]
	The other formula for $f_{x_0,\alpha,\beta}''$ follows from $\psi_{x_0,s}'(u_0) = 2\pi s w_{s/2,s/2}(u_0)$ by Corollary~\ref{cor:Circular}.
	\end{proof}

\section{Examples: Cauchy case\label{sect:example}}
In this section, we compute the Brown measures of $x_0+c_{\alpha,\beta}$ where $x_0$ has the Cauchy distribution
\[d\mu_{x_0}(x) = \frac{1}{\pi}\frac{dx}{1+x^2}.\]
Since the density of $\mu_{x_0}$ has polynomial decay at $\pm\infty$, $x_0\in\A^\Delta$, so does $x_0+c_{\alpha,\beta}\in\A^\Delta$. We first compute the Brown measure of $x_0+c_t$; the computation in the process is also useful for the computation of the Brown measure of $x_0+c_{\alpha,\beta}$. Lastly, we compute the Brown measure of $x_0+i\sigma_t$ by putting $\alpha = 0$ and $\beta = t$ to the Brown measure of $x_0+c_{\alpha,\beta}$.
\subsection{Adding a circular variable}
In this section, we compute the Brown measure of $x_0+c_t$ as in the following theorem. Some of the computations will be used again when we compute the Brown measure of $x_0+c_{\alpha,\beta}$.
\begin{theorem}
	\label{thm:CauchyEx}
	When $x_0$ is Cauchy distributed, the boundary of the domain $\Lambda_{x_0,t}$ has the form
	\begin{equation}
	\label{eq:BdryLambdatCauchy}
	\partial\Lambda_{x_0,t} = \left\{u\pm iv\left\vert u^2=\frac{1}{v}(1+v)(t-v-v^2), v>0\right.\right\}.
	\end{equation}
	The upper boundary $(\partial\Lambda_{x_0,t})\cap(\C^+\cup\R)$  of $\Lambda_{x_0,t}$ is the graph of a positive unimodal function with peak $\frac{-1+\sqrt{1+4t}}{2}$ at $0$.
	The Brown measure of $x_0+c_t$ has full measure on $\Lambda_{x_0,t}$, with density
	\[w_{t/2,t/2}(\lambda) = \frac{1}{2\pi t}\frac{t+4v^2(1+v)^2}{(1+v)(t+2v^2(1+v))}\]
	where $v = v_{x_0,t}(u)$.
\end{theorem}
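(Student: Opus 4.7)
The plan is to exploit the fact that the Cauchy distribution has an explicit Cauchy transform and substitute directly into the formulas from Definition~\ref{def:SCConv} and Corollary~\ref{cor:Circular}.

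First I would compute, by a residue calculation (closing the contour in the lower half plane, where the density has a simple pole at $x=-i$), that
\[
G_{x_0}(z) = \int_{\R}\frac{1}{\pi(1+x^2)}\cdot\frac{dx}{z-x} = \frac{1}{z+i},\qquad z\in\C^+.
\]
Writing $z=u+iv$ with $v>0$, the identity $\mathrm{Im}\,G_{x_0}(u+iv)=-v\int d\mu_{x_0}(x)/((u-x)^2+v^2)$ then gives
\[
\int\frac{d\mu_{x_0}(x)}{(u-x)^2+v^2} = \frac{v+1}{v\,[u^2+(v+1)^2]}.
\]
By Definition~\ref{def:SCConv}(2), $v_{x_0,t}(u)$ is the positive solution of this expression equalling $1/t$, which rearranges to
\[
u^2 = \frac{(v+1)(t-v-v^2)}{v}.
\]
This is the first claim~\eqref{eq:BdryLambdatCauchy}. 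Examining the right-hand side as a function of $v>0$: it vanishes at the unique positive root $v^*=(-1+\sqrt{1+4t})/2$ of $t-v-v^2$, tends to $+\infty$ as $v\to 0^+$, and is strictly decreasing on $(0,v^*)$ (the elementary calculus check is straightforward since $(v+1)(t-v-v^2)/v$ has derivative of definite sign on this interval). Hence the upper branch $v=v_{x_0,t}(u)$ of $\partial\Lambda_{x_0,t}$ is a positive unimodal function of $u$ with peak value $v^*$ attained at $u=0$.

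For the density, by Corollary~\ref{cor:Circular} I need $\psi_{x_0,t}'(u)/(2\pi t)$, where $\psi_{x_0,t}(u)=H_{x_0,t}(u+iv)=u+t\,\mathrm{Re}\,G_{x_0}(u+iv)$. Using $G_{x_0}(u+iv)=1/(u+i(v+1))$ and the defining relation $v[u^2+(v+1)^2]=t(v+1)$ (which lets me replace $u^2+(v+1)^2$ by $t(v+1)/v$), one simplifies
\[
\psi_{x_0,t}(u) = u + \frac{tu}{u^2+(v+1)^2} = u + \frac{uv}{v+1}.
\]
Implicit differentiation of $v[u^2+(v+1)^2]=t(v+1)$ yields
\[
\frac{dv}{du} = \frac{-2uv^2}{t+2v^2(1+v)},
\]
after using the boundary relation to simplify the bracket $u^2+(v+1)^2+2v(v+1)-t$ to $(t+2v^2(1+v))/v$. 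Substituting this into
\[
\psi_{x_0,t}'(u) = \frac{2v+1}{v+1}+\frac{u}{(v+1)^2}\,\frac{dv}{du},
\]
and using $u^2=(v+1)(t-v-v^2)/v$ once more to eliminate $u^2$, the numerator collapses algebraically to $t+4v^2(1+v)^2$, giving
\[
\psi_{x_0,t}'(u) = \frac{t+4v^2(1+v)^2}{(1+v)\,[t+2v^2(1+v)]},
\]
which divided by $2\pi t$ is the claimed density.

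The routine but delicate step is the algebraic collapse in the last paragraph: both the simplification of $\{u^2+(v+1)^2+2v(v+1)-t\}$ after using the boundary equation, and the cancellation in the numerator of $\psi_{x_0,t}'$, must use the relation $vu^2=(v+1)(t-v-v^2)$ repeatedly. That is where the only real labor lies; everything else is either a short contour integral or an application of the results already established earlier in the paper.
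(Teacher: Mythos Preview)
Your proposal is correct and follows essentially the same route as the paper: compute the integral $\int d\mu_{x_0}/((u-x)^2+v^2)$ (you do it via the closed form $G_{x_0}(z)=1/(z+i)$, the paper just evaluates the integral directly), set it equal to $1/t$ to obtain the boundary relation, establish unimodality (you argue via monotonicity of $(v+1)(t-v-v^2)/v$ in $v$, the paper differentiates implicitly and reads off the sign of $dv/du$; these are the same computation), simplify $\psi_{x_0,t}(u)$ to $u+uv/(1+v)$ using the boundary relation, and then differentiate with the aid of the implicit formula for $dv/du$. Your algebraic collapse in the last step is exactly the one the paper performs.
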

\begin{remark}
	As $|u|\to\infty$, the density $w_t(u)$ does not approach $0$. In fact, as $|u|\to\infty$, $v_{x_0,t}(u)\to 0$, so $w_t(u)$ approaches $1/(2\pi t)$. The density $w_t(\lambda)$ on $\Lambda_{x_0,t}$ still defines a probability measure because the function $v_{x_0,t}(u)\approx t/u^2$ as $|u|\to\infty$.
	\end{remark}

Figure~\ref{fig:CauchyCircular} plots the eigenvalue simulation of $x_0+c_t$, the density of the Brown measure of $x_0+c_t$, as well as the function $w_t(u)$ for $u\in\R$ at $t=1$.

\begin{figure}[h]
	\begin{center}
	\includegraphics[width=0.6\textwidth]{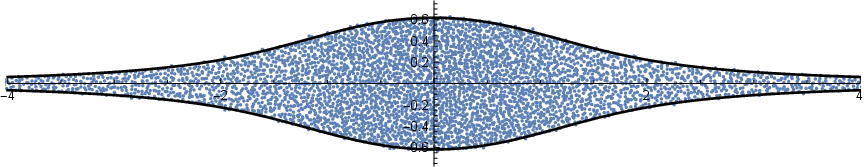}\\[10pt]
	\includegraphics[width=0.6\textwidth]{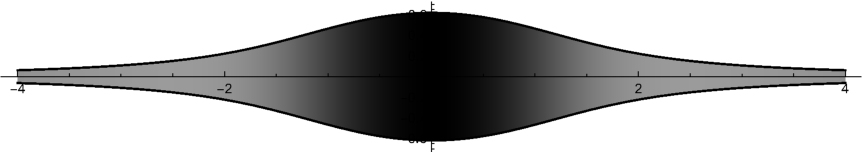}\\[10pt]
	\includegraphics[width=0.6\textwidth]{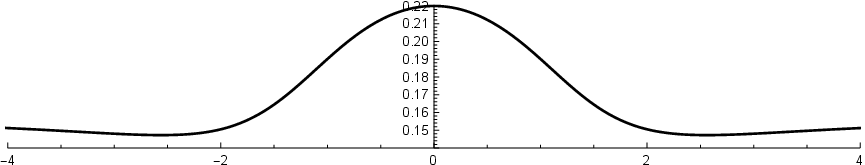}
	\caption{$5000\times 5000$ matrix simulation of eigenvalues of $x_0+c_t$ (top), density plot of the Brown measure of $x_0+c_t$ (middle), and $w_{t/2,t/2}(u)$ for $u\in\R$ (bottom), at $t=1$\label{fig:CauchyCircular}. Top two plotted with boundary of $\Lambda_{x_0,t}$.}
	\end{center}
	\end{figure}
We start by computing the function $v_{x_0,t}(u)$, then we compute the derivative of $\psi_{x_0,t}$ (See Theorem~\ref{thm:Biane} for definition of $\psi_{x_0,t}$).
\begin{lemma}
	For each $u\in\R$, $v_{x_0,t}(u)$ is the unique positive number $v$ satisfying
	\begin{equation}
	\label{eq:vEqCauchy}
	u^2=\frac{1}{v}(1+v)(t-v-v^2).
	\end{equation}
	Thus, $u$ and $dv_{x_0,t}/du$ have opposite sign; in particular, $v_{x_0,t}$ is unimodal with peak at $0$, and $v_{x_0,t}(0)=\frac{-1+\sqrt{1+4t}}{2}$.
	\end{lemma}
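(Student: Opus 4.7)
The plan is to exploit the fact that the Cauchy distribution has an elementary Cauchy transform, which makes the defining integral for $v_{x_0,t}$ explicit. First I would compute $G_{x_0}$ by residues: viewing $\mu_{x_0}$ as the boundary measure of the meromorphic function $\frac{1}{\pi(1+x^2)}$, closing the contour in the lower half-plane with the pole at $x=-i$, one gets
\[
G_{x_0}(z) = \frac{1}{z+i},\qquad z\in\C^+.
\]
From this and the general identity $\int\frac{d\mu_{x_0}(x)}{(u-x)^2+v^2}=-\frac{1}{v}\,\mathrm{Im}\,G_{x_0}(u+iv)$, I would obtain
\[
\int\frac{d\mu_{x_0}(x)}{(u-x)^2+v^2}=\frac{v+1}{v\bigl(u^2+(v+1)^2\bigr)}.
\]

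Next, I would set this expression equal to $1/t$ (which is exactly the equation characterizing $v_{x_0,t}(u)$ in Definition~\ref{def:SCConv}(2)) and algebraically rearrange to get
\[
u^2=\frac{1}{v}(1+v)(t-v-v^2),
\]
establishing~\eqref{eq:vEqCauchy}. The bulk of the work is then to show that for every $u\in\R$, this equation has a \emph{unique} positive solution. To do this I would introduce $F(v) = (1+v)(t-v-v^2)/v$ for $v>0$ and expand it as $F(v)=t/v+(t-1)-2v-v^2$, whose derivative $F'(v)=-t/v^2-2-2v$ is manifestly strictly negative on $(0,\infty)$. Combined with $F(v)\to +\infty$ as $v\to 0^+$ and $F(v^\ast)=0$ at $v^\ast=\tfrac{-1+\sqrt{1+4t}}{2}$ (the positive root of $t-v-v^2$), strict monotonicity yields a unique $v\in(0,v^\ast]$ for each $u^2\in[0,\infty)$.

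Finally, the sign statement follows by implicit differentiation of $u^2=F(v_{x_0,t}(u))$: one gets $2u = F'(v_{x_0,t}(u))\,dv_{x_0,t}/du$, and since $F'<0$ on $(0,v^\ast)$, the ratio $dv_{x_0,t}/du$ has the opposite sign to $u$. This shows $v_{x_0,t}$ is increasing on $(-\infty,0)$, decreasing on $(0,\infty)$, and therefore unimodal with peak at $u=0$; substituting $u=0$ into~\eqref{eq:vEqCauchy} forces $t-v-v^2=0$, giving $v_{x_0,t}(0)=\tfrac{-1+\sqrt{1+4t}}{2}$. I do not expect any serious obstacle here — the only place where one must be a bit careful is keeping the sign conventions correct when passing between the Cauchy transform and the Poisson-type integral, but everything else is algebraic and monotonicity bookkeeping.
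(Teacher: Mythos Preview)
Your proposal is correct and follows essentially the same approach as the paper: compute the Poisson-type integral for the Cauchy law (the paper does this directly, you route it through $G_{x_0}(z)=1/(z+i)$, arriving at the identical expression), set it equal to $1/t$ to obtain~\eqref{eq:vEqCauchy}, and then implicitly differentiate --- your $F'(v)=-t/v^2-2-2v$ is exactly the paper's coefficient $-t/v^2-2(1+v)$. The only difference is that you spell out the existence/uniqueness of the positive root via the monotonicity and limiting behavior of $F$, which the paper leaves implicit.
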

\begin{proof}
	Let $u\in\R$. We can compute (computer software such as Mathematica could be helpful) that
	\[\frac{1}{t} = \int\frac{d\mu_{x_0}(x)}{(u-x)^2+v_{x_0,t}(u)^2} = \frac{1+v_{x_0,t}(u)}{v_{x_0,t}(u)(u^2+(1+v_{x_0,t}(u))^2)},\]
	so that
	\[u^2=\frac{1}{v_{x_0,t}(u)}[1+v_{x_0,t}(u)][t-v_{x_0,t}(u)-v_{x_0,t}(u)^2].\]
	This proves \eqref{eq:vEqCauchy}. 
	
	Differentiating both sides of \eqref{eq:vEqCauchy} with respect to $u$, we have
	\begin{equation}
	\label{eq:vDerivative}
	2u = \left(-\frac{t}{v_{x_0,t}(u)^2}-2(1+v_{x_0,t}(u))\right)\frac{dv_{x_0,t}}{du}
	\end{equation}
	which shows $u$ and $dv_{x_0,t}/du$ have oppposite sign since we must have $-t/v_{x_0,t}(u)-2(1+v_{x_0,t}(u))<0$. It follows that $dv_{x_0,t}/du<0$ for $u>0$ and $dv_{x_0,t}/du>0$ for $u<0$, proving $v_{x_0,t}$ is unimodal with peak at $0$. The value $v_{x_0,t}(0)$ comes from solving the equation~\eqref{eq:vEqCauchy} at $u=0$.
	\end{proof}

\begin{proposition}
	\label{eq:Cauchypsi}
	We have
	\[\frac{d\psi_{x_0,t}}{du}=\frac{t+4v^2(1+v)^2}{(1+v)(t+2v^2(1+v))},\]
	where $v = v_{x_0,t}(u)$.
	\end{proposition}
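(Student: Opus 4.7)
The plan is to first reduce $\psi_{x_0,t}$ to an explicit algebraic function of $u$ and $v=v_{x_0,t}(u)$, and then differentiate via the chain rule, using the formula for $dv/du$ already derived in~\eqref{eq:vDerivative}.

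First, I would compute the Cauchy transform of the standard Cauchy distribution explicitly. A contour integral (or residue calculation) gives
\[
G_{x_0}(z)=\int\frac{1}{z-x}\cdot\frac{dx}{\pi(1+x^2)}=\frac{1}{z+i},\quad z\in\C^+.
\]
Therefore $H_{x_0,t}(u+iv)=(u+iv)+t/(u+i(v+1))$. Rationalizing and taking the real part yields
\[
\psi_{x_0,t}(u)=\mathrm{Re}\,H_{x_0,t}(u+iv_{x_0,t}(u))=u+\frac{tu}{u^2+(1+v)^2}
\]
with $v=v_{x_0,t}(u)$. At this point I would invoke the defining relation for $v$ in the form $u^2+(1+v)^2=t(1+v)/v$, which is just a rewriting of~\eqref{eq:vEqCauchy}. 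Substituting this into the expression for $\psi_{x_0,t}$ collapses it to the remarkably clean closed form
\[
\psi_{x_0,t}(u)=u+\frac{uv}{1+v}=\frac{u(1+2v)}{1+v}.
\]

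Next I would differentiate this formula. Viewing $\psi_{x_0,t}=u(1+2v)/(1+v)$ as a function of $(u,v)$ and applying the chain rule gives
\[
\frac{d\psi_{x_0,t}}{du}=\frac{1+2v}{1+v}+\frac{u}{(1+v)^2}\cdot\frac{dv}{du}.
\]
From~\eqref{eq:vDerivative} I have $dv/du=-2uv^2/(t+2v^2(1+v))$, so
\[
\frac{d\psi_{x_0,t}}{du}=\frac{1+2v}{1+v}-\frac{2u^2v^2}{(1+v)^2(t+2v^2(1+v))}.
\]

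The final step is to eliminate $u^2$ using \eqref{eq:vEqCauchy}, giving $u^2v^2/(1+v)^2=v(t-v(1+v))/(1+v)$, and then combine the two terms over the common denominator $(1+v)(t+2v^2(1+v))$. The numerator becomes
\[
(1+2v)(t+2v^2(1+v))-2v(t-v(1+v))=t+4v^2(1+v)+4v^3(1+v)=t+4v^2(1+v)^2,
\]
which yields exactly the claimed identity. The computation is entirely algebraic; the only subtle step is recognizing the closed form for $\psi_{x_0,t}$, which makes the subsequent differentiation manageable—otherwise one would be differentiating an integral with implicit $v$-dependence. I anticipate no genuine obstacle beyond careful bookkeeping in the final simplification.
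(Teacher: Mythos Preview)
Your proof is correct and follows essentially the same route as the paper: both obtain the closed form $\psi_{x_0,t}(u)=u+\dfrac{uv}{1+v}$ by using the relation $u^2+(1+v)^2=t(1+v)/v$, then differentiate via the chain rule with~\eqref{eq:vDerivative} and eliminate $u^2$ using~\eqref{eq:vEqCauchy}. The only cosmetic difference is that you compute $G_{x_0}$ via a residue, whereas the paper evaluates the relevant integral directly.
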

\begin{proof}
	Since
	\[\frac{1+v}{v(u^2+(1+v)^2)}=\int\frac{d\mu_{x_0}(x)}{(u-x)^2+v_{x_0,t}(u)^2}=\frac{1}{t},\]
	we can compute (again, computer software could be useful)
\begin{equation}
\label{eq:psiCauchy}
\psi_{x_0,t}(u) = u+t\int\frac{(u-x)\,d\mu_{x_0}(x)}{(u-x)^2+v_{x_0,t}(u)^2} = u+t\frac{u}{u^2+(1+v)^2} = u+\frac{uv}{1+v}.
\end{equation}
Using~\eqref{eq:vEqCauchy} and~\eqref{eq:vDerivative}, we can compute the derivative
\begin{equation}
\label{eq:ReDer}
\frac{d}{du}\frac{uv}{1+v} = \frac{v}{1+v}+\frac{u}{(1+v)^2}\frac{dv}{du}=\frac{v(-t+2v(1+v)^2)}{(1+v)(t+2v^2(1+v))}
\end{equation}
and so
\[\frac{d\psi_{x_0,t}}{du}=\frac{t+4v^2(1+v)^2}{(1+v)(t+2v^2(1+v))},\]
completing the proof.
\end{proof}

\begin{proof}[Proof of Theorem~\ref{thm:CauchyEx}]
	By Proposition~\ref{prop:LambdatId},$\Lambda_{x_0,t} = \{\left.u+iv\in\C\right\vert |v|<v_{x_0,t}(u)\}$; thus, \eqref{eq:BdryLambdatCauchy} and the description of the boundary of $\Lambda_{x_0,t}$ follow from~\eqref{eq:vEqCauchy}. 
	
	The formula of the density of the Brown measure follows from Corollary~\ref{cor:Circular} and Proposition~\ref{eq:Cauchypsi}. 
	\end{proof}
\subsection{Adding an elliptic variable\label{sect:Cauchyelliptic}}
The main result about the Brown measure of $x_0+c_{\alpha,\beta}$ is as follows.
\begin{theorem}
	\label{thm:CauchyEllipse}
	When $x_0$ is Cauchy distributed, the function $\varphi_{x_0,\alpha,\beta}$ in Definition~\ref{def:ForEllipse}(3) is unimodal with peak $\frac{\beta(-1+\sqrt{1+4s})}{s}$ at $u = 0$. The boundary of the corresponding $\Omega_{x_0,\alpha,\beta}$ (See Definition~\ref{def:ForEllipse}(3)) has the form
	\begin{equation}
	\label{eq:CauchyEllBdry}
	\partial \Omega_{x_0,\alpha,\beta}=\left\{u\pm ib\in\C\left\vert u^2 = \frac{(b\alpha+\beta)^2(4\beta^2-2b\beta-b^2s)}{b\beta^2(bs+2\beta)}\right.\right\}.
	\end{equation}
	The function $b=\varphi_{x_0,\alpha,\beta}(u)$ has a decay of order $1/u^2$ as $|u|\to\infty$.
	
	The density of the Brown measure has the form
	\[w_{\alpha,\beta}(\lambda) = \frac{1}{4\pi \beta}\frac{b^4 s^3+4b^3s^2\beta+4b^2 s\beta^2+4\beta^4}{b^4 s^2\alpha+4b^3s\alpha\beta+4b^2\alpha\beta^2+4b\beta^4+4\beta^4},\quad \lambda\in\Omega_{x_0,\alpha,\beta}\]
	where $b = \varphi_{x_0,\alpha,\beta}(u)$.
	\end{theorem}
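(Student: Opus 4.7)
The plan is to specialize the general machinery of Sections~\ref{sect:BrownSc}--\ref{sect:ellipse} to the Cauchy law, recycling the explicit formulas already produced in the Cauchy--circular subsection (now with $t$ replaced by $s=\alpha+\beta$). Throughout I write $v := v_{x_0,s}(u_0)$ where $u_0 = f_{x_0,\alpha,\beta}^{-1}(u)$. Two computations from the Cauchy--circular case drive everything: first, from~\eqref{eq:vEqCauchy},
\[u_0^2 = \frac{(1+v)(s-v-v^2)}{v};\]
second, from the intermediate step of~\eqref{eq:psiCauchy},
\[\int\frac{(u_0-x)\,d\mu_{x_0}(x)}{(u_0-x)^2+v^2} = \frac{u_0\,v}{s(1+v)}.\]
Combining the latter with Definition~\ref{def:ForEllipse}(2) (equivalently Proposition~\ref{prop:Forellipse}) and using $s+(\alpha-\beta)=2\alpha$ yields the compact form
\[f_{x_0,\alpha,\beta}(u_0) = u_0\cdot\frac{s+2\alpha v}{s(1+v)},\qquad\text{so that}\qquad u_0 = \frac{u\,s(1+v)}{s+2\alpha v}.\]

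For the boundary, I square the last relation, equate with the expression for $u_0^2$, and substitute $v = bs/(2\beta)$ (the defining relation $b = \varphi_{x_0,\alpha,\beta}(u) = 2\beta v/s$). Each factor simplifies cleanly: $s+2\alpha v = s(\alpha b+\beta)/\beta$, $s-v-v^2 = s(4\beta^2-2b\beta-b^2 s)/(4\beta^2)$, and $v(1+v) = bs(bs+2\beta)/(4\beta^2)$. The $s^2$'s, the $4\beta^2$'s, and one power of $s$ cancel to leave exactly~\eqref{eq:CauchyEllBdry}.

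For the density, I feed the integral identity into Theorem~\ref{thm:ellipse} and apply the chain rule $d/du = (f'_{x_0,\alpha,\beta}(u_0))^{-1}\,d/du_0$. Writing
\[N := v(-s+2v(1+v)^2),\qquad D := s(1+v)(s+2v^2(1+v)),\]
the Cauchy--circular derivation of~\eqref{eq:ReDer} with $t=s$ gives $(d/du_0)\bigl(u_0v/(s(1+v))\bigr) = N/D$, while $f'_{x_0,\alpha,\beta}(u_0) = (D+(\alpha-\beta)N)/D$. Hence
\[1 + 2\beta\cdot\frac{N}{D+(\alpha-\beta)N} = \frac{D+sN}{D+(\alpha-\beta)N}.\]
A short expansion, using $s-(\alpha-\beta)=2\beta$ to collect the $sv$ terms and $s+(\alpha-\beta)=2\alpha$ to collect the $v^2(1+v)^2$ terms, yields $D+sN = s(s+4v^2(1+v)^2)$ and $D+(\alpha-\beta)N = s^2+2\beta s v+4\alpha v^2(1+v)^2$. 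Substituting $v=bs/(2\beta)$ and clearing denominators by $4\beta^4$ (after expanding $(2\beta+bs)^2$) produces the claimed density.

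The unimodality and peak of $\varphi_{x_0,\alpha,\beta}$ follow from the unimodality of $v_{x_0,s}$ (Theorem~\ref{thm:CauchyEx} with $t=s$) together with the fact, visible from the closed form for $f_{x_0,\alpha,\beta}$ above, that $f_{x_0,\alpha,\beta}$ is a strictly increasing homeomorphism sending $0$ to $0$; evaluating $v_{x_0,s}(0)=(-1+\sqrt{1+4s})/2$ and then $b = 2\beta v/s$ gives the peak value. For the $1/u^2$ decay, the relation $u_0^2 = (1+v)(s-v-v^2)/v$ forces $v \sim s/u_0^2$ as $|u_0|\to\infty$, while the closed form for $f_{x_0,\alpha,\beta}$ gives $u \sim u_0$ in this regime, so $b = 2\beta v/s \sim 2\beta/u^2$. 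The main obstacle is purely bookkeeping: the identities $s\pm(\alpha-\beta) = 2\beta, 2\alpha$ must be used at the right moments to compress $D+(\alpha-\beta)N$ into the stated form, and the substitution $v=bs/(2\beta)$ requires a careful clearing of denominators to match the stated numerator and denominator exactly.
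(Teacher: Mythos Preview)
Your proposal is correct and follows essentially the same route as the paper: the paper also parametrizes the boundary via $u = f_{x_0,\alpha,\beta}(u_0) = u_0(s+2\alpha v)/(s(1+v))$, squares and substitutes $v = bs/(2\beta)$, and for the density it computes the same derivative (your $N/(D+(\alpha-\beta)N)$ is exactly the content of its Lemma~\ref{lem:dduRe}, obtained via Lemma~\ref{lem:dudu0}) and then substitutes $v = bs/(2\beta)$. The only difference is organizational: the paper isolates the chain-rule computation as two preparatory lemmas, whereas you package the same algebra with the $N,D$ shorthand inline.
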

\begin{remark}
	The density $w_{\alpha,\beta}$ does not approach $0$ as $u=\mathrm{Re}(\lambda)$ approaches infinity. In fact, as $|u|\to\infty$, $\varphi_{x_0,\alpha,\beta}(u)\to 0$ and so
	\[\lim_{|u|\to\infty}w_{\alpha,\beta}(u) = \frac{1}{4\pi \beta}.\]
	\end{remark}
Figure~\ref{fig:CauchyElliptic} plots an eigenvalue simulation of $x_0+c_{\alpha,\beta}$, the density of the Brown measure of $x_0+c_{\alpha,\beta}$, and the graph of the function $w_{\alpha,\beta}(u)$ for $u\in\R$ at $\alpha=1/8$ and $\beta=7/8$.
\begin{figure}[h]
	\begin{center}
		\includegraphics[width=0.6\textwidth]{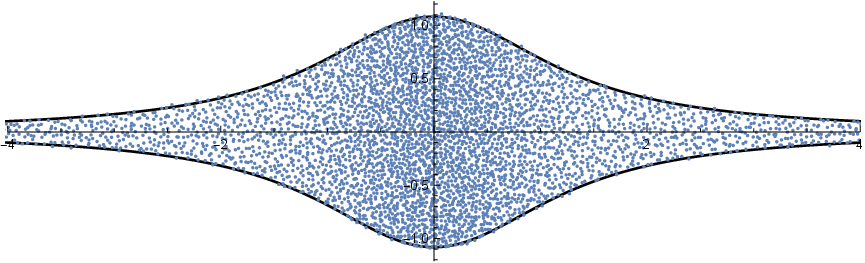}\\[10pt]
		\includegraphics[width=0.6\textwidth]{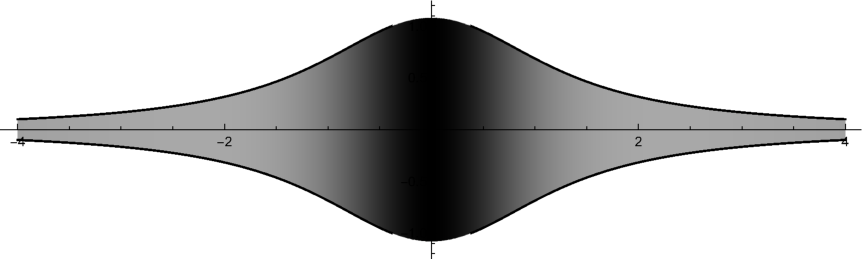}\\[10pt]
		\includegraphics[width=0.6\textwidth]{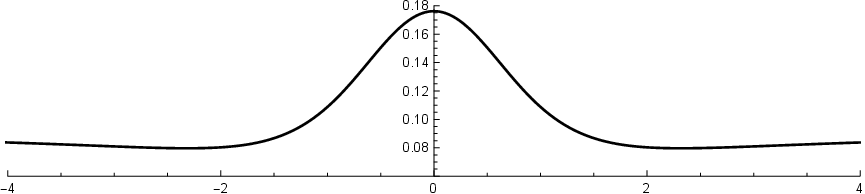}
		\caption{$5000\times 5000$ matrix simulation of eigenvalues of $x_0+c_{\alpha,\beta}$ (top), density plot of the Brown measure of $x_0+c_{\alpha,\beta}$ (middle), and $w_{\alpha,\beta}(u)$ for $u\in\R$ (bottom), at $\alpha=1/8$ and $\beta=7/8$. Top two plotted with boundary of $\Omega_{x_0,\alpha,\beta}$.\label{fig:CauchyElliptic}}
	\end{center}
\end{figure}

We start proving Theorem~\ref{thm:CauchyEllipse} by the following lemma which concerns the derivative of $f_{x_0,\alpha,\beta}$.
\begin{lemma}
	\label{lem:dudu0}
	Let $u = f_{x_0,\alpha,\beta}(u_0)$ for $u_0\in\R$. Then, by writing $s=\alpha+\beta$ and $v=v_{x_0,s}(u_0)$,
	\[\frac{du}{du_0} = \frac{(1+v)(s^2+4\alpha v^2(1+v))-(\alpha-\beta)sv}{s(1+v)(s+2v^2(1+v))}.\]
	\end{lemma}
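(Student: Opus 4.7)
The plan is to derive this identity by combining two ingredients already on the table: the general relation between $f_{x_0,\alpha,\beta}$ and $\psi_{x_0,s}$ that was used in the proof of Proposition~\ref{prop:spacing}, together with the explicit Cauchy formula for $\psi'_{x_0,s}$ from Proposition~\ref{eq:Cauchypsi}. No new analytic input is required; the work is essentially algebraic.

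First I would recall that, directly from Definition~\ref{def:ForEllipse}(2) and the definition~\eqref{eq:IntroPsi} of $\psi_{x_0,s}$,
\[
f_{x_0,\alpha,\beta}(u_0) \;=\; u_0 + (\alpha-\beta)\int\frac{(u_0-x)\,d\mu_{x_0}(x)}{(u_0-x)^2+v_{x_0,s}(u_0)^2}\;=\;u_0+\frac{\alpha-\beta}{s}\bigl(\psi_{x_0,s}(u_0)-u_0\bigr),
\]
which is exactly the identity used in the proof of Proposition~\ref{prop:spacing}. Differentiating in $u_0$ and collecting the constant term gives
\[
\frac{du}{du_0} \;=\; \frac{2\beta}{s} + \frac{\alpha-\beta}{s}\,\psi'_{x_0,s}(u_0).
\]

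Next I would substitute the Cauchy evaluation from Proposition~\ref{eq:Cauchypsi}, namely
\[
\psi'_{x_0,s}(u_0)=\frac{s+4v^2(1+v)^2}{(1+v)(s+2v^2(1+v))},\qquad v=v_{x_0,s}(u_0),
\]
and put the two terms of $du/du_0$ over the common denominator $s(1+v)(s+2v^2(1+v))$. The numerator becomes
\[
2\beta(1+v)\bigl(s+2v^2(1+v)\bigr)+(\alpha-\beta)\bigl(s+4v^2(1+v)^2\bigr),
\]
and expanding this one sees the $(1+v)$-free terms combine to $(\alpha+\beta)s=s^2$, the $v$-linear terms to $2\beta sv$, and the quadratic-in-$v$ terms to $4\alpha v^2(1+v)^2$. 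Thus the numerator equals $s^2+2\beta sv+4\alpha v^2(1+v)^2$, which a routine regrouping rewrites as $(1+v)\bigl(s^2+4\alpha v^2(1+v)\bigr)-(\alpha-\beta)sv$, giving the claimed formula.

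The only mild obstacle is bookkeeping in the algebraic simplification (keeping track of the $\alpha$, $\beta$, $s=\alpha+\beta$ dependencies so that the rewriting into the form displayed in the lemma is transparent). Everything else is a direct chain of substitutions from the previously established formulas, so no new ideas are needed.
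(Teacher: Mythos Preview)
Your proof is correct and takes essentially the same approach as the paper. The paper writes $f_{x_0,\alpha,\beta}(u_0)=u_0+\frac{\alpha-\beta}{s}\frac{u_0v}{1+v}$ and differentiates using the precomputed derivative~\eqref{eq:ReDer}, while you route the same computation through $\psi_{x_0,s}$ and Proposition~\ref{eq:Cauchypsi}; since $\psi_{x_0,s}(u_0)=u_0+\frac{u_0v}{1+v}$, the two calculations are identical up to packaging.
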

\begin{proof}
	Using a computation similar to~\eqref{eq:psiCauchy}, we have
	\begin{equation}
	\label{eq:uu0}
	\begin{split}
	u=f_{x_0,\alpha,\beta}(u_0) &= u_0+(\alpha-\beta)\int\frac{(u_0-x)\,d\mu_{x_0}(x)}{(u_0-x)^2+v^2}\\
	&=u_0+\frac{\alpha-\beta}{s}\frac{u_0v}{1+v}.
	\end{split}
	\end{equation}
	 Then using~\eqref{eq:ReDer} with $s$ in place of $t$ and $u_0$ in place of $u$, we have
	\[\frac{du}{du_0} = 1+\frac{\alpha-\beta}{s}\frac{v(-s+2v(1+v)^2)}{(1+v)(s+2v^2(1+v))}.\]
	The conclusion now follows from an algebraic computation.
	\end{proof}

\begin{lemma}
	\label{lem:dduRe}
	Let $u = f_{x_0,\alpha,\beta}(u_0)$ for $u_0\in\R$. Then, by writing $s=\alpha+\beta$ and $v=v_{x_0,s}(u_0)$, we have
	\[\frac{d}{du}\int\frac{(u_0-x)\,d\mu_{x_0}(x)}{(u_0-x)^2+v} = \frac{v(-s+2v(1+v)^2)}{(1+v)(s^2+4\alpha v^2(1+v))-(\alpha-\beta)sv}.\]
	\end{lemma}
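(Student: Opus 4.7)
The plan is to reduce the computation to the chain rule together with the two explicit formulas already established for the Cauchy case. Writing $v = v_{x_0,s}(u_0)$ throughout, the idea is that the integral $I(u_0) := \int\frac{(u_0-x)\,d\mu_{x_0}(x)}{(u_0-x)^2+v^2}$ is a function of $u_0$, and we are asked for its derivative with respect to $u = f_{x_0,\alpha,\beta}(u_0)$. By the chain rule,
\[
\frac{dI}{du} = \frac{dI}{du_0}\Big/\frac{du}{du_0},
\]
so both ingredients can be extracted from formulas already on the page.

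First, I would note that the Cauchy identity used to derive \eqref{eq:psiCauchy} (with $s$ in place of $t$) gives
\[
I(u_0) \;=\; \int\frac{(u_0-x)\,d\mu_{x_0}(x)}{(u_0-x)^2+v^2} \;=\; \frac{u_0}{u_0^2+(1+v)^2} \;=\; \frac{1}{s}\cdot\frac{u_0 v}{1+v},
\]
where the last equality uses $\tfrac{1+v}{v(u_0^2+(1+v)^2)} = 1/s$ from the defining equation of $v_{x_0,s}$. Differentiating $\tfrac{u_0 v}{1+v}$ with respect to $u_0$ has already been carried out in \eqref{eq:ReDer} (with $t$ replaced by $s$, $u$ by $u_0$), giving
\[
\frac{dI}{du_0} \;=\; \frac{1}{s}\cdot\frac{v\bigl(-s + 2v(1+v)^2\bigr)}{(1+v)\bigl(s + 2v^2(1+v)\bigr)}.
\]

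Next, I would invoke Lemma~\ref{lem:dudu0} directly to write $du/du_0$ in closed form, whose reciprocal is
\[
\frac{du_0}{du} \;=\; \frac{s(1+v)\bigl(s + 2v^2(1+v)\bigr)}{(1+v)\bigl(s^2 + 4\alpha v^2(1+v)\bigr) - (\alpha-\beta)sv}.
\]
Multiplying these two expressions, the factors $s$, $(1+v)$, and $\bigl(s+2v^2(1+v)\bigr)$ cancel, leaving precisely the claimed
\[
\frac{dI}{du} \;=\; \frac{v\bigl(-s + 2v(1+v)^2\bigr)}{(1+v)\bigl(s^2 + 4\alpha v^2(1+v)\bigr) - (\alpha-\beta)sv}.
\]

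There is no real obstacle here: the Cauchy distribution makes the Poisson-type integral explicit, and the chain rule pulls $du_0/du$ from Lemma~\ref{lem:dudu0}. The only mild bookkeeping item is making sure the rewriting via $u_0^2+(1+v)^2 = s(1+v)/v$ is applied consistently, and that we are matching the conventions (note that the $v$ in the denominator of the integrand in the statement is meant to be $v^2 = v_{x_0,s}(u_0)^2$, continuing the notation $v = v_{x_0,s}(u_0)$ from earlier lemmas).
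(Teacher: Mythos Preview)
Your proposal is correct and follows essentially the same approach as the paper: both compute the integral explicitly as $\tfrac{1}{s}\tfrac{u_0 v}{1+v}$ via the Cauchy identity, differentiate with respect to $u_0$ using \eqref{eq:ReDer}, and then apply the chain rule with $du_0/du$ taken from Lemma~\ref{lem:dudu0}. Your observation that the $v$ in the integrand's denominator should read $v^2$ is also correct.
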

\begin{proof}
	Using a computation similar to~\eqref{eq:psiCauchy}, we have
	\[\int\frac{(u_0-x)\,d\mu_{x_0}(x)}{(u_0-x)^2+v} = \frac{1}{s}\frac{u_0v}{1+v}.\]
	Thus, by a computation similar to~\eqref{eq:ReDer} and Lemma~\ref{lem:dudu0},
	\begin{align*}
	\frac{d}{du}\int\frac{(u_0-x)\,d\mu_{x_0}(x)}{(u_0-x)^2+v}& = \frac{1}{s}\frac{v(-s+2v(1+v)^2)}{(1+v)(s+2v^2(1+v))}\frac{du_0}{du}\\
	&= \frac{v(-s+2v(1+v)^2)}{(1+v)(s^2+4\alpha v^2(1+v))-(\alpha-\beta)sv},
	\end{align*}
	completing the proof.
	\end{proof}
\begin{proof}[Proof of Theorem~\ref{thm:CauchyEllipse}]
	First, the boundary of $\Omega_{x_0,\alpha,\beta}$ is the image of the graph 
	\[\{(u_0, v_{x_0,t}(u_0))\vert\,u_0\in\R\}\]
	 under the map $U_{\alpha,\beta}$ (See~\eqref{eq:ellipticU}). By~\eqref{eq:vEqCauchy} and~\eqref{eq:uu0}, if we write $u=f_{x_0,\alpha\beta}(u_0)$, then
	\[u^2 = \frac{(1+v)(s-v-v^2)}{v}\left(1+\frac{\alpha-\beta}{s}\frac{v}{1+v}\right)^2=\frac{1}{s^2}\frac{s-v-v^2}{v}\frac{(s+2\alpha v)^2}{1+v}\]
	where $v = v_{x_0,s}(u_0)$. The claimed formula of $\partial\Omega_{x_0,\alpha,\beta}$ follows from the fact that $b = \varphi_{x_0,\alpha,\beta}(u) = 2\beta v/s$. It is clear that $\varphi_{x_0,\alpha,\beta}(u)$ has a decay of order $1/u^2$ as $|u|\to\infty$, because $v\to0$ as $|u|\to\infty$.
	
	Recall from Theorem~\ref{thm:CauchyEx} that $v_{x_0,s}(0) = \frac{-1+\sqrt{1+4s}}{2}$.  Since $f_{x_0,\alpha,\beta}(0) = 0$ by~\eqref{eq:uu0} and $f_{x_0,\alpha,\beta}$ is strictly increasing, the function
	\[\varphi_{x_0,\alpha,\beta}(u) = \frac{2\beta v_{x_0,s}(u_0)}{s}\]
	is unimodal with peak $\frac{\beta(-1+\sqrt{1+4s})}{s}$ at $0$.
	
	By Theorem~\ref{thm:ellipse} and Lemma~\ref{lem:dduRe}, we have
		\begin{align*}
		w_{\alpha,\beta}(u+iv) &= \frac{1}{4\pi \beta}\left(1+2\beta\frac{d}{du}\int\frac{(u_0-x)\,d\mu_{x_0}(x)}{(u_0-x)^2+v_{x_0,s}(u_0)^2}\right)\\
		&= \frac{1}{4\pi\beta}\left(1+2\beta\frac{v(-s+2v(1+v)^2)}{(1+v)(s^2+4\alpha v^2(1+v))-(\alpha-\beta)sv}\right)\\
		&= \frac{1}{4\pi \beta}\frac{b^4 s^3+4b^3s^2\beta+4b^2 s\beta^2+4\beta^4}{b^4 s^2\alpha+4b^3s\alpha\beta+4b^2\alpha\beta^2+4b\beta^4+4\beta^4}
		\end{align*}
	where we have used $b = \varphi_{x_0,\alpha,\beta}(u) = (2\beta/s)v$.
	\end{proof}

We close this section by an example illustrating how mass is transformed by the push-forward under $U_{\alpha,\beta}$ (See~\eqref{eq:ellipticU}), as proved in Proposition~\ref{prop:spacing}. By Proposition~\ref{prop:spacing} and Theorem~\ref{thm:CauchyEx},
\[f_{x_0,\alpha,\beta}''(u_0)=2\pi(\alpha-\beta)w_{s/2,s/2}'(u_0)=(\alpha-\beta)\frac{4v(1+v)(1+3v(1+v))-s}{ s(1+v)^2(1+2v^2(1+v))^2}\frac{dv}{du_0}\]
where $v=v_{x_0,s}(u_0)$. Recall that $v_{x_0,s}(u_0)$ is unimodal, $dv/du_0=0$ if and only if $u_0=0$. As an example, when $s=1$, we can solve, by the relation of $u_0$ and $v_{x_0,s}(u_0)$ in Lemma~\ref{eq:vEqCauchy}, that $f_{x_0,\alpha,\beta}''(u_0) = 0$ if and only if $u_0 = 0$ or $u_0=\pm\frac{-3+\sqrt{15}}{6}\approx \pm 2.56141$.

The top diagram of Figure~\ref{fig:Spacing} shows vertical blue line segments inside $\Lambda_{x_0,1}$. The spacing between the blue line segments is $0.25$. The middle diagram of Figure~\ref{fig:Spacing} shows the corresponding vertical blue line segments after being mapped by $U_{1/8,7/8}$: each blue line segment intersecting $u_0\in\R$ in the top diagram is mapped to a vertical line segment intersecting $f_{x_0,1/8,7/8}(u_0)$ in the middle diagram. The bottom diagram plots the differences
\[[f_{x_0,1/8,7/8}(u_0+0.05)-f_{x_0,1/8,7/8}(u_0)]-[ f_{x_0,1/8,7/8}(u_0)-f_{x_0,1/8,7/8}(u_0-0.05)]\]
for $u_0 =0.05 k$ for $k = 0,1,\ldots,120$. Figure~\ref{fig:Spacing} agrees with the theoretical computation in the preceding paragraph. The spacings between the image of the vertical line segments with real part less than or equal $2.5$ in the top diagram are increasing, as shown in the middle diagram, whereas the spacings between the image of the vertical line segments with real part greater than $2.5$ in the top diagram are decreasing, as shown in the middle diagram. The bottom diagram also shows a sign change at a value slightly greater than $2.5$.
\begin{figure}
	\begin{center}
		\includegraphics[width=0.6\textwidth]{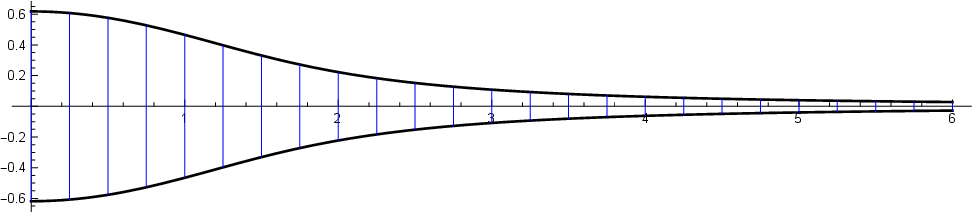} \includegraphics[width=0.6\textwidth]{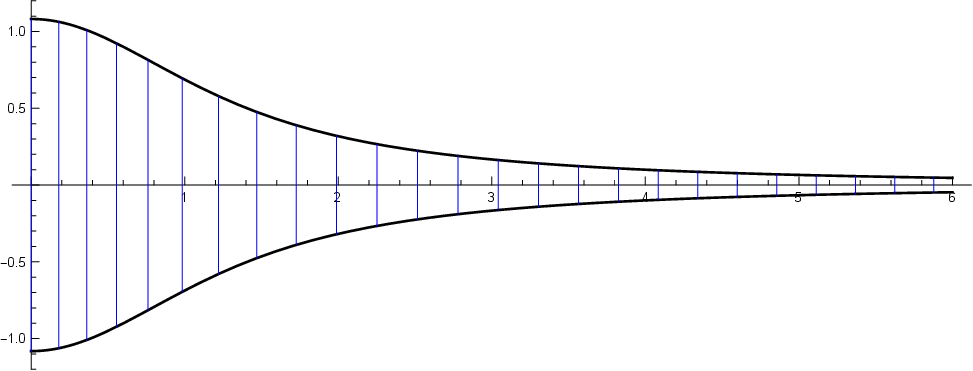}
	\includegraphics[width=0.6\textwidth]{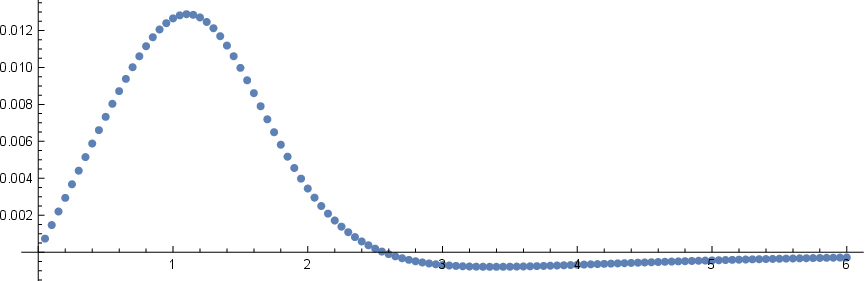}
	\caption{Equally-spaced blue lines inside $\Lambda_{x_0,1}$ (Top). The corresponding blue lines under the map $U_{1/8,7/8}$ (Middle). Differences between the lines: $f_{x_0,1/8,7/8}(u_0+0.05)-f_{x_0,7/8,7/8}(u_0)$ and $f_{x_0,1/8,7/8}(u_0)-f_{x_0,1/8,7/8}(u_0-0.05)$ (Bottom).\label{fig:Spacing}}
	\end{center}
	\end{figure}

\subsection{Adding an imaginary multiple of semicircular variable}
In this section, we take $\alpha=0$ and $\beta = t$ in Theorem~\ref{thm:CauchyEllipse} to obtain the Brown measure of $x_0+i\sigma_t$. One significant simplification over the previous cases is that the function $\varphi_{x_0,t}$, whose graph is the (upper) boundary of $\Omega_{x_0,t}$, and the density $w_t(u+iv)$ can be written explicitly in terms of $u$, instead of $v_{x_0,t}(u)$.
\begin{theorem}
	\label{thm:CauchySc}
	When $x_0$ has the Cauchy distribution, the domain $\Omega_{x_0,t}$ has the form
	\[\Omega_{x_0,t} = \left\{u+iv\in\C\left\vert |v|\leq \frac{4t}{\sqrt{u^2+1}(\sqrt{u^2+1}+\sqrt{u^2+1+4t})}\right.\right\}.\]
	The density of the Brown measure of $x_0+i\sigma_t$ is given by
	\[w_{t}(\lambda)=\frac{1}{4\pi t}\frac{4t+(1+u^2)^2}{(1+u^2)^{3/2}\sqrt{u^2+1+4t}},\quad \lambda=u+iv\in\Omega_{x_0,t}.\]
	\end{theorem}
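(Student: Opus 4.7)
The plan is to specialize Theorem~\ref{thm:CauchyEllipse} to the case $\alpha=0$, $\beta=t$, $s=\alpha+\beta=t$, and then to extract the explicit description of the boundary and the density in terms of $u$ alone (rather than in terms of the implicit function $\varphi_{x_0,t}(u)$). There is no new conceptual input needed; the work is purely algebraic, with the only subtlety being a careful use of a quadratic identity to eliminate $b=\varphi_{x_0,t}(u)$ from both the boundary equation and the density formula.

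First I would substitute $\alpha=0$, $\beta=t$, $s=t$ into the boundary formula~\eqref{eq:CauchyEllBdry}. The numerator simplifies to $t^2(4t^2-2bt-b^2t)=t^3(4t-2b-b^2)$ and the denominator to $bt^2\cdot t(b+2)=bt^3(b+2)$, so
\[
u^2 \;=\;\frac{4t-2b-b^2}{b(b+2)}.
\]
Clearing denominators yields the crucial identity
\[
b(b+2)(u^2+1)\;=\;4t,
\]
which I would record separately since it controls the rest of the proof. Solving the resulting quadratic $(u^2+1)b^2+2(u^2+1)b-4t=0$ for the positive root gives
\[
b \;=\;\frac{\sqrt{u^2+1+4t}}{\sqrt{u^2+1}}-1\;=\;\frac{\sqrt{u^2+1+4t}-\sqrt{u^2+1}}{\sqrt{u^2+1}},
\]
and rationalizing the numerator via $\sqrt{A+4t}-\sqrt{A}=4t/(\sqrt{A+4t}+\sqrt{A})$ with $A=u^2+1$ produces exactly the claimed closed form for the upper boundary of $\Omega_{x_0,t}$. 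By symmetry in $v\mapsto -v$ (inherited from Theorem~\ref{thm:ellipse}), the full description of $\Omega_{x_0,t}$ follows.

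Next I would compute the density from Theorem~\ref{thm:CauchyEllipse}. With $\alpha=0$, the denominator of the density expression collapses dramatically to $4\beta^4(b+1)=4t^4(b+1)$, while the numerator factors as $t^3\bigl(b^2(b+2)^2\bigr)+4t^4$. Applying the identity $b(b+2)=4t/(u^2+1)$ derived above, I get $b^2(b+2)^2=16t^2/(u^2+1)^2$, so the numerator becomes $4t\bigl(4t+(u^2+1)^2\bigr)/(u^2+1)^2$. For the denominator I use $b+1=\sqrt{u^2+1+4t}/\sqrt{u^2+1}$, giving
\[
w_t(\lambda) \;=\; \frac{1}{4\pi t}\cdot\frac{4t\bigl(4t+(u^2+1)^2\bigr)/(u^2+1)^2}{4t^3\,\sqrt{u^2+1+4t}/\sqrt{u^2+1}}
\;=\;\frac{4t+(1+u^2)^2}{4\pi t\,(1+u^2)^{3/2}\sqrt{u^2+1+4t}},
\]
as claimed. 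The main obstacle, such as it is, is bookkeeping: one must not confuse $v=\mathrm{Im}\,\lambda$ (which does not appear in the density, consistent with the general fact that $w_t$ is constant along vertical segments of $\Omega_{x_0,t}$) with the auxiliary quantity $v=v_{x_0,s}(u_0)$ used in Theorem~\ref{thm:CauchyEllipse}, and one must carefully carry the identity $b(b+2)(u^2+1)=4t$ through to eliminate $b$ in favor of $u$.
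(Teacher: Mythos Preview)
Your proposal is correct and follows essentially the same route as the paper: specialize Theorem~\ref{thm:CauchyEllipse} to $\alpha=0$, $\beta=t$, reduce the boundary equation to the quadratic $b(b+2)(u^2+1)=4t$, solve for the positive root, and then substitute back into the density formula (the paper isolates the boundary computation as a separate lemma but the algebra is identical). One minor bookkeeping slip: in your displayed fraction the numerator should carry a factor $4t^4$ (not $4t$) and the denominator $4t^4$ (not $4t^3$), so that the powers of $t$ cancel cleanly to give the stated result; your final expression is nonetheless correct.
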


\begin{lemma}
	\label{lem:Cauchyphi}
	When $x_0$ has the Cauchy distribution, the function $\varphi_{x_0,t}$ defined in Theorem~\ref{thm:domain}(4) can be computed as 
		\[\varphi_{x_0,t}(u) =  \frac{4t}{\sqrt{u^2+1}(\sqrt{u^2+1}+\sqrt{u^2+1+4t})},\quad u\in\R.\]
	\end{lemma}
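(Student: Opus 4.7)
The plan is to unwind the definitions in the special case $\alpha=0$, $\beta=t$ and reduce to a simple quadratic equation in $v = v_{x_0,t}(u_0)$ where $u_0 = f_{x_0,0,t}^{-1}(u)$. By Definition~\ref{def:ForEllipse}(3), $\varphi_{x_0,0,t}(u) = 2t v_{x_0,t}(u_0)/t = 2v$, so the task is to compute $v$ explicitly in terms of $u$.

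First, I would use the explicit formula~\eqref{eq:uu0} with $\alpha=0$, $\beta=t$, $s=t$, which gives
\[u = f_{x_0,0,t}(u_0) = u_0 - \frac{u_0 v}{1+v} = \frac{u_0}{1+v},\]
so $u_0 = u(1+v)$. Next, I would substitute this into~\eqref{eq:vEqCauchy}, rewritten as
\[u_0^2 \, v = (1+v)\bigl(t - v(1+v)\bigr).\]
Replacing $u_0^2$ by $u^2(1+v)^2$, dividing by the common factor $1+v$ and collecting terms leads to the clean identity
\[(u^2+1)\,v(1+v) = t,\]
i.e.\ the quadratic $v^2 + v - t/(u^2+1) = 0$.

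Solving this quadratic (taking the positive root since $v\geq 0$) yields
\[v = \frac{-1 + \sqrt{1 + 4t/(u^2+1)}}{2} = \frac{\sqrt{u^2+1+4t} - \sqrt{u^2+1}}{2\sqrt{u^2+1}}.\]
Multiplying by $2$ gives $\varphi_{x_0,t}(u) = (\sqrt{u^2+1+4t} - \sqrt{u^2+1})/\sqrt{u^2+1}$; rationalizing the numerator via
\[\sqrt{u^2+1+4t}-\sqrt{u^2+1} = \frac{4t}{\sqrt{u^2+1+4t}+\sqrt{u^2+1}}\]
produces the claimed formula. There is no real obstacle here: once the relation $u_0 = u(1+v)$ from~\eqref{eq:uu0} is combined with the Cauchy-specific equation~\eqref{eq:vEqCauchy}, everything reduces to a single quadratic, and the only mild subtlety is confirming the sign choice of the square root, which is forced by $v \geq 0$ and $v \to 0$ as $|u|\to\infty$ (consistent with Theorem~\ref{thm:domain}(4)).
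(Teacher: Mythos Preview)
Your proof is correct and is essentially the same argument as the paper's. The paper specializes the general boundary formula~\eqref{eq:CauchyEllBdry} at $\alpha=0$, $\beta=t$ to obtain $u^2=(4t-2b-b^2)/(b(b+2))$ with $b=\varphi_{x_0,t}(u)$ and then solves for $b$; you instead go directly back to~\eqref{eq:uu0} and~\eqref{eq:vEqCauchy}, but after the substitution $b=2v$ your identity $(u^2+1)v(1+v)=t$ is the very same quadratic, so the two computations coincide.
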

\begin{proof}
	By putting $\alpha = 0$ and $\beta=t$ in \eqref{eq:CauchyEllBdry}, we get
	\[u^2 = \frac{t^2(4t^2-2bt-b^2t)}{bt^2(bt+2t)} =  \frac{(4t-2b-b^2)}{b(b+2)}\] 
	where $b = \varphi_{x_0,0,t}(u) = \varphi_{x_0,t}$, as indicated in Definition~\ref{def:ForEllipse}. Thus, since $\varphi_{x_0,t}(u)>0$, we can solve the above equation and get
	\[\varphi_{x_0,t}(u) = \frac{-\sqrt{u^2+1}+\sqrt{u^2+1+4t}}{\sqrt{u^2+1}} = \frac{4t}{\sqrt{u^2+1}(\sqrt{u^2+1}+\sqrt{u^2+1+4t})},\]
	completing the proof.
	\end{proof}

\begin{proof}[Proof of Theorem~\ref{thm:CauchySc}]
	The formula for $\Omega_{x_0,t}$ follows directly from Lemma~\ref{lem:Cauchyphi}. To compute the density of the Brown measure, by Remark~\ref{rem:EllipseToSc}, we again put $\alpha = 0$ and $\beta = t$ in Theorem~\ref{thm:CauchyEllipse}, and get
	\[w_{t}(\lambda) = \frac{1}{4\pi t}\frac{4t^4+tb^2(bt+2t)^2}{4t^3(bt+t)}= \frac{1}{4\pi t}\frac{4t+b^2(b+2)^2}{4t(b+1)},\quad\lambda\in\Omega_{x_0,t}.\]
	where $b = \varphi_{x_0,t}(u)$. By putting in the formula of $\varphi_{x_0,t}(u)$ in Lemma~\ref{lem:Cauchyphi}, we have
	\[w_{t}(\lambda)=\frac{1}{4\pi t}\frac{4t+(1+u^2)^2}{(1+u^2)^{3/2}\sqrt{u^2+1+4t}},\quad \lambda=u+iv\in\Omega_{x_0,t}.\]
	The theorem is established.
	\end{proof}

In the case $x_0+i\sigma_t$, the density $w_t(\lambda)$ is very explicit, in terms of $u=\mathrm{Re}(\lambda)$. It is not hard to see that
\[\lim_{|u|\to\infty}w_t(u) = \frac{1}{4\pi t}.\] 

Recall that Figure~\ref{fig:Cauchy} plots an eigenvalue simulation of $x_0+i\sigma_t$, the density of the Brown measure of $x_0+i\sigma_t$ and the function $w_{t}(u)$ for $u\in\R$ at $t=1$. Figure~\ref{fig:CauchyCompare} shows the plots of the Brown measure densities of $x_0+c_t$ ($t=1$), $x_0+c_{\alpha,\beta}$ ($\alpha = 1/8$, $\beta = 7/8$), and $x_0++i\sigma_t$ ($t=1$). It also shows the graphs of $w_{\alpha,\beta}(u)$, $u\in\R$, for comparison. Note that although we observe the trend from Figure~\ref{fig:Cauchy} that $w_{\alpha,\beta}(0)$ decreases as $\beta$ increases while keeping $\alpha+\beta = 1$, we do not lose mass around vertical strips around the imaginary axis. The map $U_{\alpha,\beta}$ indeed pushes mass towards the imaginary axis in a vertical strip including the origin, by the discussion in the last paragraph of Section~\ref{sect:Cauchyelliptic}.

\begin{figure}[h]
	\begin{center}
		\includegraphics[width=0.45\textwidth]{CauchyCircularDensity}\qquad\includegraphics[width=0.45\textwidth]{CauchyCircularW}\\[10pt]
		\includegraphics[width=0.45\textwidth]{CauchyEllipticDensity}\qquad\includegraphics[width=0.45\textwidth]{CauchyEllipticW}\\[10pt]
		\includegraphics[width=0.45\textwidth]{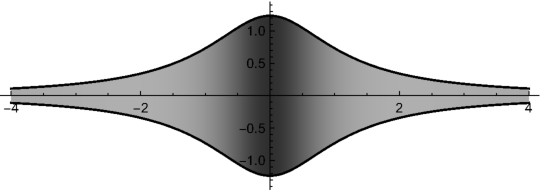}\qquad\includegraphics[width=0.45\textwidth]{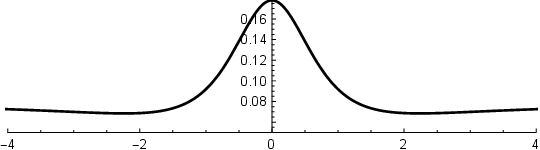}\\[10pt]
		\caption{Left: Brown measure densities of $x_0+c_1$ (top), $x_0+c_{1/8,7/8}$ (middle), and $x_0+i\sigma_1$ (bottom). Right: Graphs of $w_{1/2,1/2}$ (top), $w_{1/8,7/8}$ (middle), and $w_1$ (bottom).\label{fig:CauchyCompare}}
	\end{center}
\end{figure}

\section{Acknowledgments}
The author would like to thank Hari Bercovici and Roland Speicher who asked the author in two different seminars whether one can extend the results in \cite{HallHo2020, Ho2020} to unbounded random variables. Their question was the starting point of this paper, and the author had useful discussions with them. The author would like to express his special thank to Hari Bercovici and Brian Hall for extra discussions and reading the updated version of the manuscript which eliminates a gap in the first version of the manuscript. The author would also like to thank Marek Bo\.{z}ejko, Eugene Lytvynov for useful conversations. The author also thanks Hall for helping computer simulations and plotting graphs.

\bibliographystyle{acm}
\bibliography{UnboundedSum}

\begin{thebibliography}{10}

\bibitem{BelinschiBercovici2007}
{\sc Belinschi, S.~T., and Bercovici, H.}
\newblock A new approach to subordination results in free probability.
\newblock {\em J. Anal. Math. 101\/} (2007), 357--365.

\bibitem{BercoviciVoiculescu1993}
{\sc Bercovici, H., and Voiculescu, D.}
\newblock Free convolution of measures with unbounded support.
\newblock {\em Indiana Univ. Math. J. 42}, 3 (1993), 733--773.

\bibitem{Biane1997sc}
{\sc Biane, P.}
\newblock On the free convolution with a semi-circular distribution.
\newblock {\em Indiana Univ. Math. J. 46}, 3 (1997), 705--718.

\bibitem{Biane1998}
{\sc Biane, P.}
\newblock Processes with free increments.
\newblock {\em Math. Z. 227}, 1 (1998), 143--174.

\bibitem{BianeLehner2001}
{\sc Biane, P., and Lehner, F.}
\newblock Computation of some examples of {B}rown's spectral measure in free
  probability.
\newblock {\em Colloq. Math. 90}, 2 (2001), 181--211.

\bibitem{BianeSpeicher1998}
{\sc Biane, P., and Speicher, R.}
\newblock Stochastic calculus with respect to free {B}rownian motion and
  analysis on {W}igner space.
\newblock {\em Probab. Theory Related Fields 112}, 3 (1998), 373--409.

\bibitem{Brown1986}
{\sc Brown, L.~G.}
\newblock Lidskii's theorem in the type {II} case.
\newblock In {\em Geometric methods in operator algebras ({K}yoto, 1983)},
  vol.~123 of {\em Pitman Res. Notes Math. Ser.} Longman Sci. Tech., Harlow,
  1986, pp.~1--35.

\bibitem{DHK2019}
{\sc Driver, B.~K., Hall, B.~C., and Kemp, T.}
\newblock The {B}rown measure of the free multiplicative {B}rownian motion.
\newblock {\em arXiv:1903.11015\/} (2019).

\bibitem{EvansBook}
{\sc Evans, L.~C.}
\newblock {\em Partial {D}ifferential {E}quations}.
\newblock {V}ol 19, Graduate Studies in Mathematics. American Mathematical
  Society, Providence, RI, 1998.

\bibitem{FugledeKadison1952}
{\sc Fuglede, B., and Kadison, R.~V.}
\newblock Determinant theory in finite factors.
\newblock {\em Ann. of Math. (2) 55\/} (1952), 520--530.

\bibitem{Girko1985}
{\sc Girko, V.~L.}
\newblock The elliptic law.
\newblock {\em Teor. Veroyatnost. i Primenen. 30}, 4 (1985), 640--651.

\bibitem{HaagerupSchultz2007}
{\sc Haagerup, U., and Schultz, H.}
\newblock Brown measures of unbounded operators affiliated with a finite von
  {N}eumann algebra.
\newblock {\em Math. Scand. 100}, 2 (2007), 209--263.

\bibitem{HallHo2020}
{\sc Hall, B.~C., and Ho, C.-W.}
\newblock The {B}rown measure of the sum of self-adjoint element and an
  imaginary multiple of a semicircular element.
\newblock {\em arXiv:2006.07168\/} (2020).

\bibitem{HallHo2021}
{\sc Hall, B.~C., and Ho, C.-W.}
\newblock The {B}rown measure of a family of free multiplicative {B}rownian
  motions.
\newblock {\em arXiv:2104.07859\/} (2021).

\bibitem{Ho2016}
{\sc Ho, C.-W.}
\newblock The two-parameter free unitary {S}egal-{B}argmann transform and its
  {B}iane-{G}ross-{M}alliavin identification.
\newblock {\em J. Funct. Anal. 271}, 12 (2016), 3765--3817.

\bibitem{Ho2020}
{\sc Ho, C.-W.}
\newblock The {B}rown measure of the sum of a self-adjoint element and an
  elliptic element.
\newblock {\em arXiv:2007.06100\/} (2020).

\bibitem{HoZhong2019}
{\sc Ho, C.-W., and Zhong, P.}
\newblock {B}rown measures of free circular and multiplicative {B}rownian
  motions with self-adjoint and unitary initial conditions.
\newblock {\em arXiv:1908.08150\/} (2019).

\bibitem{KummererSpeicher1992}
{\sc K\"{u}mmerer, B., and Speicher, R.}
\newblock Stochastic integration on the {C}untz algebra {$O_\infty$}.
\newblock {\em J. Funct. Anal. 103}, 2 (1992), 372--408.

\bibitem{Maassen1992}
{\sc Maassen, H.}
\newblock Addition of freely independent random variables.
\newblock {\em J. Funct. Anal. 106}, 2 (1992), 409--438.

\bibitem{NicaSpeicherBook}
{\sc Nica, A., and Speicher, R.}
\newblock {\em Lectures on the combinatorics of free probability}, vol.~335 of
  {\em London Mathematical Society Lecture Note Series}.
\newblock Cambridge University Press, Cambridge, 2006.

\bibitem{Nikitopoulos2021}
{\sc Nikitopoulos, E.~A.}
\newblock It\^{o}'s formula for noncommutative ${C}^2$ functions of free
  it\^{o} processes with respect to circular {B}rownian motion.
\newblock {\em arXiv:2011.08493\/} (2021).

\bibitem{Sniady2002}
{\sc \'{S}niady, P.}
\newblock Random regularization of {B}rown spectral measure.
\newblock {\em J. Funct. Anal. 193}, 2 (2002), 291--313.

\bibitem{Voiculescu1986}
{\sc Voiculescu, D.}
\newblock Addition of certain non-commuting random variables.
\newblock {\em Journal of Functional Analysis 66}, 3 (1986), 323 -- 346.

\bibitem{Voiculescu1991}
{\sc Voiculescu, D.}
\newblock Limit laws for random matrices and free products.
\newblock {\em Invent. Math. 104}, 1 (1991), 201--220.

\bibitem{Voiculescu1993}
{\sc Voiculescu, D.}
\newblock The analogues of entropy and of {F}isher's information measure in
  free probability theory. {I}.
\newblock {\em Comm. Math. Phys. 155}, 1 (1993), 71--92.

\bibitem{Voiculescu2000}
{\sc Voiculescu, D.}
\newblock The coalgebra of the free difference quotient and free probability.
\newblock {\em Internat. Math. Res. Notices}, 2 (2000), 79--106.

\end{thebibliography}

\end{document}